\newcommand{\preprint}[1]{\iftoggle{preprint}{#1}{}}
\newcommand{\infor}[1]{\iftoggle{preprint}{}{#1}}
\def\[#1\]{\begin{equation}\begin{aligned}#1\end{aligned}\end{equation}}
\def\*[#1\]{\begin{equation*}\begin{aligned}#1\end{aligned}\end{equation*}}
    \let\Cref\crtCref
    \let\cref\crtcref
\crefname{subsection}{Section}{Sections}
\crefname{lemma}{Lemma}{Lemmas}
\crefname{corollary}{Corollary}{Corollaries}
\crefname{theorem}{Theorem}{Theorems}
\crefname{definition}{Definition}{Definitions}
\crefname{assumption}{Assumption}{Assumptions}
\crefname{supplement}{Supplement}{Supplements}
\crefname{algorithm}{Algorithm}{Algorithms}
\newcommand{\N}{\mathbb{N}}
\newcommand{\Z}{\mathbb{Z}}
\newcommand{\R}{\mathbb{R}}
\newcommand{\1}{\mathds{1}}
\newcommand{\dee}{\mathrm{d}}
\DeclareMathOperator{\EE}{\mathbb{E}}
\DeclareMathOperator{\PP}{\mathbb{P}}
\DeclareMathOperator{\argmin}{\arg\min}
\preprint{%

\usepackage{authblk}
\usepackage[letterpaper, margin=1in]{geometry}

\setlength{\parindent}{2em}
\setlength{\parskip}{0pt}

\usepackage{fancyhdr}
\fancyhf{}
\pagestyle{fancy}

\fancyheadoffset{0pt}
\cfoot{\thepage}

\makeatletter
\renewcommand{\maketag@@@}[1]{\hbox{\m@th\normalsize\normalfont#1}}%
\makeatother

\makeatletter
\let\reftagform@=\tagform@
\def\tagform@#1{\maketag@@@{\ignorespaces\textcolor{gray}{(#1)}\unskip\@@italiccorr}}
\renewcommand{\eqref}[1]{\textup{\reftagform@{\ref{#1}}}}
\makeatother

\usepackage{amsthm}

\usepackage{thmtools}

\declaretheorem[name=Theorem]{theorem}

\declaretheorem[name=Lemma]{lemma}
\declaretheorem[name=Proposition]{proposition}
\declaretheorem[name=Corollary]{corollary}

\declaretheorem[name=Definition]{definition}

\declaretheorem[name=Assumption, numbered=no]{assumption*}
\declaretheorem[qed=$\triangleleft$,name=Example]{example}
\declaretheorem[qed=$\triangleleft$,name=Remark]{remark}

\bibpunct[, ]{(}{)}{;}{a}{}{,}
\usepackage{tikz}

\usepackage{titlesec}
	\titlelabel{\thetitle.\quad}
	\titleformat*{\section}{\large\bfseries}
	\titleformat*{\subsection}{\bfseries}
	\titleformat*{\subsubsection}{\itshape}

\titlespacing*\section{0pt}{12pt plus 4pt minus 2pt}{4pt plus 2pt minus 2pt}
\titlespacing*\subsection{0pt}{12pt plus 4pt minus 2pt}{4pt plus 2pt minus 2pt}
\titlespacing*\subsubsection{0pt}{12pt plus 4pt minus 2pt}{4pt plus 2pt minus 2pt}
}
\theoremstyle{plain}%
\newtheorem{theorem}{Theorem}[section]
\newtheorem{lemma}[theorem]{Lemma}
\theoremstyle{definition}
\theoremstyle{remark}
\newtheorem{remark}{Remark}
\DeclareRobustCommand\full  {\tikz[baseline=-0.6ex]\draw[thick] (0,0)--(0.5,0);}
\DeclareRobustCommand\dotted{\tikz[baseline=-0.6ex]\draw[thick,dotted] (0,0)--(0.54,0);}
\DeclareRobustCommand\dashed{\tikz[baseline=-0.6ex]\draw[thick,dashed] (0,0)--(0.54,0);}
\newcommand{\apqname}{APQ}
\newcommand{\npqname}{NPQ}
\newcommand{\fcfsname}{FCFS}
\newcommand{\dapqname}{delayed APQ}
\newcommand{\highname}{high-priority}
\newcommand{\lowname}{low-priority}
\newcommand{\classname}{class}
\newcommand{\zexp}{\text{Z-Exp}}
\newcommand{\serv}{\mathcal{S}} %
\newcommand{\resid}[1]{ %
\ifx&#1&
  \mathcal{R}
\else
  \mathcal{R}_{#1}
\fi} 
\newcommand{\lam}{\lambda} %
\newcommand{\lamA}{\lam^{\mathrm{A}}} %
\newcommand{\rhoA}{\rho^{\mathrm{A}}} %
\newcommand{\dapq}{\text{\tiny\upshape DAPQ}} %
\newcommand{\npq}{\text{\tiny\upshape NPQ}} %
\newcommand{\fcfs}{\text{\tiny\upshape FCFS}} %
\newcommand{\wait}{\mathcal{W}}
\newcommand{\waitD}{\wait^{\dapq}} %
\newcommand{\waitDparams}[2]{\wait^{\dapq(#1,#2)}} %
\newcommand{\waitN}{\wait^{\npq}} %
\newcommand{\waitF}{\wait^{\fcfs}} %
\newcommand{\cdf}{F}
\newcommand{\cdfD}{\cdf^{\dapq}} %
\newcommand{\cdfDparams}[2]{\cdf^{\dapq(#1,#2)}} %
\newcommand{\den}{f}
\newcommand{\cdfres}[1]{\cdf^{\scriptstyle \resid{#1}}} %
\newcommand{\cdfresc}[1]{\cdf^{\scriptscriptstyle \resid{} \mid {#1}}} %
\newcommand{\denres}[1]{\den^{\scriptstyle \resid{#1}}} %
\newcommand{\denresc}[1]{\den^{\scriptscriptstyle \resid{} \mid {#1}}} %
\newcommand{\lst}{\tilde F}
\newcommand{\lstD}{{\lst}^{\dapq}} %
\newcommand{\lstN}{{\lst}^{\npq}} %
\newcommand{\lstres}[1]{\lst^{\scriptstyle \resid{#1}}} %
\newcommand{\lstresc}[1]{\lst^{\scriptscriptstyle \resid{} \mid {#1}}} %
\newcommand{\num}{N}
\newcommand{\sta}{\pi}
\newcommand{\acc}{\eta}
\newcommand{\accres}[1]{\acc_{\scriptscriptstyle \resid{#1}}} %
\newcommand{\accresc}[1]{\acc_{\scriptscriptstyle \resid{} \mid {#1}}} %
\newcommand{\accs}{\tilde\eta}
\newcommand{\accDs}{{\accs}^{\dapq}} %
\newcommand{\accNs}{{\accs}^{\npq}} %
\newcommand{\accsres}[1]{\accs_{\scriptscriptstyle \resid{#1}}} %
\newcommand{\accsresc}[1]{\accs_{\scriptscriptstyle \resid{} \mid {#1}}} %
\newcommand{\accDsresc}[1]{\accDs_{\scriptscriptstyle \resid{} \mid {#1}}}
\newcommand{\accNsresc}[1]{\accNs_{\scriptscriptstyle \resid{} \mid {#1}}}
\newcommand{\numresid}[2]{p_{#2}(#1)} 
\newcommand{\numarriv}[2]{q_{#2}(#1)}
\newcommand{\probempty}[4]{
\ifx&#1&
  {B_{#2,#3}(#4)}
\else
  {B_{#2,#3}(#1, #4)}
\fi} 
\preprint{
\title{High-Priority Expected Waiting Times in the\\ Delayed Accumulating Priority Queue\\ with Applications to Health Care KPIs}

\date{} 					%

\author[,1]{Blair Bilodeau\footnote{Correspondence to: \url{blair.bilodeau[at]mail.utoronto.ca}}}
\author[2]{David A.~Stanford}
\affil[1]{University of Toronto}
\affil[2]{Western University}
}
\begin{document}	

\infor{
\title{High-Priority Expected Waiting Times in the Delayed Accumulating Priority Queue with Applications to Health Care KPIs}

\author{
\name{Blair Bilodeau\textsuperscript{a}\thanks{CONTACT Blair Bilodeau. Email: blair.bilodeau@mail.utoronto.ca} and David A.~Stanford\textsuperscript{b}}
\affil{\textsuperscript{a}Department of Statistical Sciences, University of Toronto;\\ \textsuperscript{b}Department of Statistical and Actuarial Sciences, Western University}
}
}

\maketitle

\preprint{\vspace{-40pt}}

\begin{abstract}
We provide the first analytical expressions for the expected waiting time of \highname{} customers in the \dapqname{} by exploiting a classical conservation law for work-conserving queues. Additionally, we describe 
an algorithm to compute
the expected waiting times of both \lowname{} and \highname{} customers, 
which requires only the truncation of sums that converge quickly in our experiments. These insights are used to demonstrate how the accumulation rate and delay level should be chosen by health care practitioners to optimize common key performance indicators (KPIs). In particular, we demonstrate that for certain nontrivial KPIs, an accumulating priority queue with a delay of zero is always preferable.
Finally, we present a detailed investigation of the quality of an exponential approximation to the \highname{} waiting time distribution, which we use to optimize the choice of queueing parameters with respect to both classes' waiting time distributions.
\end{abstract}

\infor{
\begin{keywords}
Priority queues; Health care KPIs; Exponential approximation 
\end{keywords}
}

\section{Introduction} \label{sec:intro}

\emph{Accumulating priority queues} (\apqname{}s) are a class of queueing disciplines in which waiting customers accrue credit over time at class-dependent rates. By convention, the highest priority customers belong to \classname{}-1, and they accumulate credit at the highest rate. At service completion instants, the customer present with the greatest amount of accumulated credit is the next one selected for service. These are especially useful in highly congested systems when even a moderate proportion of the arrival load is due to \highname{} customers, since with high probability there will be least one \highname{} customer in system and thus \lowname{} customers can have extremely long wait times under a strict priority regime.

\apqname{}s are well-understood theoretically, beginning with the derivation of the M/G/1 waiting times for all classes in \citet{stanford14apq}. 
Further extensions include preemptive service \citep{fajardo15preemptive}, nonlinear priority accumulation \citep{li17nonlinear}, hard upper limits on waiting time \citep{cildoz19apqh}, and applications to COVID-19 policies \citep{oz20covid}. 
Most relevant to the present work is \citet{mojalal19dapq}, in which for the first time, not all classes accumulate priority credit starting from their arrival instant. Instead, while the higher of two classes starts to accrue credit immediately, customers from the lower of the two classes only do so after a fixed period of initial delay. For this reason, \citeauthor{mojalal19dapq} named this the \emph{delayed accumulating priority queue}.

Both the original \apqname{} models and the \dapqname{} variant were developed in response to a perceived need stemming from the health care setting. Many health care systems are measured against a set of \emph{key performance indicators} (KPIs), and it is quite common for these KPIs to comprise a delay target, representing a time by which treatment should commence, and a compliance probability, which specifies the minimal acceptable fraction of customers to be seen by this time. Clearly, \apqname{}s provide more flexibility than, for instance, classical \emph{non-preemptive priority queues} (\npqname{}s), in terms of fine-tuning a queueing system to better comply with a given set of KPIs. This extra flexibility is provided by the accumulation rates we are free to choose. The \dapqname{} model goes one step further, in terms of its choice of initial delay period during which \lowname{} customers do not accumulate priority.

\citet{mojalal19dapq} relate the \lowname{} waiting time distribution in the \dapqname{} to that of the \lowname{} waiting time distribution in a related \npqname{}. In particular, they establish that, up to the end of the delay period, there is no difference in the actual waiting times incurred in these queues. Additionally, they provide a generic formula for the \lowname{} waiting time distribution in terms of the number of customers found in system.
However, the formula is not easily implemented to compute the necessary terms, and does not provide any information about the waiting time of \highname{} customers.
While an arriving \highname{} customer will necessarily wait for all other \highname{} customers they find in system upon arrival, the same thing cannot be said about the \lowname{} customers they find. Indeed, the longer the arriving \highname{} customer waits, the greater the amount of accrued credit they earn, leading to a greater likelihood that their credits will exceed some or all of the \lowname{} customers they find in the system. Consequently, existing techniques cannot be used to compute the waiting time distribution of \highname{} customers.

Turning to the problem this poses for the analysis of the \dapqname{}, it means we have an incomplete set of tools to determine the best \dapqname{} to meet the KPIs of some two-class health care systems that might employ such a strategy. For each delay period, subject to computation of the \lowname{} waiting time, we can determine the optimal accumulation rate to comply with the KPI for the \lowname{} patient class. However, we have no such tool to do so for the \highname{} class. Other than simulating such systems, which defeats the purpose of developing an analytical tool in the first place, we have, at present, 
no means to assess compliance of a \highname{} KPI of the delay-limit-and-compliance-level sort.

This paper is intended to address both of these existing limitations.
We provide the exact expected waiting time for \emph{both} the \lowname{} and \highname{} classes, as well as a computational algorithm to evaluate these by truncating an infinite sum. 
We are able to do so because the \dapqname{} is a work-conserving queueing system, and as such the expected delays incurred in it must obey the M/G/1 conservation law for waiting times \citep[see][]{kleinrock65conservation}. 
This development means that we have a more complete package available to analyse KPIs for a two-class \dapqname{}. The consequence of this for delay-limit-and-compliance-level KPIs is that we are able to quantify the impact on the expected waiting time of the \highname{} class of various combinations of parameter values (initial delay and priority accumulation rates). Thus, we have two pieces of information to optimize over with these two free parameters, and so for every set of KPI values it is possible to define an optimal choice of parameter values (which we find to be unique in our experiments).

While the exactness of the \highname{} expected waiting time is desirable for optimizing queueing parameters relative to health care KPIs, it is still unable to capture the tail behaviour of the waiting time, which may change the optimal choice of queueing parameters. 
Consequently, we also propose a zero-inflated exponential approximation of the \highname{} waiting time, whose efficacy we demonstrate using both exact \apqname{} waiting time CDFs and simulated \dapqname{} waiting time CDFs.
This provides all the information needed to fully optimize the queueing parameters for health care KPIs, subject to approximation error.

The rest of the paper is arranged as follows. \cref{sec:notation} defines notation and reviews the current theoretical results relating to the \dapqname{}. 
\cref{sec:results} contains our new analytical expressions for the \classname{}-1 expected waiting time in the \dapqname{} along with a detailed analysis of the delay level's impact on this. 
We apply this analysis to health care KPIs in \cref{sec:optimizing}.
In \cref{sec:approximation}, we define an exponential approximation for the \highname{} waiting time distribution, and evaluate its quality through numerical experiments and simulation.
We also present further optimization for health care KPIs using this approximation.
We conclude the paper in \cref{sec:conclusions} with our observations and future theoretical directions. All code is available at \url{https://github.com/blairbilodeau/delayed-apq-avg-wait}.

\section{Notation and preliminaries} \label{sec:notation}

Consider two classes of customers, labelled \classname{}-1 and \classname{}-2, where by convention \classname{}-1 has more urgency to be seen than \classname{}-2. Suppose they experience exponential inter-arrival times with rates $\lam_1, \lam_2 \in (0,\infty)$, so that overall customers arrive to the system at rate $\lam = \lam_1 + \lam_2$. Let $\serv$ denote the common service time of any customer and $1/\mu = \EE \serv$. As usual, define the corresponding occupancy rates $\rho_1 = \lam_1 / \mu$ and $\rho_2 = \lam_2 / \mu$, and for stability assume that $\rho = \lam / \mu < 1$. 
Since the relevant results depend only on the ratio of the \classname{}-2 accumulation rate to that of \classname{}-1, let \classname{}-1 customers accumulate priority at rate one credit per time unit and \classname{}-2 customers accumulate at rate $b \in [0,1]$. Furthermore, in the \dapqname{}, there is some $d \geq 0$ such that all \classname{}-2 customers only begin accumulating after they have been in system for $d$ units of time. The queueing discipline is such that at every service completion, the customer with the highest accumulated priority enters into service immediately, with no preemption, and consequently the server is only idle when the system is empty.

Denote the waiting time random variable of a customer by $\wait$, with a superscript to specify the queueing discipline and subscript to specify the priority class when required. For example, $\waitD_1$ is the waiting time of a \classname{}-1 customer in a \dapqname{}, while $\waitF$ is the waiting time of a customer in a \emph{first-come-first-serve} (FCFS) queue. For any random variable $X$ that has distribution function $\cdf$, denote the \emph{Laplace-Stieltjes transform} (LST) of $\cdf$ by $\lst(s) = \EE e^{-sX}$. We also introduce the notation $\lst(s;d) = \EE \left[e^{-sX} \1\{X > d\} \right]$. Let the CDF of the service time be $\cdf^\serv(x) = \PP [\serv \leq x]$ and have LST $\lst^\serv$. The same superscript and subscript notation is used to denote the distribution function of a waiting time; for example, $\cdfD_1(x) = \PP\left[\waitD_1 \leq x \right]$ and $\lstD_1(s) = \EE \exp\left\{-s\waitD_1 \right\}$.
Unless otherwise stated, we suppress the dependence of all random variables on $d$ and $b$ to simplify the notation.

We will make frequent use of the notion of an \emph{accreditation interval}, first introduced in \citet{stanford14apq}. For completeness, we restate the key concepts here. 
At any time $t$, 
let $\tau_t$ denote the time of the most recent service commencement.
Further, let $V(\tau_t)$ denote the amount of priority accumulated at time $\tau_t$ by the customer who commenced service at time $\tau_t$.
Define $M_2(t)$ to be the maximum amount of priority a \classname{}-2 customer could have accumulated by time $t$, given only the previous times at which a service commenced.
More precisely, $M_2(t)=0$ when the queue is empty, and otherwise it is defined recursively by 
\*[
  M_2(t) = \min\{M_2(\tau_t), V(\tau_t)\} + b(t-\tau_t).
\]
Note that the definition of $M_2(t)$ depends on $b$ but not $d$.

A \classname{}-1 customer becomes \emph{accredited} once their accumulated priority at time $t$ is strictly larger than $M_2(t)$.
Accreditation can never be undone, since the \classname{}-1 accumulated priority grows linearly with slope $1$ while $M_2(t)$ grows at most linearly with slope $b\leq 1$. 
A \classname{}-2 customer is always unaccredited by definition. 
Crucially, an unaccredited customer will not be served until there are no accredited customers remaining in system. Further, each unaccredited customer entering into service generates an \emph{accreditation interval}, which consists of their service time plus the service times of all accredited customers served before the next unaccredited service. We denote the CDF of the random variable corresponding to the length of an accreditation interval by $\acc$ and the LST by $\accs$, with a superscript for the queueing discipline. 
Lemma~4.2 of \citet{stanford14apq} shows that, under Poisson arrivals, customers in the APQ become accredited according to a Poisson process at rate $\lamA_1 = \lam_1(1-b)$, which we refer to as the \emph{accreditation rate}. For notational simplicity, we define $\rhoA_1 = \lamA_1 / \mu$. 

Relative to a specific tagged customer, let $\num_t$ denote the number of customers ahead of them in system (including the one in service) $t$ units of time after their arrival, and $\sta_i = \PP[\num_0 = i]$ denote the stationary distribution of the number of customers found in system upon arrival. Let $\resid{}$ denote the residual service time of the customer currently in service upon arrival of the tagged customer, and denote its CDF by $\cdfres{}$ with LST $\lstres{}$. 
For the random variable corresponding to the length of a \emph{residual accreditation interval}, which is composed of $\resid{}$ plus the service times of all accredited customers served before the next unaccredited service, we denote the CDF by $\accres{}$ and the LST by $\accsres{}$.
For any $j \in \N$, define the conditional CDF $\cdfresc{j}(t) = \PP[\resid{} \leq t \mid \num_d = j]$ and the conditional LST $\lstresc{j}$. 
Finally, conditional on $\num_d=j$, denote the conditional CDF of the residual accreditation interval length by $\accresc{j}$, and denote the corresponding conditional LST by $\accsresc{j}$.
Observe that the number in system and the residual service time are independent of the queueing discipline, depending only on the arrival rates and service distribution, while the residual accreditation interval depends on the queueing discipline, which will be denoted by a superscript as usual. 

We are now able to restate the following main results from \citet{mojalal19dapq} that will be used in the remainder of the paper. While the results are stated out of order from the original paper, we feel this is more natural for observing how the M/M/1 \dapqname{} is a special case of the M/G/1 \dapqname{} where the residual accreditation interval has the same distribution as a standard accreditation interval.

$ $\\
\textbf{Corollary~3.1 of \citet{mojalal19dapq}} (M/G/1 \classname{}-2 Equivalence)\infor{.}
\*[
  \lstD_2(s) - \lstD_2(s;d) = \lstN_2(s) - \lstN_2(s;d).
\]

$ $\\
\textbf{Corollary~3.2 of \citet{mojalal19dapq}} (M/G/1 \classname{}-2 LST)\infor{.}
\*[
  \lstD_2(s;d) = \sum_{i=1}^\infty \sta_i \sum_{j=1}^\infty \PP \left[\num_d = j, \num_t > 0 \ \forall \ t \in [0,d) \big\lvert \num_0 = i \right] e^{-sd} \accDsresc{j}(s) \left(\accDs(s) \right)^{j-1},
\]
\emph{where}
\*[
  \accDs(s) = \lst^\serv \left(s + \lamA_1 (1-\accDs(s)) \right),
\]
\emph{and}
\*[
  \accDsresc{j}(s) = \lstresc{j} \left(s + \lamA_1 (1-\accDs(s)) \right).
\]

$ $\\
\textbf{Theorem~3.2 of \citet{mojalal19dapq}} (M/M/1 \classname{}-2 LST)\infor{.}
\*[
  \lstD_2(s;d) = \sum_{i=1}^\infty \sta_i \sum_{j=1}^\infty \PP \left[\num_d = j, \num_t > 0 \ \forall \ t \in [0,d) \big\lvert \num_0 = i \right] e^{-sd} \left(\accDs(s) \right)^j,
\]
\emph{where}
\*[
  \accDs(s) = \frac{s+\mu+\lamA_1 - \sqrt{(s+\mu+\lamA_1)^2 - 4\mu\lamA_1}}{2\lamA_1}.
\] 
\section{Impact of delay level on expected waiting times}
\label{sec:results}

In this section, we provide analytical expressions that can be evaluated by truncating an infinite sum to find both low and \highname{} expected waiting times in the \dapqname{}. These expressions allow us to visualize the impact of the delay level on the \highname{} expected waiting time, providing a deeper understanding of the dynamics of the \dapqname{}. Then, in \cref{sec:optimizing}, we use our computation algorithm for the analytical expected waiting time expressions to choose the optimal parametrizations for the \dapqname{} under various conditions.

\subsection{Computation of waiting times} \label{sec:computation}

The primary takeaway of the following results is that we have analytical statements that can be implemented as an algorithm requiring only the truncation of infinite sums that converge quickly in our experiments. This allows for rapid testing of various parameters to tune the accumulation and delay rates to meet any KPIs of interest to practitioners. We present the results for both exponential and deterministic queueing disciplines, and while the derivation will follow the same strategy, the specific expression will change significantly for an alternative service distribution.

The first result that we use to obtain our results is the application of Corollary~3.2 from \citet{mojalal19dapq} to the \npqname{}, corresponding to $b=0$. This lemma shows that, in addition to the previously known fact that the \npqname{} and \dapqname{} waiting times agree when within the delay period, their divergence following the end of the delay period is completely characterized by their different accreditation rates. Thus, in order to quantify the expected waiting time, we simply need to compute the expected increase from the differing accreditation rate and combine it with known expected waiting times for the \npqname{}.

\begin{lemma}\label{lem:npq}
\*[
  \lstN_2(s;d) = \sum_{i=1}^\infty \sta_i \sum_{j=1}^\infty \PP \left[\num_d = j, \num_t > 0 \ \forall \ t \in [0,d) \big\lvert \num_0 = i \right] e^{-sd} \accNsresc{j}(s) \left(\accNs(s) \right)^{j-1},
\]
where
\*[
  \accNs(s) = \lst^\serv \left(s + \lam_1 (1-\accNs(s)) \right),
\]
and
\*[
  \accNsresc{j}(s) = \lstresc{j} \left(s + \lam_1 (1-\accNs(s)) \right).
\]
\end{lemma}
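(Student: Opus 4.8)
The plan is to recognize the \npqname{} as the boundary case of the \dapqname{} obtained by setting the \classname{}-2 accumulation rate $b = 0$, and then to read the claimed formula off Corollary~3.2 of \citet{mojalal19dapq}. All three displayed equations in the lemma are identical in form to their \dapqname{} counterparts, differing only in that the accreditation rate $\lamA_1 = \lam_1(1-b)$ is everywhere replaced by $\lam_1$. Since $\lamA_1 = \lam_1$ precisely when $b = 0$, the whole statement should follow from a single substitution, provided that substitution is legitimate.

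First I would argue that the $b = 0$ \dapqname{} and the \npqname{} induce the same sample-path dynamics. When $b = 0$ a \classname{}-2 customer accrues no credit at any time, so its accumulated priority is identically zero irrespective of the delay level $d$, while a \classname{}-1 customer that has been present for any positive duration carries strictly positive credit. Hence at each service-completion epoch the customer of greatest credit is a \classname{}-1 customer whenever any is present, which is exactly the non-preemptive strict-priority rule defining the \npqname{}. The two disciplines therefore select the same customer at every decision epoch, apart from the measure-zero event of a \classname{}-1 arrival coinciding with a completion, so the \classname{}-2 waiting-time laws coincide and in particular $\lstN_2(s;d) = \lstD_2(s;d)\big\vert_{b=0}$.

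With this identification I would substitute $b = 0$ into each equation of Corollary~3.2. In the master sum the weights $\sta_i$, the conditional probability $\PP[\num_d = j, \num_t > 0 \ \forall \ t \in [0,d) \mid \num_0 = i]$, and the factor $e^{-sd}$ depend only on the arrival rates and the service law, so they are untouched by the choice of $b$; only the accreditation quantities change. The fixed-point equation $\accDs(s) = \lst^\serv(s + \lamA_1(1-\accDs(s)))$ becomes $\accNs(s) = \lst^\serv(s + \lam_1(1-\accNs(s)))$, and likewise $\accDsresc{j}(s) = \lstresc{j}(s + \lamA_1(1-\accDs(s)))$ becomes $\accNsresc{j}(s) = \lstresc{j}(s + \lam_1(1-\accNs(s)))$, reproducing the stated expressions verbatim.

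The one point that needs care --- and the only real obstacle --- is confirming that Corollary~3.2 remains valid at the endpoint $b = 0$ rather than merely on the open interval $b \in (0,1]$. I would check that the accreditation-interval construction underlying the corollary degenerates gracefully: with no \classname{}-2 accrual, every \classname{}-1 arrival during an unaccredited service eventually becomes accredited, so an accreditation interval reduces to the familiar \classname{}-1 sub-busy period of the M/G/1 \npqname{}, whose LST is exactly the fixed point $\accNs$. Verifying that this limiting object still obeys the renewal/branching decomposition used to derive Corollary~3.2, and that the exponentially convergent double sum retains its meaning, is the crux; once that is in place the lemma is immediate.
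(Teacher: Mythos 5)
Your proposal is correct and follows exactly the paper's own route: the paper obtains \cref{lem:npq} precisely as ``the application of Corollary~3.2 from \citet{mojalal19dapq} to the \npqname{}, corresponding to $b=0$,'' i.e.\ substituting $b=0$ (so that $\lamA_1 = \lam_1$) into the \dapqname{} formulas, which is your central step. Your additional verification that the $b=0$ dynamics coincide with strict non-preemptive priority, and that the accreditation-interval construction degenerates to the \classname{}-1 sub-busy period, is sound supporting detail that the paper leaves implicit.
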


Additionally, the key fact that allows us to obtain the \classname{}-1 expected waiting time is that for work-conserving queues, the expected waiting time between classes can be related to the FCFS waiting time by their respective occupancy rates.

$ $\\
\textbf{Theorem~1 of \citet{kleinrock65conservation}} (Work-Conserving Conservation Law)
\emph{For any queue with $K$ classes, each with a Poisson arrival rate of $\lam_k$ and common service distribution $\serv$, and a single-server non-preemptive queueing discipline,}
\*[
  \frac{\rho}{1-\rho} \frac{\lambda \EE \serv^2}{2} = \sum_{k=1}^K \rho_k \EE \wait_k.
\]

Observe that the \dapqname{} (which includes the \apqname{} and \npqname{} as special cases) satisfies the conditions of this theorem; that is, all of these queues are work-conserving. Thus, we can apply these results to obtain analytical expressions for the average waiting time of both \classname{}-1 and \classname{}-2 customers in the M/M/1 and M/D/1 \dapqname{}s. The main technique is to differentiate the LST expressions for the waiting time, leading to expectations, and then compute only the difference between these terms for the \dapqname{} and the \npqname{}. The cancellation within this difference allows for the computation of the expected waiting time rather than only the expected waiting time conditional on whether it falls before or after the delay period. The terms are then simplified to provide an explicit implementation; full derivations are provided in \cref{app:proofs}.

\begin{theorem}[M/M/1 Expected Waiting Time Computation]\label{thm:mm1_avg}
\*[
  \EE \left[\waitD_2\right] 
  = \ & \frac{\rho}{\mu (1-\rho_1)(1-\rho)} - \frac{\rho_1 b}{\mu (1 - \rhoA_1)(1 - \rho_1)} \times \nonumber \\ 
  &\hspace{5pt} \left[(1-\rho)\sum_{k=0}^\infty \frac{e^{-\nu d} (\nu d)^k}{k!} \left(\sum_{\ell=1}^k \ell x_\ell^{(k)}  \right) +
    \rho e^{-\nu d + r \nu d} \left(\frac{1}{1-\rho} + r \nu d \right) \right],
\]
where $q = \frac{\mu}{\mu + \lam_1}$, $p = \frac{\lam_1}{\mu + \lam_1}$, $r=p+q\rho^2$, and $\nu = \mu + \lambda_1$; the $x_\ell^{(k)}$'s are defined recursively as
\*[
  x_1^{(1)} = r-p,
  \quad
  x_1^{(2)} = q r p,
  \quad
  x_2^{(2)} = r^2 - p^2,
\]
and for $k \geq 3$,
\*[
  &x_1^{(k)} = q x_2^{(k-1)},
  \quad
  x_k^{(k-1)} = \rho r^{k-1}, \\
  &x_\ell^{(k)} = p x_{\ell-1}^{(k-1)} + q x_{\ell+1}^{(k-1)} \ \text{ for } \ \ell \in \{2,\dots,k-1\}, \\
  &x_k^{(k)} = r^k - p^k.
\]
Furthermore,
\*[
  \EE \left[\waitD_1\right] = \frac{1}{\rho_1} \left(\frac{\rho^2}{\mu(1 - \rho)} - \rho_2 \EE \left[\waitD_2\right] \right).
\]
\end{theorem}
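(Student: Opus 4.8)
The plan is to treat the two displayed identities separately, since the high-priority formula follows almost immediately from the conservation law once the low-priority mean is in hand, whereas the low-priority formula requires genuine work. For $\EE[\waitD_1]$ I would apply the Work-Conserving Conservation Law of \citet{kleinrock65conservation} with $K = 2$. Because the \dapqname{} is work-conserving, the law reads $\rho_1 \EE[\waitD_1] + \rho_2 \EE[\waitD_2] = \frac{\rho}{1-\rho}\frac{\lam\,\EE\serv^2}{2}$; substituting the M/M/1 second moment $\EE\serv^2 = 2/\mu^2$ and $\lam/\mu = \rho$ collapses the right-hand side to $\rho^2/(\mu(1-\rho))$, and solving for $\EE[\waitD_1]$ gives the claimed expression. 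This step is routine, so the burden is entirely on the formula for $\EE[\waitD_2]$.

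For $\EE[\waitD_2]$ I would first use Corollary~3.1 of \citet{mojalal19dapq} to write $\lstD_2(s) = \lstN_2(s) + \left(\lstD_2(s;d) - \lstN_2(s;d)\right)$, so that $\EE[\waitD_2] = \EE[\waitN_2] - \frac{\dee}{\dee s}\left(\lstD_2(s;d) - \lstN_2(s;d)\right)\big|_{s=0}$. The first term is the classical M/M/1 non-preemptive Class-2 mean wait $\EE[\waitN_2] = \rho/(\mu(1-\rho_1)(1-\rho))$, which supplies the leading term of the theorem. For the difference term I would insert the M/M/1 representations: Theorem~3.2 of \citet{mojalal19dapq} gives $\lstD_2(s;d) = \sum_{i,j} w_{ij}\, e^{-sd}\,(\accDs(s))^j$, while the M/M/1 specialization of \cref{lem:npq} (using that the residual accreditation interval has the same law as a standard one) gives $\lstN_2(s;d) = \sum_{i,j} w_{ij}\, e^{-sd}\,(\accNs(s))^j$, with the \emph{same} weights $w_{ij} = \sta_i\,\PP[\num_d = j, \num_t > 0 \ \forall\, t \in [0,d) \mid \num_0 = i]$, since these depend only on the arrival rates and service law and not on the regime. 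Thus only the accreditation LSTs $\accDs$ and $\accNs$ differ.

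The crucial cancellation is that $\accDs(0) = \accNs(0) = 1$, so on differentiating the difference at $s=0$ both the $e^{-sd}$ prefactor and the constant term vanish, leaving $\frac{\dee}{\dee s}\left(\lstD_2(s;d) - \lstN_2(s;d)\right)\big|_{s=0} = \left((\accDs)'(0) - (\accNs)'(0)\right)\sum_{i,j} j\, w_{ij}$. Differentiating the fixed-point equations for the two accreditation LSTs at $s=0$ yields $-(\accDs)'(0) = 1/(\mu(1-\rhoA_1))$ and $-(\accNs)'(0) = 1/(\mu(1-\rho_1))$; subtracting and using $\rhoA_1 = \rho_1(1-b)$ produces precisely the coefficient $\rho_1 b/(\mu(1-\rhoA_1)(1-\rho_1))$ appearing in front of the bracket. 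It then remains to evaluate $S := \sum_{i,j} j\, w_{ij} = \EE\left[\num_d\,\1\{\num_t > 0 \ \forall\, t\in[0,d)\}\right]$.

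Evaluating $S$ in closed form is where I expect the real difficulty to lie. During the delay period the number ahead of the tagged Class-2 customer increments only on Class-1 arrivals (rate $\lam_1$) and decrements on departures (rate $\mu$), since an equal-priority Class-2 arrival does not overtake it; this is the origin of $\nu = \mu + \lam_1$, $p = \lam_1/\nu$, and $q = \mu/\nu$. Conditioning on the Poisson$(\nu d)$ number $k$ of such events reduces $S$ to a sum over a discrete $(p,q)$-walk of length $k$, started from the stationary number in system $\sta_i = (1-\rho)\rho^i$, constrained to stay strictly positive, and weighted by its terminal height. The hard part will be carrying out this constrained-walk computation: I would enforce positivity by a reflection (ballot-type) argument and then resum the geometric initial weights $\rho^i$, with the image contribution supplying the factor $\rho^2$ that appears in $r = p + q\rho^2$. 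Resumming the part where the positivity barrier never binds gives the closed-form term $\rho\,e^{-\nu d + r\nu d}\left(\frac{1}{1-\rho} + r\nu d\right)$ (which needs no truncation), while the residual constrained contribution, organized as a Poisson mixture over $k$ with terminal-height coefficients, yields the recursively defined $x_\ell^{(k)}$ and the leading $(1-\rho)\sum_k$ term. Assembling the two pieces completes the evaluation of $S$ and hence the theorem.
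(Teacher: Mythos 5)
Your reduction of the theorem is correct and is exactly the paper's: the conservation-law step for $\EE[\waitD_1]$ (with $\EE\serv^2 = 2/\mu^2$), the use of Corollary~3.1 of \citet{mojalal19dapq} to isolate $\lstD_2(s;d)-\lstN_2(s;d)$, the observation that $\accDs(0)=\accNs(0)=1$ kills the $e^{-sd}$ contribution upon differentiating at $s=0$, the extraction of the coefficient $\rho_1 b/(\mu(1-\rhoA_1)(1-\rho_1))$ from the derivatives of the two fixed-point equations, and the uniformization of the barrier-constrained birth--death walk with $p=\lam_1/\nu$, $q=\mu/\nu$, $\nu=\mu+\lam_1$ all match the paper's proof step for step. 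Up to the point where the problem becomes evaluating $S=\sum_{i,j} j\,w_{ij}$, your argument is complete.

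The genuine gap is the evaluation of $S$ itself, which you correctly identify as the crux but only sketch, and the sketch's guiding claims are wrong. The paper needs no reflection/ballot argument: writing $\sta_+=(1-\rho)[\rho,\rho^2,\dots]$ for the stationary weights, $P_+$ for the killed walk, and $J_+$ for the height vector, it shows by induction that $(\sta_+P_+^k)_i=(1-\rho)\rho^{i-k}r^k$ for all $i\ge k+1$, since in $k$ steps the absorbing barrier cannot influence entries more than distance $k$ away. The factor $r=p+q\rho^2$ therefore comes from the \emph{bulk} action of the walk on geometric weights --- already visible in the first step, $(\sta_+P_+)_i=(1-\rho)(p\rho^{i-1}+q\rho^{i+1})=(1-\rho)\rho^{i-1}r$ for $i\ge 2$ --- and not from any ``image contribution,'' as you assert; indeed, in the region $i\ge k+1$ where the closed form holds, no path has touched the barrier and there is no image term at all. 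Likewise, the theorem's closed-form piece $\rho r^k\left(\tfrac{1}{1-\rho}+k\right)$ is the tail sum $(1-\rho)\sum_{\ell\ge k+1}\ell\,\rho^{\ell-k}r^k$, i.e.\ a split of $S$ by \emph{terminal height} ($\ell\ge k+1$ versus $\ell\le k$), not the ``unrestricted minus image'' split of a reflection argument; the two decompositions differ precisely on terminal heights $\ell\le k$, which is exactly where the $x_\ell^{(k)}$ live. In the paper the $x_\ell^{(k)}$ are, by definition, the remaining $k$ entries of $\sta_+P_+^k/(1-\rho)$, and their recursion ($x_1^{(k)}=qx_2^{(k-1)}$, $x_i^{(k)}=px_{i-1}^{(k-1)}+qx_{i+1}^{(k-1)}$, boundary values from the geometric tail) is read off the tridiagonal structure of $P_+$. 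A weighted reflection argument for $p\neq q$ could in principle be pushed through, but it yields explicit alternating binomial sums, and you would still owe the reconciliation of those sums with the recursively defined $x_\ell^{(k)}$ and with the Poisson resummation producing $e^{-\nu d+r\nu d}$; none of this is supplied, so the computational core of the theorem is asserted rather than proved.
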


Next, we consider the M/D/1 case. The additional difficulty comes from the fact that service is no longer memoryless, which leads to a more complex expression. However, once the residual service times are handled using results from \citet{adan09residual}, the result follows from the same method as for the M/M/1. Note that we have the same limitation as in \citet{mojalal19dapq}, where the delay level must be an integer multiple of the mean service length.

\begin{theorem}[M/D/1 Expected Waiting Time Computation]\label{thm:md1_avg}
If $d=0$,
\*[
  \EE \left[\waitD_2\right] = \frac{\rho_1 b \rho}{2\mu(1 - \rhoA_1)(1 - \rho_1)(1-\rho)}.
\]
\preprint{Otherwise, when $d=\ell/\mu$ for $\ell \in \N$,
\*[
  \EE \left[\waitD_2\right]
  = \ & \frac{\rho}{2\mu (1-\rho)(1-\rho_1)}  
  - \frac{\rho_1 b e^{-\lam_1 d}}{\mu(1 - \rhoA_1)(1 - \rho_1)} \times \nonumber \\
  &\hspace{5pt} \sum_{j=1}^\infty \Bigg\{ 
    \sum_{k=2}^{j+\ell} \sum_{a=0}^{j+\ell-k} 
    \frac{(-1)^a (\lam_1 d)^{j+\ell-k}}{d^a (j+\ell-k-a)! a!}
    \sum_{n=0}^{k-1} \frac{\sta_{k-n} \rho_1^n}{n!\mu} \left[\frac{j-1}{n+a+1} - \frac{1}{n+a+2} \right] + \nonumber \\
    &\hspace{30pt}
    \sum_{k=2}^\ell \sum_{m=k}^\ell
    \frac{(m-1)^{m-k}}{(m-k)!} 
    \left(\frac{k-1}{m-1} \right)
    \sum_{a=0}^{j+\ell-m} \frac{(-1)^a (\lam_1 d - \rho_1(m-1))^{j+\ell}}{(d - (m-1)/\mu)^{m+a}(j+\ell-m-a)! a!} \times \nonumber \\
    &\hspace{35pt}
    \sum_{n=0}^{k-1} \frac{\sta_{k-n} \lambda_1^{n-k}}{n!} \left[\frac{j-1}{n+a+1} + \frac{1}{n+a+2} \right]
  \Bigg\},
\]}
\infor{Otherwise, when $d=\ell/\mu$ for $\ell \in \N$, $\EE \left[\waitD_2\right]$ is equal to
\*[
  &\hspace{-1em}\frac{\rho}{2\mu (1-\rho)(1-\rho_1)}  
  - \frac{\rho_1 b e^{-\lam_1 d}}{\mu(1 - \rhoA_1)(1 - \rho_1)} \times \nonumber \\
  &\hspace{5pt} \sum_{j=1}^\infty \Bigg\{ 
    \sum_{k=2}^{j+\ell} \sum_{a=0}^{j+\ell-k} 
    \frac{(-1)^a (\lam_1 d)^{j+\ell-k}}{d^a (j+\ell-k-a)! a!}
    \sum_{n=0}^{k-1} \frac{\sta_{k-n} \rho_1^n}{n!\mu} \left[\frac{j-1}{n+a+1} - \frac{1}{n+a+2} \right] + \nonumber \\
    &\hspace{15pt}
    \sum_{k=2}^\ell \sum_{m=k}^\ell
    \frac{(m-1)^{m-k}}{(m-k)!} 
    \left(\frac{k-1}{m-1} \right)
    \sum_{a=0}^{j+\ell-m} \frac{(-1)^a (\lam_1 d - \rho_1(m-1))^{j+\ell}}{(d - (m-1)/\mu)^{m+a}(j+\ell-m-a)! a!} \times \nonumber \\
    &\hspace{20pt}
    \sum_{n=0}^{k-1} \frac{\sta_{k-n} \lambda_1^{n-k}}{n!} \left[\frac{j-1}{n+a+1} + \frac{1}{n+a+2} \right]
  \Bigg\},
\]}
where $\sta_i$ is given by
\*[
  \sta_i = (1-\rho) \left\{e^{i\rho} + (-1)^i \sum_{k=1}^{i-1} (-1)^k \frac{e^{k\rho} (k\rho)^{i-k}}{(i-k)!}\left[\frac{i - k(1-\rho)}{k\rho} \right] \right\}.
\]
Also,
\*[
  \EE \left[\waitD_1\right] = \frac{1}{\rho_1} \left(\frac{\rho^2}{2\mu(1-\rho)} - \rho_2 \EE \left[\waitD_2\right] \right).
\]
\end{theorem}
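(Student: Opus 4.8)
The plan is to obtain $\EE[\waitD_2]$ as a correction to the \npqname{} Class-2 mean of \cref{lem:npq}, and then to read off $\EE[\waitD_1]$ from the conservation law. First I would rewrite Corollary~3.1 of \citet{mojalal19dapq} as $\lstD_2(s) - \lstN_2(s) = \lstD_2(s;d) - \lstN_2(s;d)$, since the parts of the two transforms supported on $\{\wait \le d\}$ cancel. Differentiating and evaluating at $s=0$ then gives
\*[
  \EE\left[\waitD_2\right] = \EE\left[\waitN_2\right] - \frac{\dee}{\dee s}\left[\lstD_2(s;d) - \lstN_2(s;d)\right]_{s=0},
\]
where $\EE[\waitN_2]$ is the classical M/G/1 non-preemptive Class-2 mean wait, equal to $\rho/[\mu(1-\rho_1)(1-\rho)]$ for M/M/1 and $\rho/[2\mu(1-\rho_1)(1-\rho)]$ for M/D/1. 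This accounts for the leading term in each displayed formula.

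The crucial structural point is that Corollary~3.2 of \citet{mojalal19dapq} and \cref{lem:npq} present $\lstD_2(s;d)$ and $\lstN_2(s;d)$ with \emph{identical} weights $\sta_i\,\PP_{ij}(d)\,e^{-sd}$, differing only through $\accDsresc{j}(s)(\accDs(s))^{j-1}$ versus $\accNsresc{j}(s)(\accNs(s))^{j-1}$, where I write $\PP_{ij}(d) = \PP[\num_d=j,\,\num_t>0\ \forall t\in[0,d)\mid\num_0=i]$. Because every transform factor equals $1$ at $s=0$, the product rule shows that the $-d$ contributed by $e^{-sd}$ is common to both and cancels in the difference, leaving only the accreditation-factor derivatives. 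Differentiating the fixed-point identities for $\accDs$ and $\accDsresc{j}$ (and their \npqname{} analogues with $\lam_1$ replacing $\lamA_1$) at $s=0$ yields $\EE[\accD]=1/[\mu(1-\rhoA_1)]$ and $\EE[\accDresc{j}]=R_j/(1-\rhoA_1)$, with $R_j=\EE[\resid{}\mid\num_d=j]$. Subtracting the two regimes pulls out the common constant $\rho_1 b/[(1-\rhoA_1)(1-\rho_1)]$ multiplying $R_j+(j-1)/\mu$, reducing the entire problem to the single real sum
\*[
  \EE\left[\waitD_2\right] = \EE\left[\waitN_2\right] - \frac{\rho_1 b}{(1-\rhoA_1)(1-\rho_1)}\sum_{i=1}^\infty\sum_{j=1}^\infty \sta_i\,\PP_{ij}(d)\left(R_j+\frac{j-1}{\mu}\right).
\]

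The remaining and genuinely hard step is evaluating this taboo sum in closed form. In the M/M/1 case the number ahead of a tagged Class-2 customer inside the delay window is a birth--death process with up-rate $\lam_1$ and down-rate $\mu$, and memorylessness gives $R_j=1/\mu$, so $R_j+(j-1)/\mu=j/\mu$ and the sum is $\mu^{-1}\EE_\sta[\num_d\,\1\{\num_t>0\ \forall t<d\}]$. I would uniformize at rate $\nu=\mu+\lam_1$, so the number of transitions in $[0,d)$ is Poisson with mean $\nu d$ (giving the factors $e^{-\nu d}(\nu d)^k/k!$) and the embedded walk steps up with probability $p$ and down with probability $q$; imposing the taboo-at-zero constraint and averaging over the geometric stationary start, with $r=p+q\rho^2$ absorbing the boundary and averaging contributions, should produce exactly the lattice-path recursion defining the $x_\ell^{(k)}$. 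The M/D/1 case is harder still, since service is not memoryless: the conditional residual $R_j$ must be extracted via \citet{adan09residual}, deterministic service forces the lattice restriction $d=\ell/\mu$, and the taboo transient probabilities no longer come from a simple birth--death chain, which is what spawns the nested sums over $k,a,m,n$. When $d=0$ the taboo constraint is vacuous and $\num_d=\num_0$, so the double sum collapses to $\sum_i\sta_i(R_i+(i-1)/\mu)$, which evaluates elementarily using $\sum_i\sta_i R_i=\lam\EE\serv^2/2$ and the mean M/D/1 queue length, yielding the simplified $d=0$ expression.

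Finally, $\EE[\waitD_1]$ requires no further queueing analysis: the \dapqname{} is work-conserving, so Theorem~1 of \citet{kleinrock65conservation} with $K=2$ gives $\rho_1\EE[\waitD_1]+\rho_2\EE[\waitD_2]=\tfrac{\rho}{1-\rho}\tfrac{\lam\EE\serv^2}{2}$. Solving for $\EE[\waitD_1]$ and substituting $\lam\EE\serv^2/2=\rho/\mu$ (M/M/1) or $\rho/(2\mu)$ (M/D/1) produces the stated Class-1 expressions once $\EE[\waitD_2]$ is in hand. The stationary law $\sta_i$ entering the Class-2 sum is the geometric distribution $(1-\rho)\rho^i$ for M/M/1 and the classical M/D/1 queue-length distribution quoted in the statement. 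I expect the transient taboo sum, and in particular the M/D/1 conditional-residual bookkeeping, to be the main obstacle; everything else is differentiation of transforms and a one-line inversion of the conservation law.
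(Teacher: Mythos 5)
Your structural reduction is precisely the paper's own route: both you and the paper rewrite Corollary~3.1 of \citet{mojalal19dapq} so that only the post-delay transforms matter, differentiate the expressions of Corollary~3.2 and \cref{lem:npq} at $s=0$ (where the $e^{-sd}$ contributions indeed cancel in the difference), extract the common constant $\rho_1 b/[(1-\rhoA_1)(1-\rho_1)]$ from the accreditation-interval means, and invert the conservation law of \citet{kleinrock65conservation} for the \classname{}-1 mean. Your $d=0$ collapse is also correct and essentially the paper's (the paper evaluates $\sum_j\int_0^1 r\,\denres{j}(r)\,\dee r$ via the tail identity of \citet{adan09residual} plus the M/D/1 mean queue length; your residual-life identity $\sum_i\sta_i R_i=\lam\EE\serv^2/2$ is an equivalent elementary route to the same number).

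The genuine gap is the case $d=\ell/\mu>0$, which is the actual content of the displayed quadruple sum. Your proposal stops at the remark that deterministic service means ``the taboo transient probabilities no longer come from a simple birth--death chain, which is what spawns the nested sums over $k,a,m,n$'' --- that is a description of the theorem, not a derivation of it. The paper's proof spends most of its length constructing the joint subdensity of the residual service and the taboo event,
\begin{equation*}
\denres{j}(r) \;=\; \sum_{k=2}^{j+\ell}\numresid{k}{r}\,\numarriv{j+\ell-k}{r} \;-\; \sum_{k=2}^{\ell}\numresid{k}{r}\,\probempty{}{k}{r}{j},
\qquad
\probempty{}{k}{r}{j}=\sum_{m=k}^{\ell}\probempty{m}{k}{r}{j},
\end{equation*}
where $\numresid{k}{r}$ mixes the stationary law with Poisson arrivals during the residual $r$, $\numarriv{i}{r}$ counts arrivals in $(r,d)$, and the first-emptying probabilities $\probempty{m}{k}{r}{j}$ require a ballot/Borel-type combinatorial argument that produces the factors $\frac{(m-1)^{m-k}}{(m-k)!}\bigl(\frac{k-1}{m-1}\bigr)$; the nested sums then come from integrating each term against the uniform residual $\resid{}\sim\mathrm{Unif}(0,1/\mu)$ via the binomial-integral identity (\cref{lem:int_helper}). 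This emptying-time combinatorics under deterministic service is the key missing idea in your plan, and it cannot be recovered from ``differentiation of transforms.'' Two smaller points: \citet{adan09residual} enters the paper's proof only through the $d=0$ identity and the uniformity of the residual, not as a device for extracting $R_j$ when $d>0$ (there the conditional law is built from scratch); and your uniformization paragraph addresses the M/M/1 result (\cref{thm:mm1_avg}), which is not the statement at issue here.
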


\begin{remark}
The formula for $\sta_i$ is difficult to implement efficiently for large $i$, but can be approximated by $\sta_{i+1} / \sta_i = \tilde \sigma$, where $\tilde \sigma$ solves $e^{\rho \sigma} = \sigma e^\rho$ (c.f.\ Appendix~C of \citet{tijms94stochastic}).
\end{remark}

The utility in computing this result for the M/D/1 is that it allows the effect of the service time variation on \highname{} waiting times to be isolated. We may then approximate the average waiting time for a service distribution with the same mean but smaller variance than exponential service by simple interpolation between the M/D/1 and the M/M/1.

\subsection{Visualization of delay level impact}
Using a truncation of the infinite sums from \cref{thm:mm1_avg,thm:md1_avg} (which is exact in the infinite sum limit), we are now able to visualize the effect of introducing a delay on the \highname{} waiting time.
The truncation is performed such that the individual contribution of the terms has become smaller than $10^{-5}$.
The computations were performed in the R programming language on a 2017 Macbook Pro with 16GB of RAM, and all took (sometimes significantly) less than 5 minutes of run time.
We did not conduct an extensive study of computational complexity, and instead only wish to highlight that the computation timescale is minutes rather than days, and that arbitrarily better accuracy can be achieved by sacrificing run time in favour of computing more terms in the sum.

Recall that the accumulation parameter $b$ and delay parameter $d$ generalize both the FCFS and \npqname{} regimes. Specifically, the \apqname{} with $b=1$ corresponds to FCFS, the \apqname{} with $b=0$ or the \dapqname{} with $d=\infty$ correspond to the \npqname{}, and the \dapqname{} with $d=0$ corresponds to the \apqname{}. Consequently, increasing the delay level will yield a waiting time between that of the \apqname{} and that of the \npqname{}, where the former has the shortest \classname{}-2 waiting times and the latter has the shortest \classname{}-1 waiting times. To demonstrate this interpolation, we present the results for how changing the accumulation rate and delay period affects the expected waiting time for both \classname{}-1 and \classname{}-2 customers. 

\cref{fig:mm1_bvals} shows the effect of varying accumulation rate within $[0,1]$ on the \classname{}-1 (left panel) and \classname{}-2 (right panel) expected waiting times for M/M/1. We discuss the effect on \classname{}-1, since the \classname{}-2 values are just a vertical reflection and scaling by occupancy due to the M/G/1 conservation law. Observe that, by definition, the \npqname{} expected waiting time is unaffected by accumulation rate. However, for each delay level, the curve begins equal to \npqname{} at $b=0$, and then increases sub-linearly as $b$ tends to 1. This confirms that allowing \classname{}-2 customers to accumulate credit more rapidly penalizes \classname{}-1 customers, but reveals that this is marginally less impactful as the limit of $b=1$ is approached. Furthermore, as $d$ gets smaller, the curves shift up vertically, approaching the \apqname{}, which corresponds to $d = 0$. The continuous effect of this change is explored in \cref{fig:mm1_dvals}.

\preprint{
\begin{figure}[htb]
  \centering
  \subfloat{
    \includegraphics[width=80mm, height=70mm]{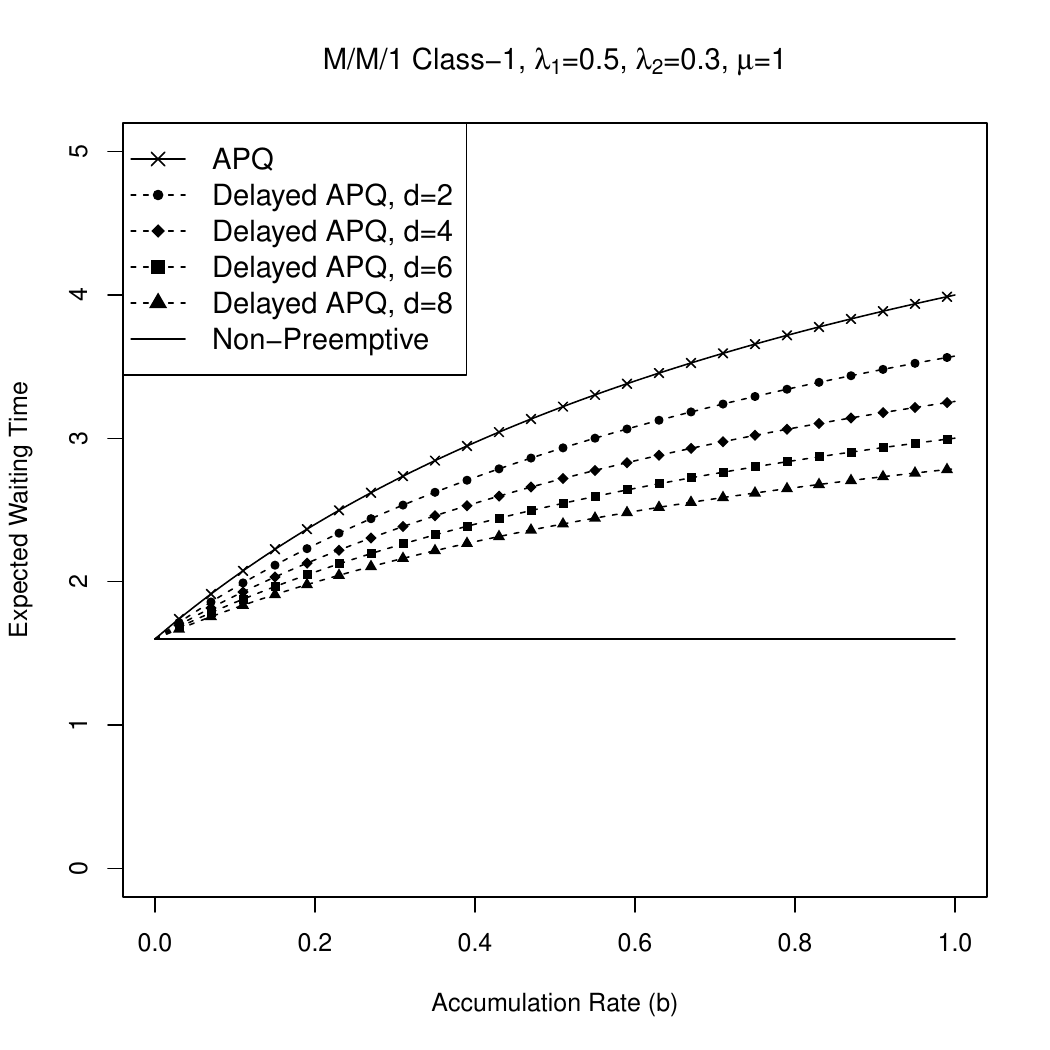}
  }
  \subfloat{
    \includegraphics[width=80mm, height=70mm]{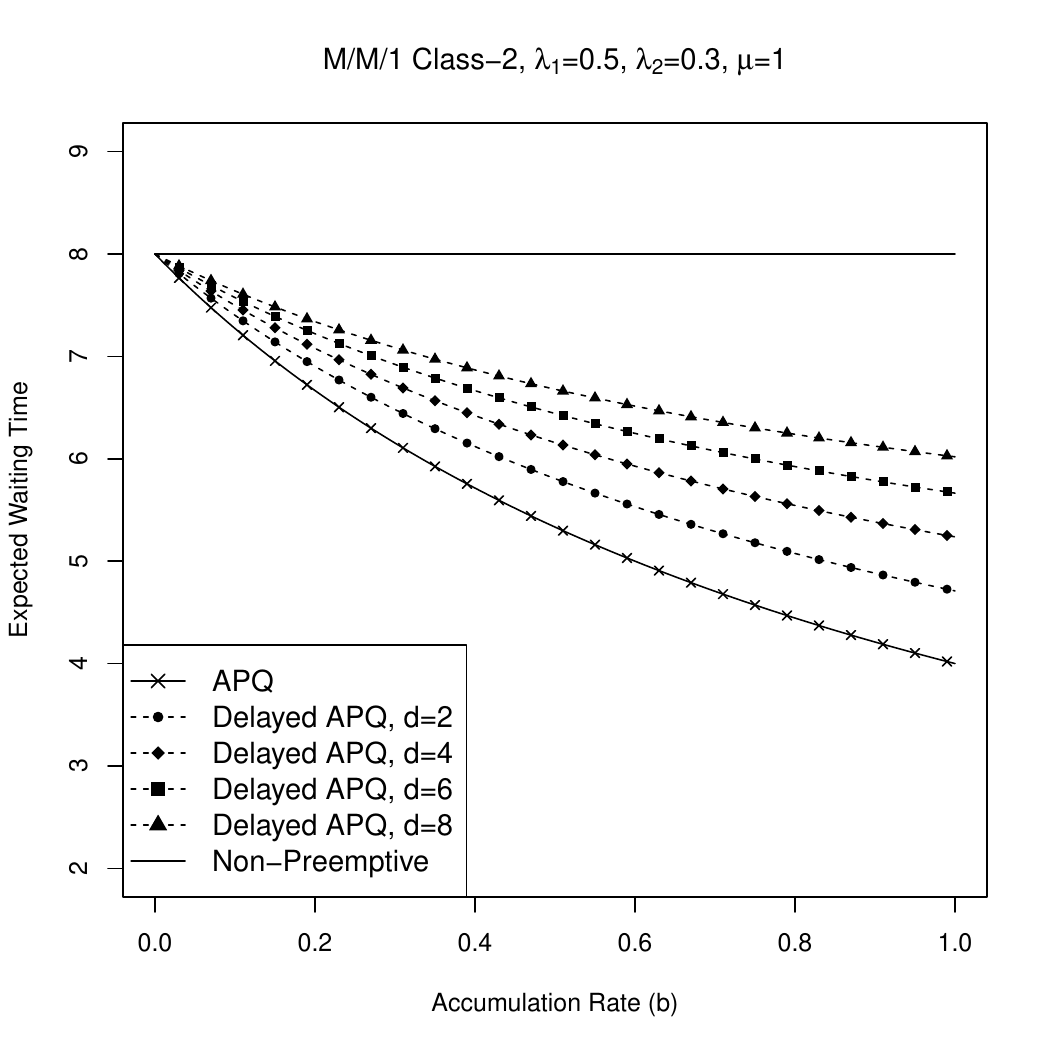}
  }
  \caption{The effect of accumulation rate $b$ on expected waiting time for the M/M/1 \dapqname{} when $\rho = 0.8$.}
  \label{fig:mm1_bvals}
\end{figure}}
\infor{
\begin{figure}[htb]
  \centering
  \subfloat{
    \includegraphics[width=70mm, height=55mm]{plots/MM1_bvals_class1.pdf}
  }
  \subfloat{
    \includegraphics[width=70mm, height=55mm]{plots/MM1_bvals_class2.pdf}
  }
  \caption{The effect of accumulation rate $b$ on expected waiting time for the M/M/1 \dapqname{} when $\rho = 0.8$.}
  \label{fig:mm1_bvals}
\end{figure}}

The same patterns apply for the M/D/1 case in \cref{fig:md1_bvals}, although all the waiting times are lower as there is no longer variation in the service times. It is of interest that the effect seems to be roughly halving the wait, which is exactly the impact on the expected waiting time in a FCFS queue when moving from M/M/1 to M/D/1. 

\preprint{
\begin{figure}[htb]
  \centering
  \subfloat{
    \includegraphics[width=80mm, height=70mm]{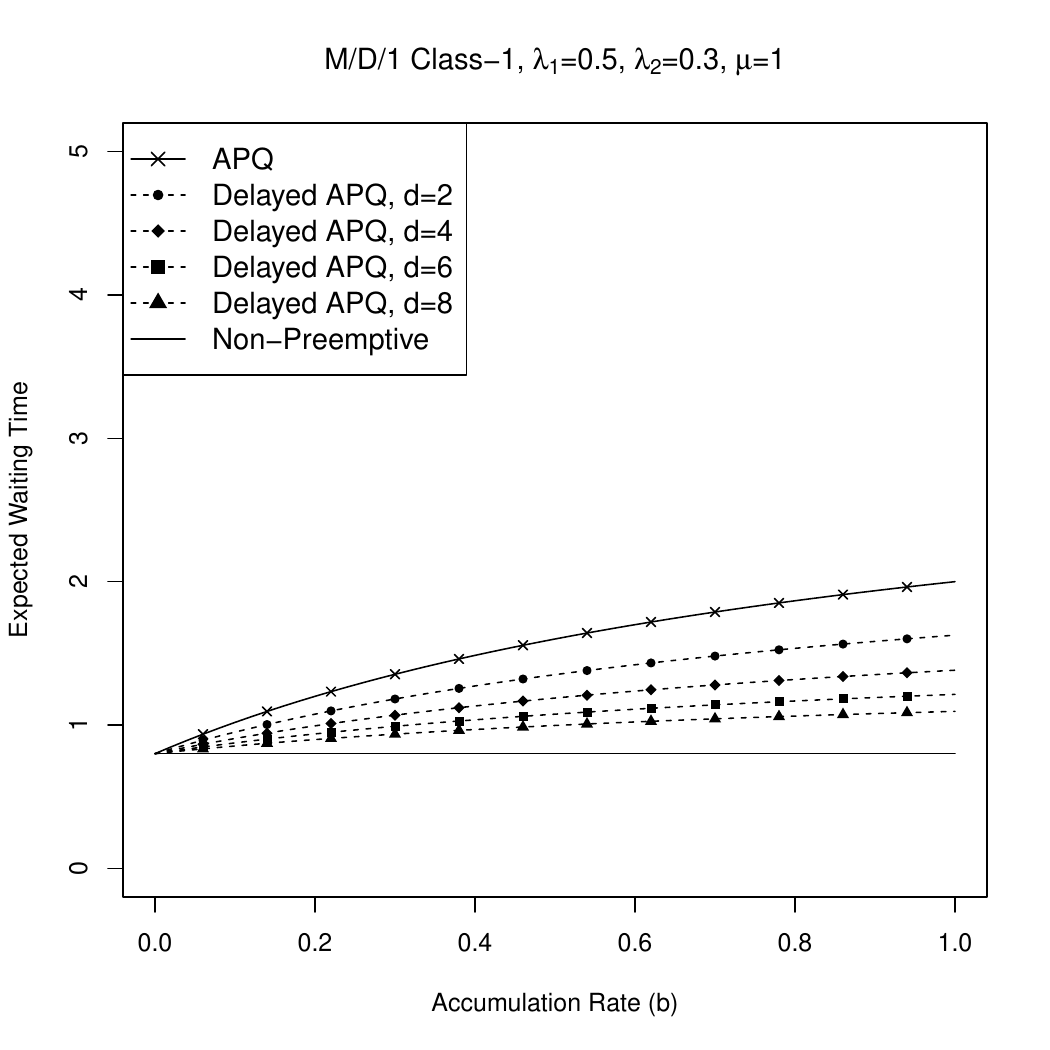}
  }
  \subfloat{
    \includegraphics[width=80mm, height=70mm]{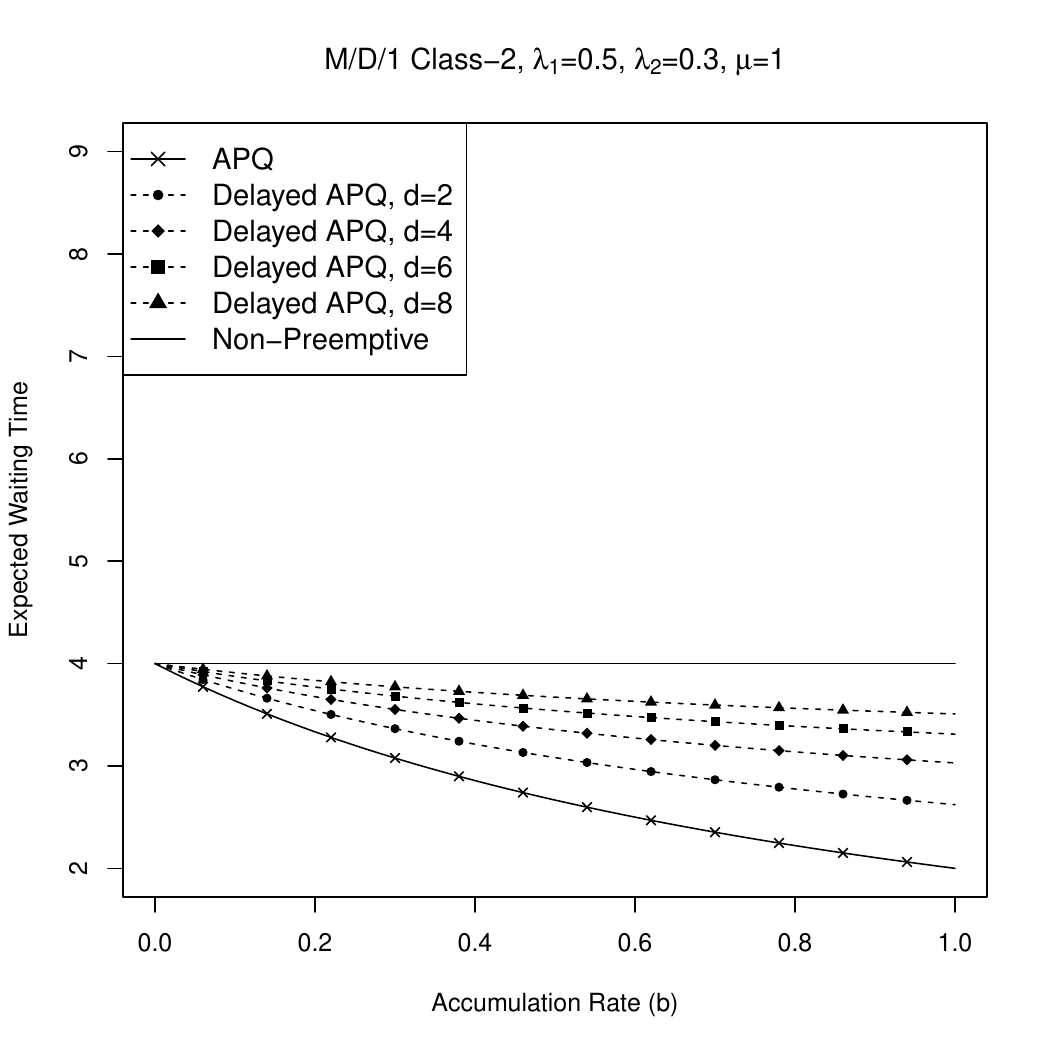}
  }
  \caption{The effect of accumulation rate $b$ on expected waiting time for the M/D/1 \dapqname{} when $\rho = 0.8$.}
  \label{fig:md1_bvals}
\end{figure}}
\infor{
\begin{figure}[htb]
  \centering
  \subfloat{
    \includegraphics[width=70mm, height=55mm]{plots/MD1_bvals_class1.pdf}
  }
  \subfloat{
    \includegraphics[width=70mm, height=55mm]{plots/MD1_bvals_class2.pdf}
  }
  \caption{The effect of accumulation rate $b$ on expected waiting time for the M/D/1 \dapqname{} when $\rho = 0.8$.}
  \label{fig:md1_bvals}
\end{figure}}

Next, we are interested in the effect of changing $d$ over different values of $b$ in \cref{fig:mm1_dvals}, where again the left panel pertains to \classname{}-1 and the right panel pertains to \classname{}-2. The FCFS case (corresponding to $b=1$ and $d=0$) will provide expected waiting times that act as an upper bound for the \classname{}-1 expected waiting time. Then, starting from $d=0$ (the \apqname{} expected waiting time), each $b$ curve decreases smoothly towards the \npqname{} expected waiting time, which corresponds to $b=0$. While we observe that the marginal impact of increasing $d$ always becomes smaller as $d$ becomes very large, the initial changes are much more pronounced (steeper slope) for small values of $b$. The same patterns hold, with the same scaling of about 1/2, for the M/D/1 case in \cref{fig:md1_dvals}. Again, as mentioned for \cref{thm:md1_avg}, we can only compute this at integer multiples of the mean service length (one, in this case).

\preprint{
\begin{figure}[htb]
  \centering
  \subfloat{
    \includegraphics[width=80mm, height=70mm]{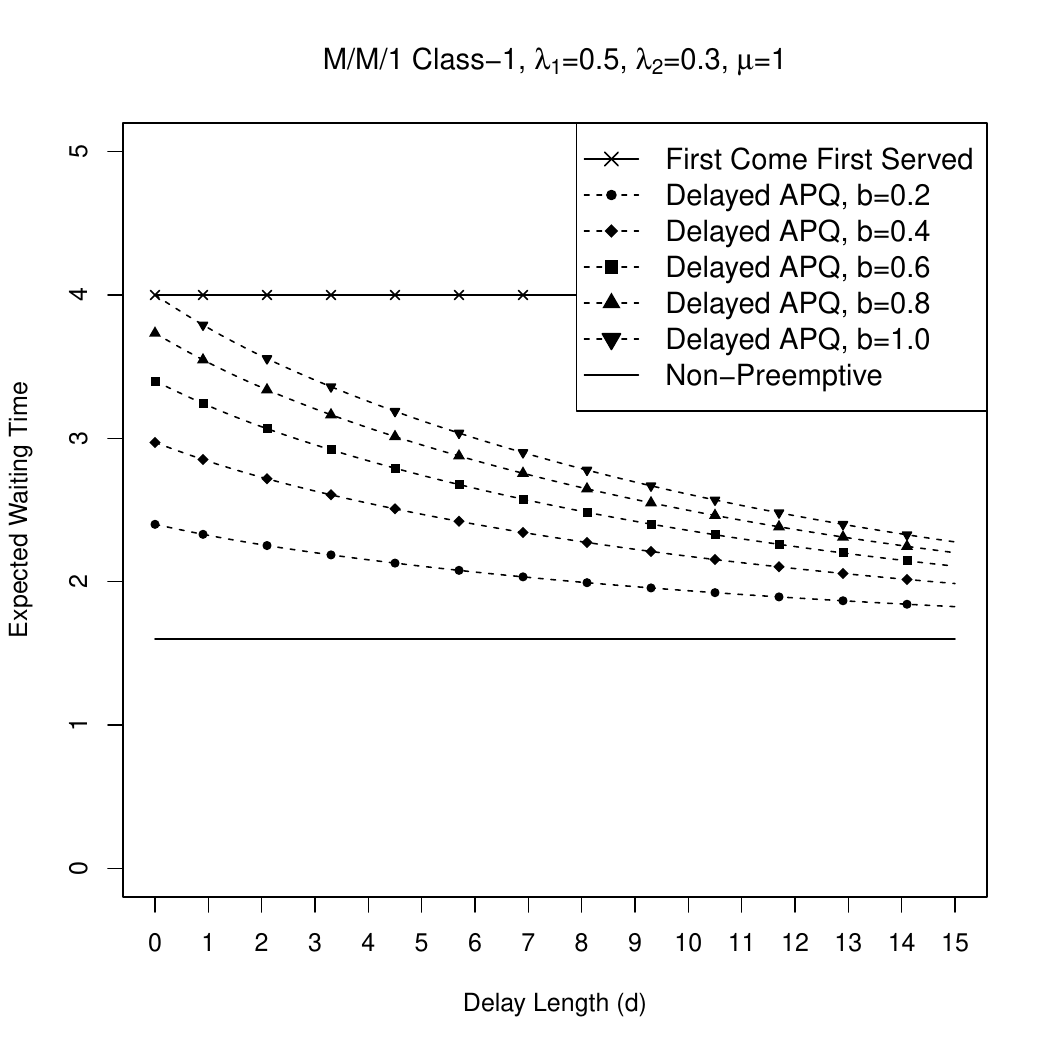}
  }
  \subfloat{
    \includegraphics[width=80mm, height=70mm]{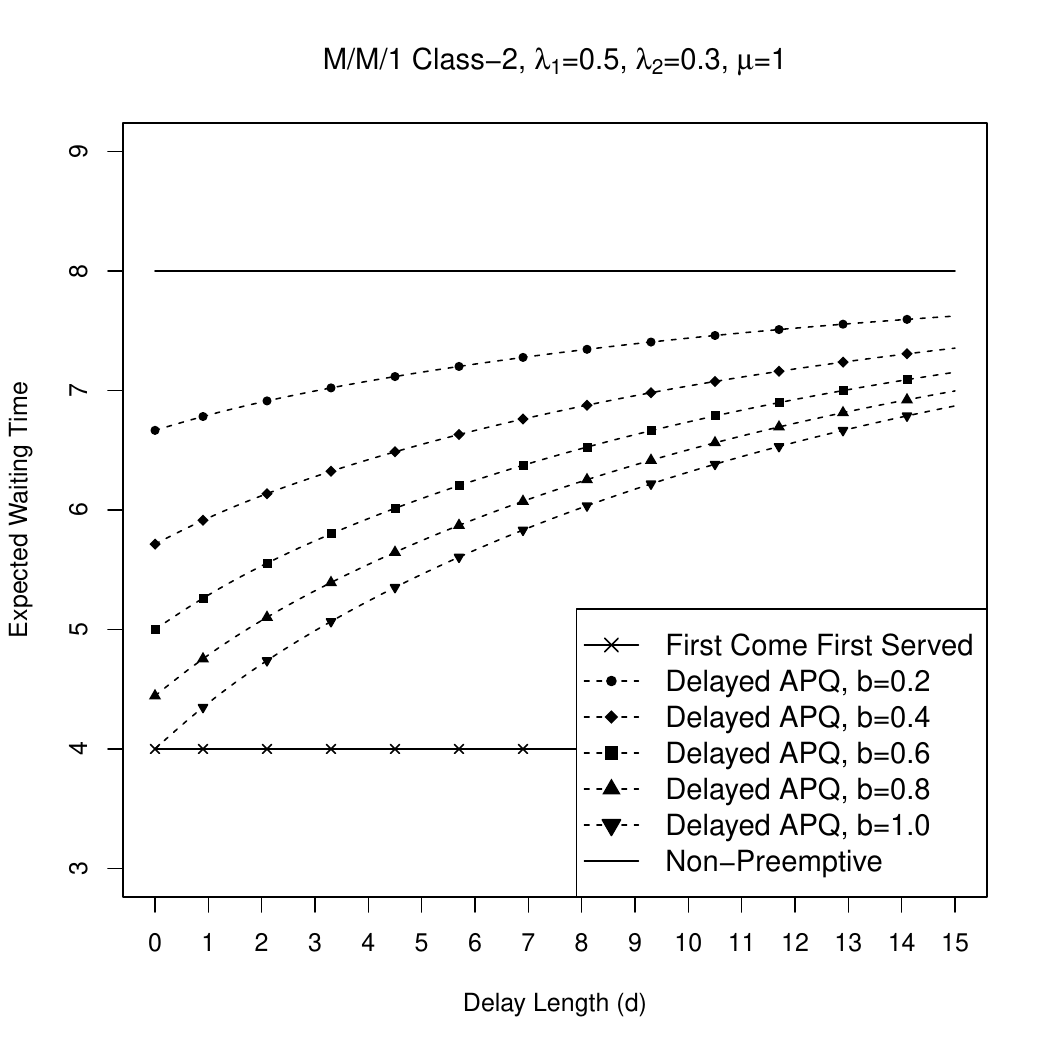}
  }
  \caption{The effect of delay length $d$ on expected waiting time for the M/M/1 \dapqname{} when $\rho = 0.8$.}
  \label{fig:mm1_dvals}
\end{figure}
\begin{figure}[htb]
  \centering
  \subfloat{
    \includegraphics[width=80mm, height=70mm]{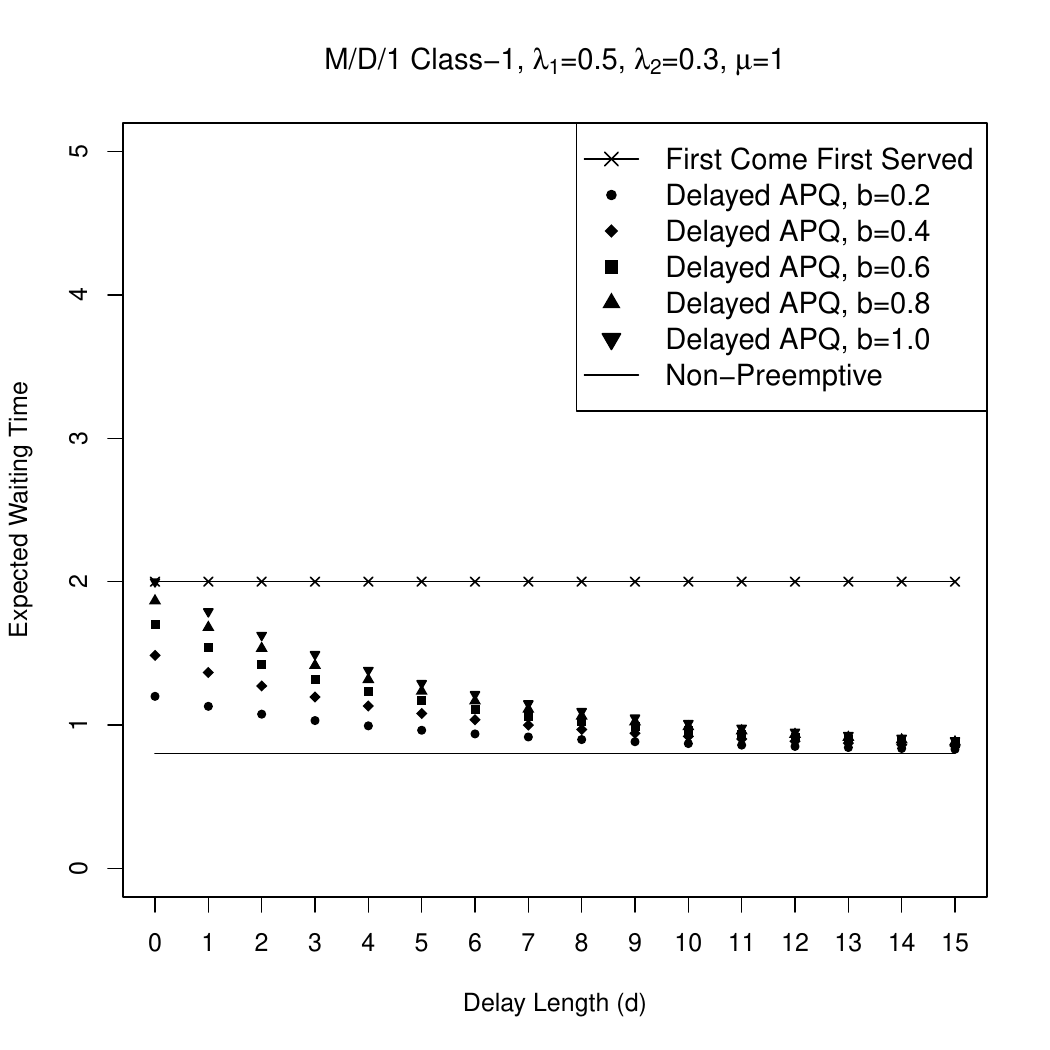}
  }
  \subfloat{
    \includegraphics[width=80mm, height=70mm]{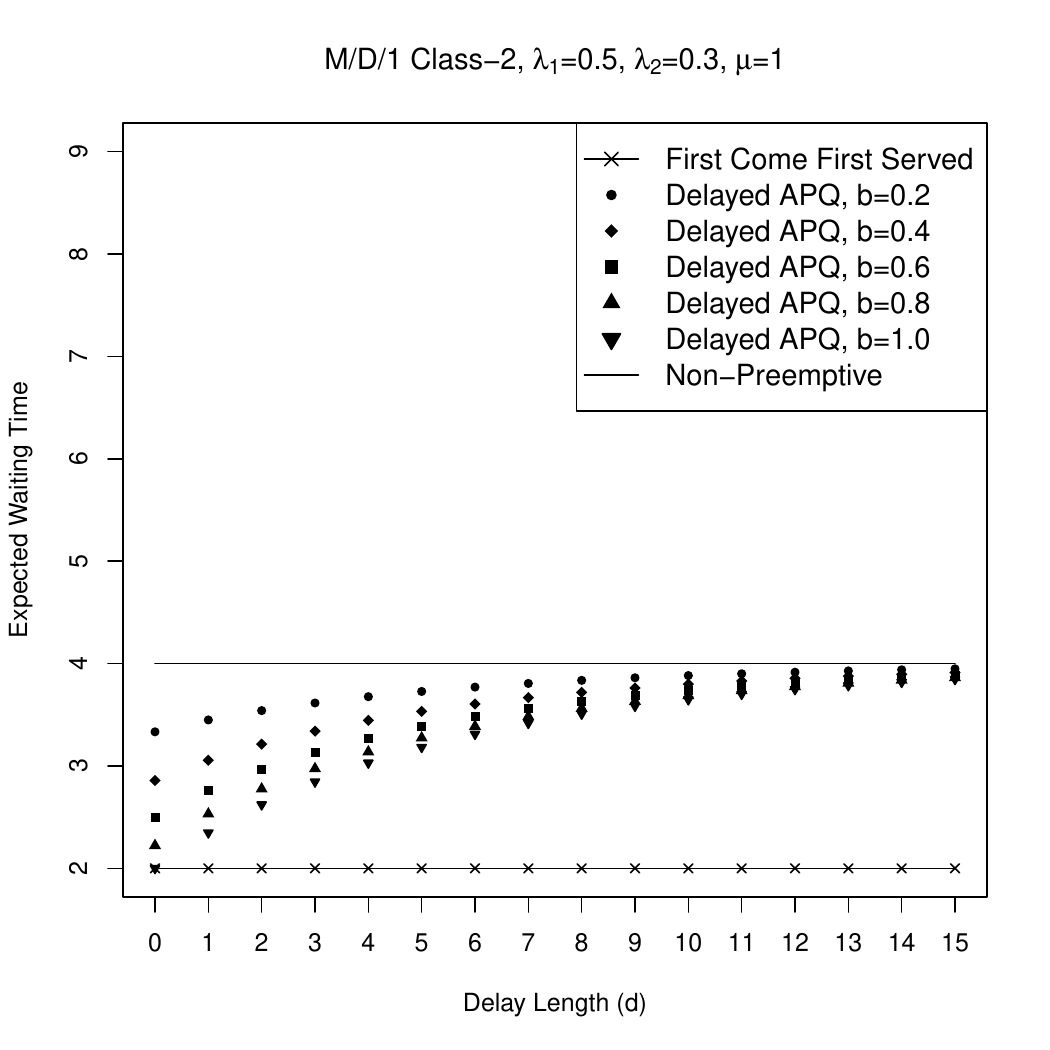}
  }
  \caption{The effect of delay length $d$ on expected waiting time for the M/D/1 \dapqname{} when $\rho = 0.8$.}
  \label{fig:md1_dvals}
\end{figure}}

\infor{
\begin{figure}[htb]
  \centering
  \subfloat{
    \includegraphics[width=70mm, height=55mm]{plots/MM1_dvals_class1.pdf}
  }
  \subfloat{
    \includegraphics[width=70mm, height=55mm]{plots/MM1_dvals_class2.pdf}
  }
  \caption{The effect of delay length $d$ on expected waiting time for the M/M/1 \dapqname{} when $\rho = 0.8$.}
  \label{fig:mm1_dvals}
\end{figure}
\begin{figure}[htb]
  \centering
  \subfloat{
    \includegraphics[width=70mm, height=55mm]{plots/MD1_dvals_class1.pdf}
  }
  \subfloat{
    \includegraphics[width=70mm, height=55mm]{plots/MD1_dvals_class2.pdf}
  }
  \caption{The effect of delay length $d$ on expected waiting time for the M/D/1 \dapqname{} when $\rho = 0.8$.}
  \label{fig:md1_dvals}
\end{figure}}

\section{Optimizing parameters for health care KPIs}\label{sec:optimizing}

In this section, we use our algorithm for the \highname{} expected waiting time to analyze the optimal choice of parameters in a \dapqname{} to meet certain health care KPIs.
In particular, we are interested in the same KPIs studied in \citet{sharif16probability}, \citet{li19kpis}, and \citet{mojalal19dapq}. Using the Canadian Triage and Acuity Scale (CTAS), these papers define the low- and \highname{} customers within an emergency department after excluding the patients who must always be seen immediately and those who have very minor ailments (and make up a negligible proportion of emergency department patients). Then, the prescribed KPIs by \citet{bullard17ctas}, which are unchanged from \citet{bullard08ctas}, are for CTAS-3 (\classname{}-1) patients to wait less than 30 minutes, 90\% of the time, and for CTAS-4 (\classname{}-2) patients to wait less than 60 minutes, 85\% of the time. 
Note that these KPIs correspond to sample proportions since in practice they are evaluated using only observed data, but we study them in the infinite data limit, which corresponds to the actual probabilities under the stationary queueing system.

Throughout this section, we will use $\waitDparams{d}{b}$ and $\cdfDparams{d}{b}$ to explicitly denote dependence on the queueing parameters of both the waiting time and corresponding CDF.  
\citet{mojalal19dapq} incorporate the CTAS KPI by optimizing over the accumulation rate given the desired delay level. That is, given a delay level $d$, a waiting time target $w$, and a compliance probability $p$, they solve for
\[\label{eqn:mojalal-objective}
  b^*(d) = \min \left\{b : b \in [0,1], \cdfDparams{d}{b}_2(w) \geq p \right\}.
\]
The KPI example they explicitly use is $w=4$ and $p = 0.8$, which in the context of a mean service length being 15 minutes \citep{dreyer09workload} corresponds to the smallest accumulation rate for a given delay level such that at least 80\% of CTAS-4 patients wait less than one hour. However, this approach requires one to fix the delay level, and it is unclear what the optimal way to do so is. Fortunately, the additional information of the \classname{}-1 expected waiting time allows us to extend this analysis by optimizing for both $d$ and $b$ together.

Our optimization objective, given $\lam_1, \lam_2 \in (0,1)$ (taking $\mu=1$ for simplicity, and supposing $\lam_1+\lam_2<1$ to ensure the queue is stable), is to choose $d$ and $b$ that minimize the \classname{}-1 expected waiting time subject to the \classname{}-2 constraint of a waiting time target and compliance probability. 
Specifically, we aim to find
\[\label{eqn:main-objective}
  (d^*, b^*) = \argmin \{\EE \waitDparams{d}{b}_1 : d \in [0,\infty], b \in [0,1], \cdfDparams{d}{b}_2(w) \geq p \}.
\]

To do so, we first identify which pairs $(\lam_1, \lam_2)$ have a nontrivial solution to \cref{eqn:main-objective}.
In Figure~9 of \citet{mojalal19dapq}, the authors observe that the range of $d$ values with $b^*(d) < 1$ is quite small for their KPI at various occupancy levels. In \cref{fig:mm1_feasible_bound}, we complete this observation by identifying all values of $\lam_1$ and $\lam_2$ where the optimal pair $(d^*, b^*)$ exists and does not correspond to $d^*=\infty$ or $b^*=0$ for various KPI parameters. 
We refer to this set of values for $(\lam_1, \lam_2)$ as the \emph{feasible region}.
This simplifies the problem by reducing the number of optimizations we need to perform, and demonstrates the restrictive nature of the CTAS KPIs, since most real emergency departments operate at high total levels of occupancy.
 
To find the feasible region, we use two observations.
First, recall that the \npqname{} regime leads to the lowest \classname{}-1 waiting times, so if the constraint $\cdfD_2(w) \geq p$ \emph{can} be achieved by the \npqname{} then there is no further optimization to be done.
Second, the \fcfsname{} regime uniformly results in the lowest \classname{}-2 waiting times, so if the constraint $\cdfD_2(w) \geq p$ \emph{cannot} be achieved by the \fcfsname{} then the occupancy is simply too high for the KPI to be met.
To visualize this, in \cref{fig:mm1_feasible_bound} we plot the lower boundary of when the \npqname{} is strong enough and the upper boundary of when the \fcfsname{} is too weak to achieve the KPI for \classname{}-2 customers for both one hour (left panel) and half hour (right panel) waiting time targets with various compliance probabilities. 
The interpretation of these plots is that for each KPI probability level, the $(\lam_1, \lam_2)$ pairs to the left of the feasible region have sufficiently small occupancy such that the KPI can be achieved by \classname{}-2 customers even under the penalizing \npqname{}, while to the right of the feasible region it is impossible to meet the KPI. Thus, the $(\lam_1, \lam_2)$ pairs that require further optimization are only those that fall within this feasible region.

\preprint{
\begin{figure}[htb]
  \centering
  \subfloat{
    \includegraphics[width=80mm, height=70mm]{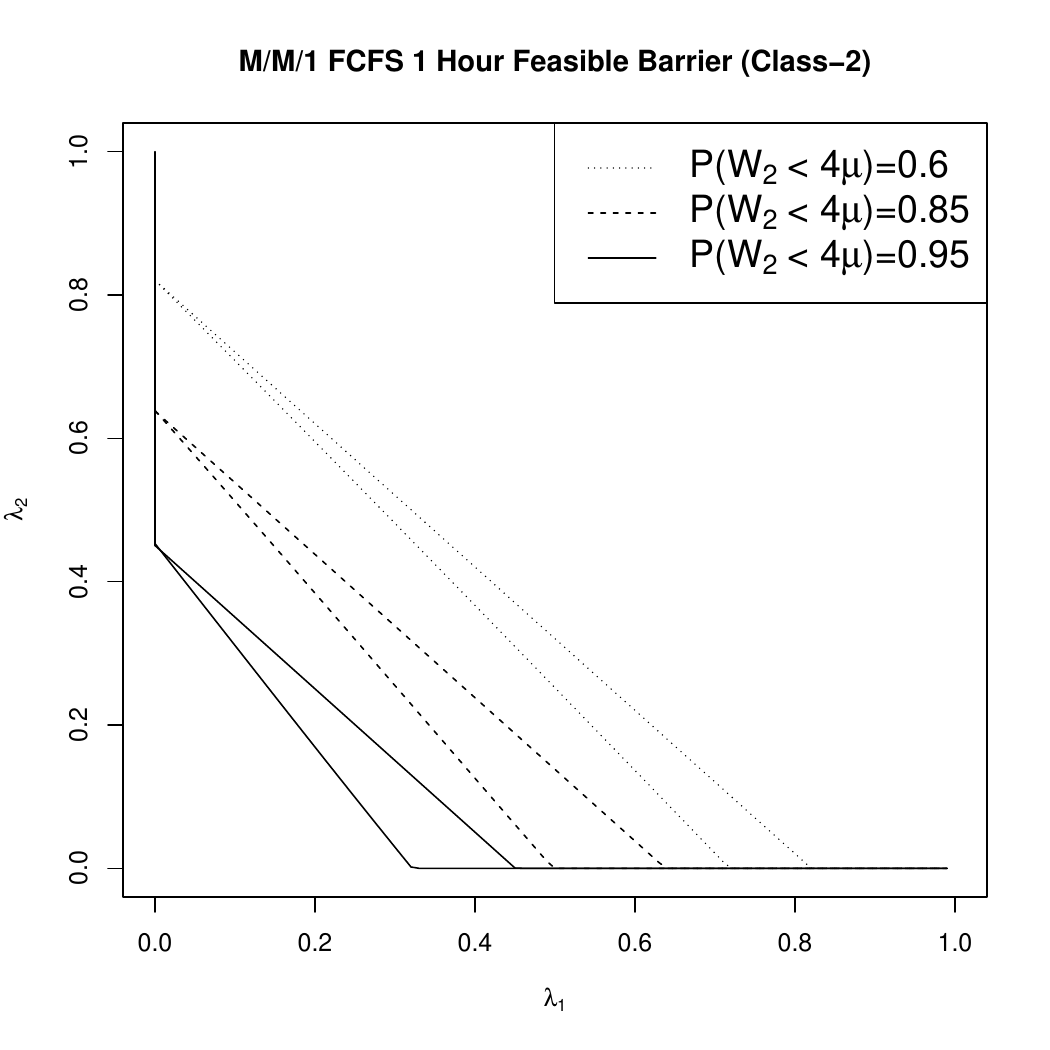}
  }
  \subfloat{
    \includegraphics[width=80mm, height=70mm]{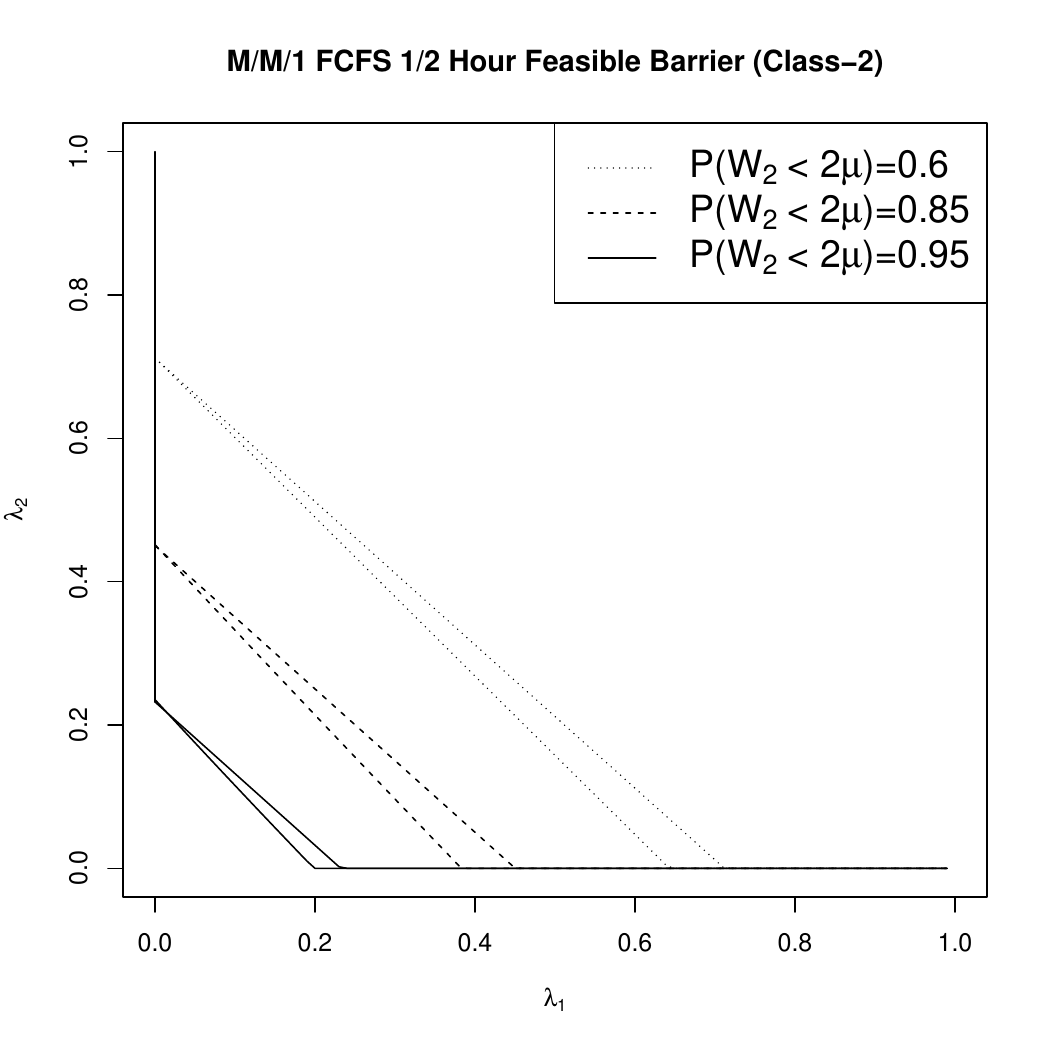}
  }
  \caption{Upper and lower boundaries for $(\lam_1, \lam_2)$ pairs that meet KPI probability for \classname{}-2 waiting time under one hour and half hour and require optimization of $d$.}
  \label{fig:mm1_feasible_bound}
\end{figure}}
\infor{
\begin{figure}[htb]
  \centering
  \subfloat{
    \includegraphics[width=70mm, height=55mm]{plots/MM1_feasible_bound_4.pdf}
  }
  \subfloat{
    \includegraphics[width=70mm, height=55mm]{plots/MM1_feasible_bound_2.pdf}
  }
  \caption{Upper and lower boundaries for $(\lam_1, \lam_2)$ pairs that meet KPI probability for \classname{}-2 waiting time under one hour and half hour and require optimization of $d$.}
  \label{fig:mm1_feasible_bound}
\end{figure}}

Now, for each $(\lam_1, \lam_2)$ pair within the feasible region, there are multiple $(d,b)$ pairs that can be chosen to ensure that \classname{}-2 customers meet the required KPI. To find $(d^*, b^*)$ from these, we observe that for each fixed $d$ value, $\EE \waitDparams{d}{b}_1$ is monotonically increasing with $b$. 
This observation follows by decomposing expected waiting time into the expected number of customers that will be served ahead of a tagged customer multiplied by the expected service length of each of these customers. 
Since $b$ does not affect the number of \classname{}-1 customers served ahead of a tagged \classname{}-1 customer, and an increase in $b$ increases the amount of priority each \classname{}-2 customer has (and hence the number that will be served first), this relationship holds for any arrival and service distributions.
The implication of this observation is that $(d^*, b^*) = (d^*,b^*(d^*))$, and hence \cref{eqn:main-objective} can be reduced to two easier, univariate optimizations.

Next, suppose $(d,b)$ is such that $\cdfDparams{d}{b}_2(w) > p$. Then one can always either increase $d$ or decrease $b$ in order to simultaneously decrease $\EE \waitDparams{d}{b}_1$ and $\cdfDparams{d}{b}_2(w)$.
Thus, the constraint will always be active; that is, $\cdfDparams{d^*}{b^*}_2(w) = p$.
Combined with the argument of the previous paragraph, we have reduced the problem to finding the optimal $d$ out of those for which $\cdfDparams{d}{b^*(d)}_2(w) = p$.
To determine which $d$ value is optimal to choose, we turn to our contribution of the expected \classname{}-1 waiting time, performing a univariate optimization over these $d$ values to determine which $d$ minimizes $\EE \waitDparams{d}{b^*(d)}_1$.
Note that there is no guarantee $\EE \waitDparams{d}{b^*(d)}_1$ is convex as a function of $d$. However, since $(\lam_1, \lam_2)$ is in the feasible region, there is a maximal value of $d$ for which it is possible to achieve $\cdfDparams{d}{b^*(d)}_2(w) \geq p$, and hence we can perform a global univariate optimization.
We formalize the actual computation steps for this procedure in Algorithm~\ref{algo:optimization}.

\begin{algorithm}
  \SetAlgoLined
  \SetKwInput{KwInput}{Inputs}
  \SetKwInput{KwReturn}{Return}
  \KwInput{\\
  $\bullet$ $(\lam_1, \lam_2)$ in the feasible region for KPI determined by $w$ and $p$\;
  $\bullet$ a function $\texttt{argmin} : (f,(a,b))\in(\R\to\R)\times\R^2\to \R$ that returns an $x \in [a,b]$ achieving the global minimum of $f(x)$ on the interval $[a,b]$\;
  $\bullet$ a function $\texttt{root} : (f,(a,b))\in(\R\to\R)\times\R^2\to \R$ that returns an $x \in [a,b]$ with $f(x) = 0$ on the interval $[a,b]$ whenever one exists.}
  \KwResult{Optimal queueing parameters $(d^*,b^*)$ that solve \cref{eqn:main-objective}.}
  \textbackslash* 
  \emph{Find the largest $d$ value such that the KPI can be achieved by some $b<1$} 
  \hfill *\textbackslash \\
  \textbackslash* 
  \emph{In practice, $d_{\max}<5$, so one need not start with \texttt{MAX_FLOAT} for the range} 
  \hfill *\textbackslash \\
  1. Find $d_{\max} = \texttt{root}(d \mapsto \cdfDparams{d}{b=1}_2(w) - p, (0,\texttt{MAX_FLOAT}))$.\\
  \textbackslash* 
  \emph{Define the function to obtain $b^*(d)$}
  \hfill *\textbackslash \\
  2. Define $[0,d_{\max}] \ni d \mapsto b^*(d) = \texttt{root}(b \mapsto \cdfDparams{d}{b}_2(w) - p, (0,1))$.\\
  \textbackslash* 
  \emph{Perform the optimization to solve for $d^*$ using the $b^*(d)$ function} 
  \hfill *\textbackslash \\
  3. Find $d^* = \texttt{argmin}(d \mapsto \EE \waitDparams{d}{b^*(d)}_1, (0,d_{\max}))$.\\
  \KwReturn{$(d^*, b^*(d^*))$}
\caption{Optimization to find $(d^*, b^*)$}
\label{algo:optimization}
\end{algorithm}

To illustrate our approach, we use \cref{fig:mm1_kpi_examples_bvals,fig:mm1_kpi_examples}, which focus on the middle triangle of the left panel of \cref{fig:mm1_feasible_bound}, corresponding to the KPI of $\PP(\waitD_2 < 4) \geq 0.85$. In both figures, each of the three panels correspond to different $(\lam_1,\lam_2)$ pairs that lie in the triangle, with the $x$-axis enumerating the values of $d$ for which $b^*(d)<1$ (that is, $d \in [0,d_{\max}]$ as defined in Algorithm~\ref{algo:optimization}). In \cref{fig:mm1_kpi_examples_bvals}, the $y$-axis plots $b^*(d)$, while in \cref{fig:mm1_kpi_examples}, the $y$-axis plots $\EE \waitDparams{d}{b^*(d)}_1$.
Concretely, Step~3 of Algorithm~\ref{algo:optimization} corresponds to finding the $d$ that is the $\argmin$ of the $y$-axis in \cref{fig:mm1_kpi_examples}.

What we find in each of the panels is quite surprising, since as $d$ increases, $b^*(d)$ increases so much that the net effect is to increase the \classname{}-1 expected waiting time. This suggests that for the purpose of meeting \classname{}-2 KPIs while optimizing \classname{}-1 expected waiting time, one should not prefer a \dapqname{} over a standard \apqname{}. Furthermore, moving left to right through the panels reveals that the detrimental effect of increasing the delay level on the \classname{}-1 expected waiting time becomes more severe as $\lam_2$ controls a high proportion of occupancy. Finally, we note that while these figures only address specific KPI examples and the M/M/1 case, further numerical investigation showed the same conclusions hold for M/D/1 and other KPI levels.

\preprint{
\begin{figure}[htb]
  \centering
  \subfloat{
    \includegraphics[width=52mm, height=55mm]{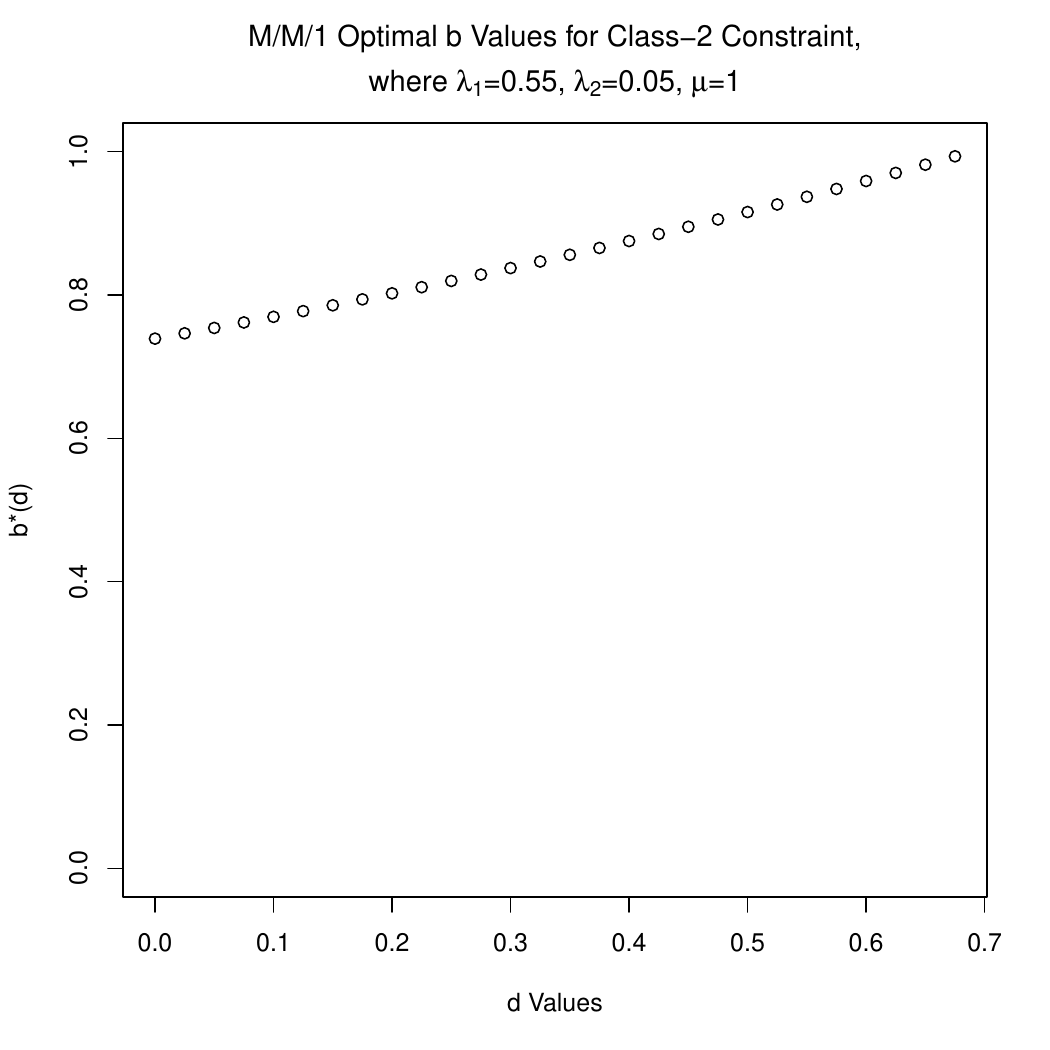}
  }
  \subfloat{
    \includegraphics[width=52mm, height=55mm]{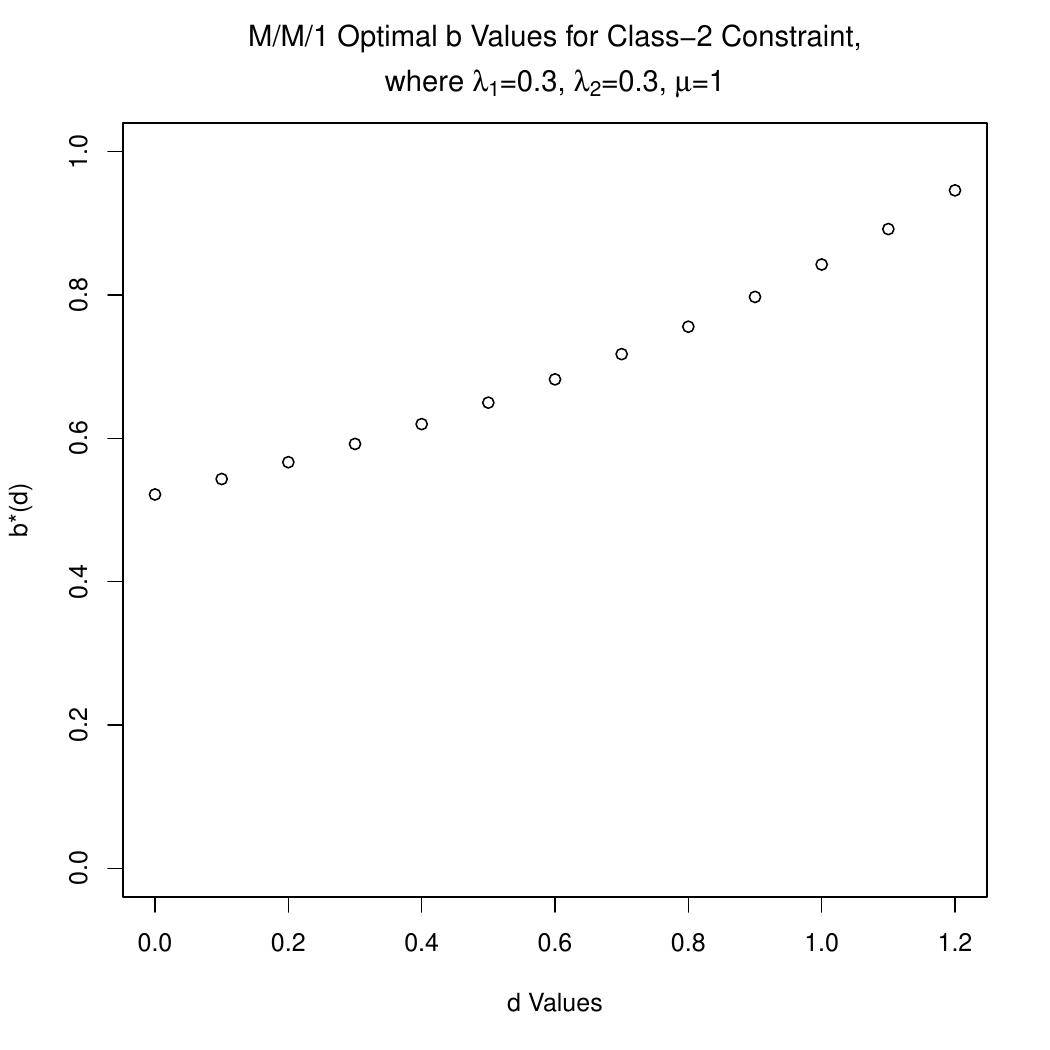}
  }
  \subfloat{
    \includegraphics[width=52mm, height=55mm]{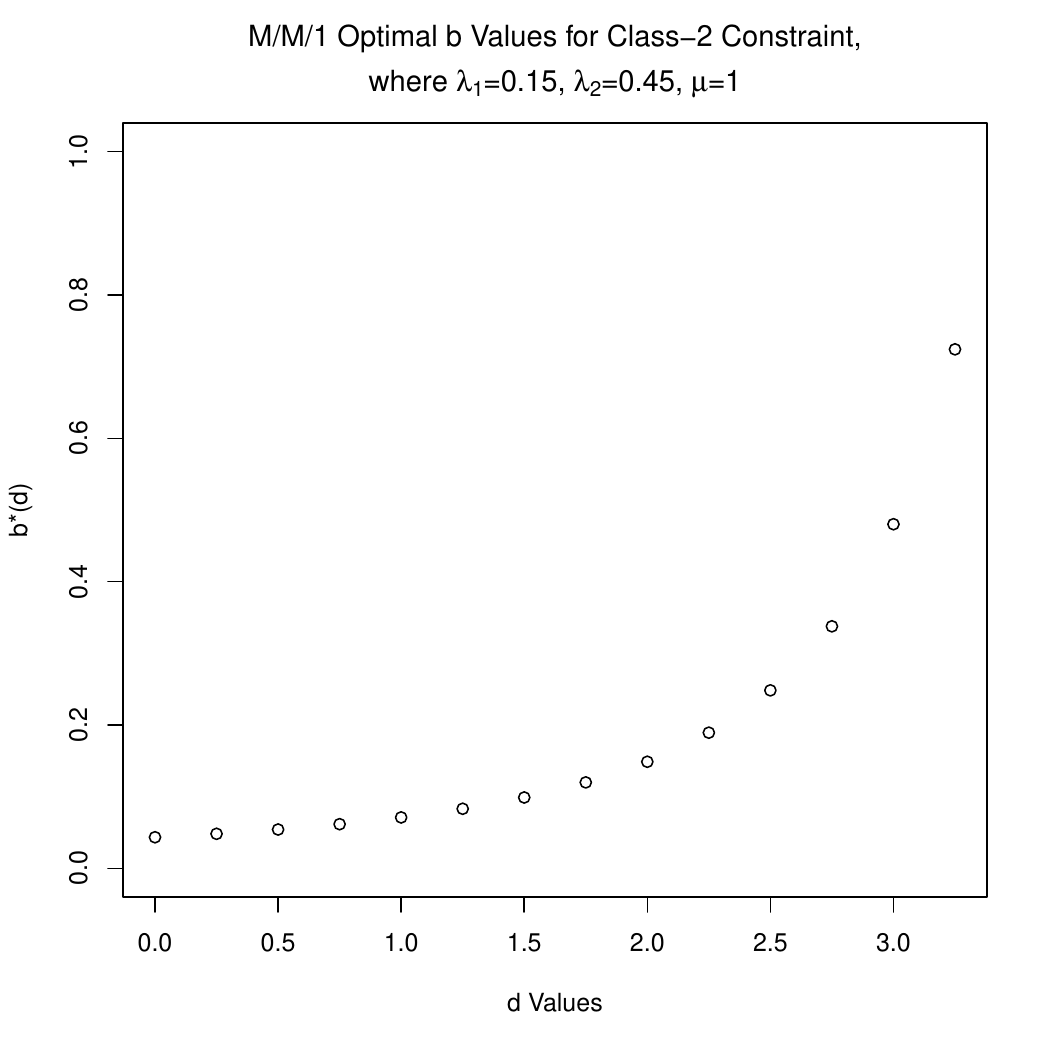}
  }
  \caption{Optimal $b^*(d)$ for delay level $d$ at various occupancy levels for the KPI $\PP(\waitD_2 < 4) \geq 0.85$.}
  \label{fig:mm1_kpi_examples_bvals}
\end{figure}
\begin{figure}[htb]
  \centering
  \subfloat{
    \includegraphics[width=52mm, height=55mm]{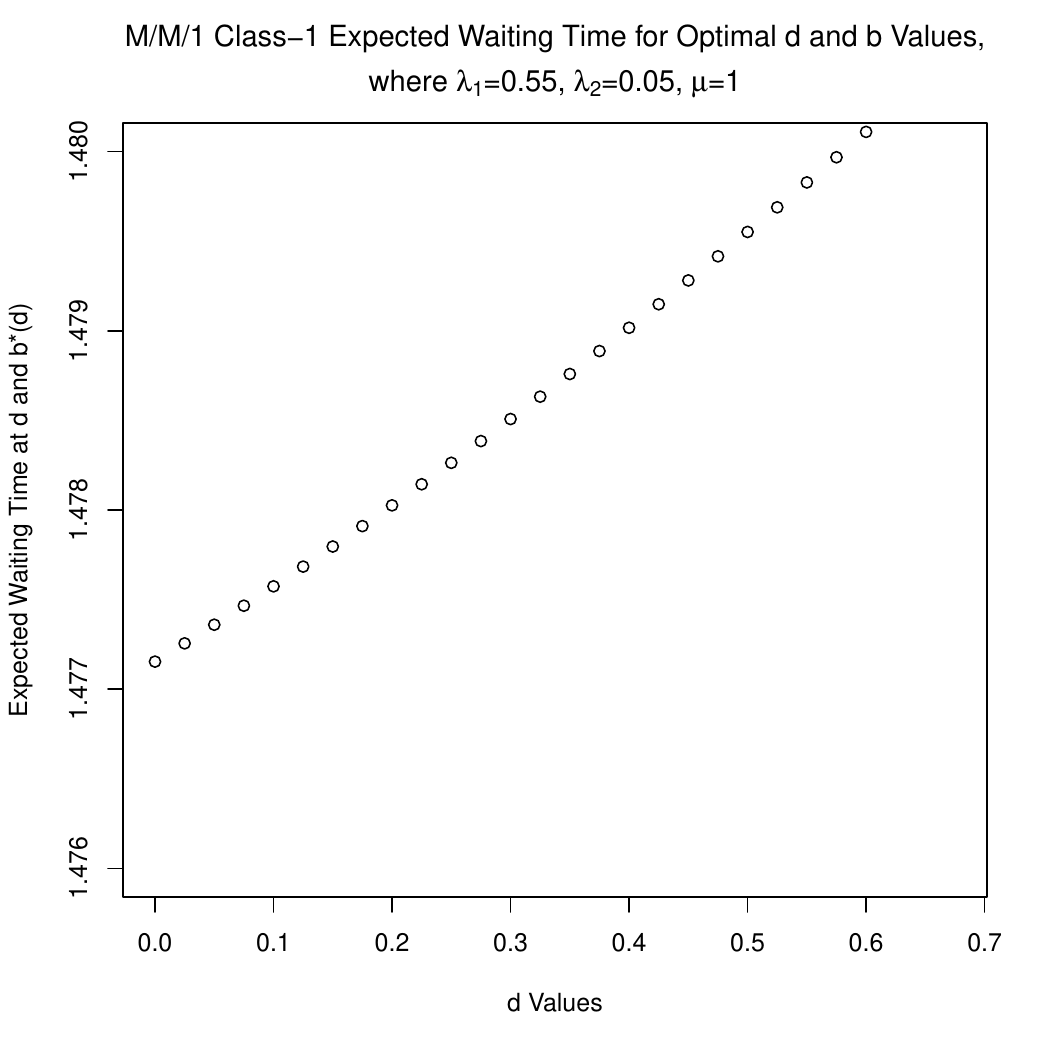}
  }
  \subfloat{
    \includegraphics[width=52mm, height=55mm]{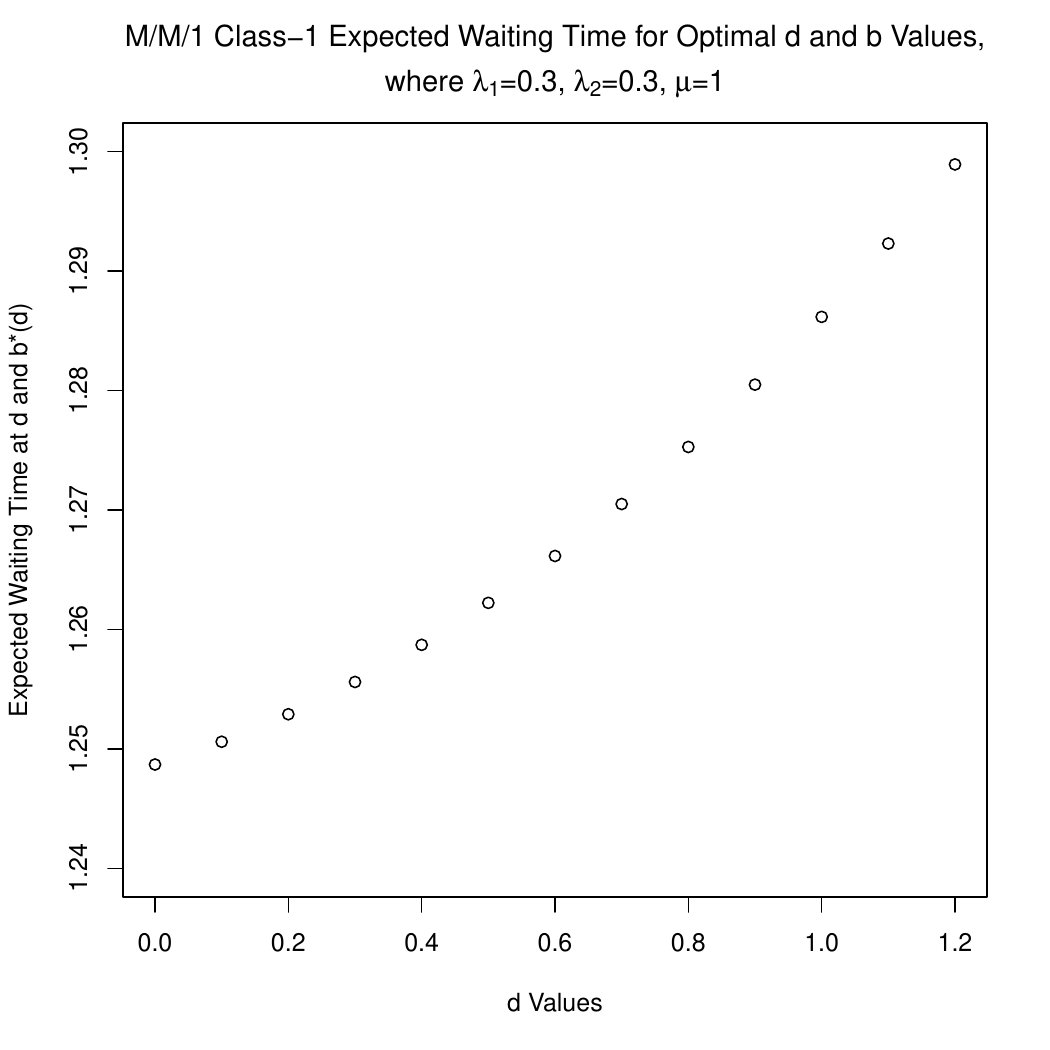}
  }
  \subfloat{
    \includegraphics[width=52mm, height=55mm]{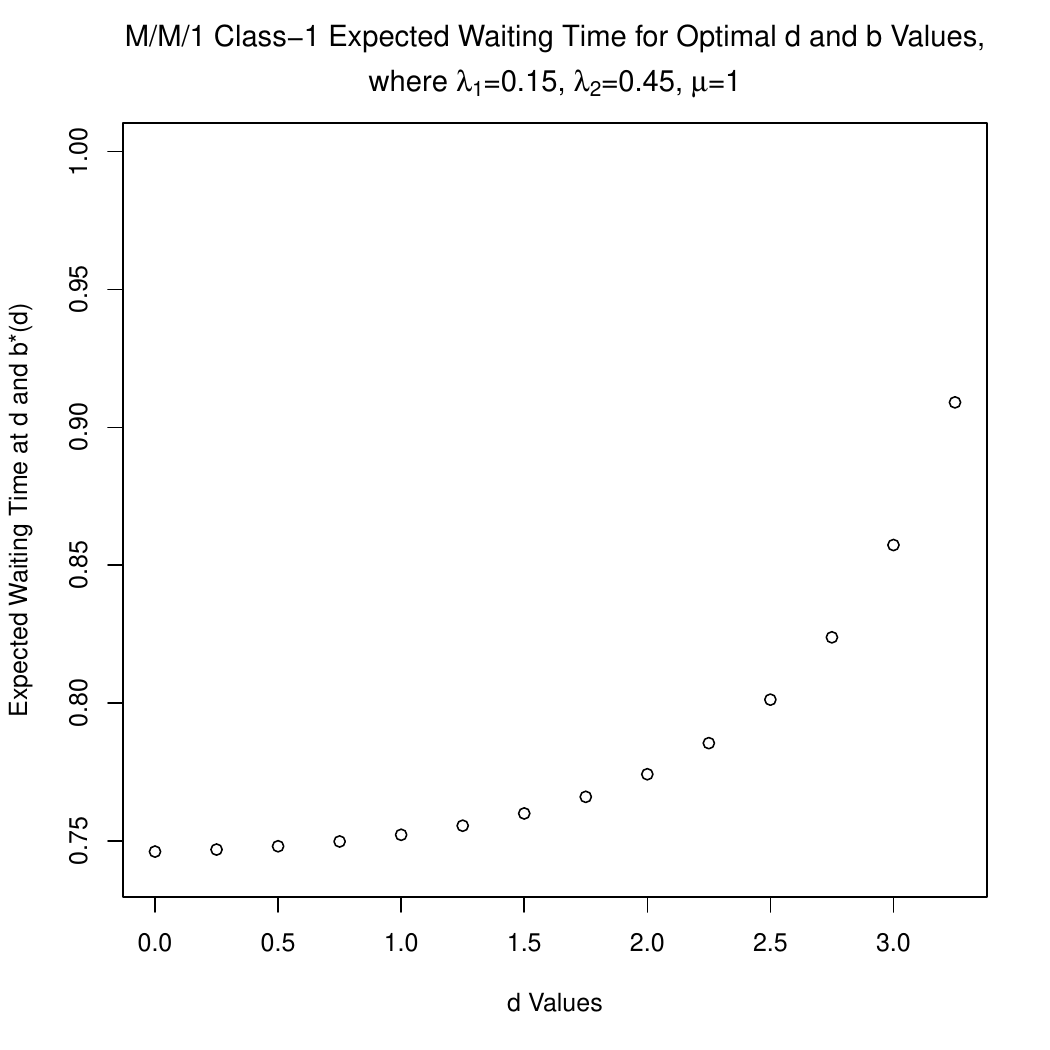}
  }
  \caption{$\EE \waitDparams{d}{b^*(d)}_1$ as a function of $d$ for the KPI $\PP(\waitD_2 < 4) \geq 0.85$.}
  \label{fig:mm1_kpi_examples}
\end{figure}}

\infor{
\begin{figure}[htb]
  \centering
  \subfloat{
    \includegraphics[width=45mm, height=45mm]{plots/MM1_kpi_example_A_bvals.pdf}
  }
  \subfloat{
    \includegraphics[width=45mm, height=45mm]{plots/MM1_kpi_example_B_bvals.pdf}
  }
  \subfloat{
    \includegraphics[width=45mm, height=45mm]{plots/MM1_kpi_example_C_bvals.pdf}
  }
  \caption{Optimal $b^*(d)$ for delay level $d$ at various occupancy levels for the KPI $\PP(\waitD_2 < 4) \geq 0.85$.}
  \label{fig:mm1_kpi_examples_bvals}
\end{figure}
\begin{figure}[htb]
  \centering
  \subfloat{
    \includegraphics[width=45mm, height=45mm]{plots/MM1_kpi_example_A.pdf}
  }
  \subfloat{
    \includegraphics[width=45mm, height=45mm]{plots/MM1_kpi_example_B.pdf}
  }
  \subfloat{
    \includegraphics[width=45mm, height=45mm]{plots/MM1_kpi_example_C.pdf}
  }
  \caption{$\EE \waitDparams{d}{b^*(d)}_1$ as a function of $d$ for the KPI $\PP(\waitD_2 < 4) \geq 0.85$.}
  \label{fig:mm1_kpi_examples}
\end{figure}}

\section{Exponential approximations of \classname{}-1 waiting times} \label{sec:approximation}

Beyond computing expected waiting times for \classname{}-1 customers, it is of great interest to characterize the entire waiting time distribution.
Currently, the theoretical tools available are insufficient for capturing the recursive dependence structure inherent to the \dapqname{}, which differs from the \apqname{} primarily by having \classname{}-1 customers experience different accreditation rates depending on the status of the queue, which breaks the necessary independence assumptions used in the analysis of the latter.
Instead, we turn to finding an \emph{analytic approximation} of the \classname{}-1 waiting time, which is a well-studied strategy for earlier versions of queues, but was previously inapplicable without the summary statistic computation we provide in this work.
In particular, we employ a zero-inflated exponential approximation, and measure its quality via both exact numerical validation and simulation procedures. 
We note that the purpose of this work is to identify computational procedures for summary statistics of the \dapqname{} that \emph{do not} require simulation, however, we still feel it is a reasonable tool to use for validation and justification purposes.
We focus on an exponential approximation in the M/M/1 case for simplicity, although based on the related work, it is plausible that the approximation would also be suitable in the M/G/1 case.

The driving motivation towards using an exponential approximation is that the \classname{}-1 waiting time in both the \apqname{} and the \dapqname{} interpolates between the FCFS (longest waiting times) and the \npqname{} (shortest waiting times). At both of these end points, for an M/M/1 queue, \classname{}-1 waiting times are distributed according to a zero-inflated exponential. We denote such a random variable by $Z \sim \zexp(\rho, \alpha)$, which for $\rho,\alpha>0$ has CDF defined for $t \geq 0$ by
\[\label{eqn:zexp-def}
	\PP(Z \leq t) = 1 - \rho e^{-\alpha t}.
\]

It is easy to see from \cref{eqn:zexp-def} that $\PP(Z = 0) = 1 - \rho$. Fortunately, we know that $\rho = \lam/\mu$, so there is only one parameter to optimize. Then, by integrating $\PP(Z > t)$, we obtain $\alpha = \rho / \EE Z$. In other words, if we want $Z = \wait_1$, we can define the zero-inflated exponential approximation with only the occupancy ratio $\rho$ and the expected waiting time $\EE \wait_1$. Fortunately, this is \emph{exactly} what we have available to characterize $\waitD_1$.

Approximating queueing dynamics using exponential models has a long history in the literature. Most relevant to this work is \citet{abate95approx}, who also approximate waiting times with a zero-inflated exponential in the M/G/1 queue. See the references therein for other historical uses of various exponential approximations.
Despite this lengthy literature, to the best of our knowledge, exponential approximations of queues have only ever been used to deal with intractability due to the \emph{service and arrival distributions}. 
Instead, we propose an approximation to overcome intractability due to the \emph{queueing discipline}.

\subsection{Analytical approximation error for \apqname{} ($d=0$)}

To justify our use of an exponential approximation, we first compare its accuracy in the simpler \apqname{}, where we also have access to the exact waiting time CDF in order to compare. In \cref{fig:approx_high_rho}, we plot the exact CDFs against the approximate CDFs for various levels of $\rho$. In \cref{fig:diff_high_rho}, we present this same information in another way, plotting the absolute difference between the exact CDFs and the approximate CDFs. 

The approximation seems to work well when $b$ is large, $\lam_1 > \lam_2$, or $b$ is very small. For extreme $b$ (near 0 or 1), the queueing discipline is close to what one would observe in the NPQ and FCFS cases, respectively, for which the approximation is exact.
When $\lam_1 > \lam_2$, the lower priority class is not as impactful on the service dynamics as the higher priority class, and as $\lam_2 \to 0$ the approximation once more tends towards the exact solution.
We also consider the same analysis from a different perspective in \cref{fig:diff_high_rho}, comparing the absolute difference between the true and approximate waiting time CDFs. Here we see nontrivial error for small $t$ (corresponding to 1-5 average service lengths), although when $\rho$ is large (as we expect in a health care setting), this error caps out around 5\%, and similarly the error appears to be smaller when $\lam_1 > \lam_2$.

\preprint{
\begin{figure}[htb]
  \centering
  \subfloat{
    \includegraphics[width=80mm, height=70mm]{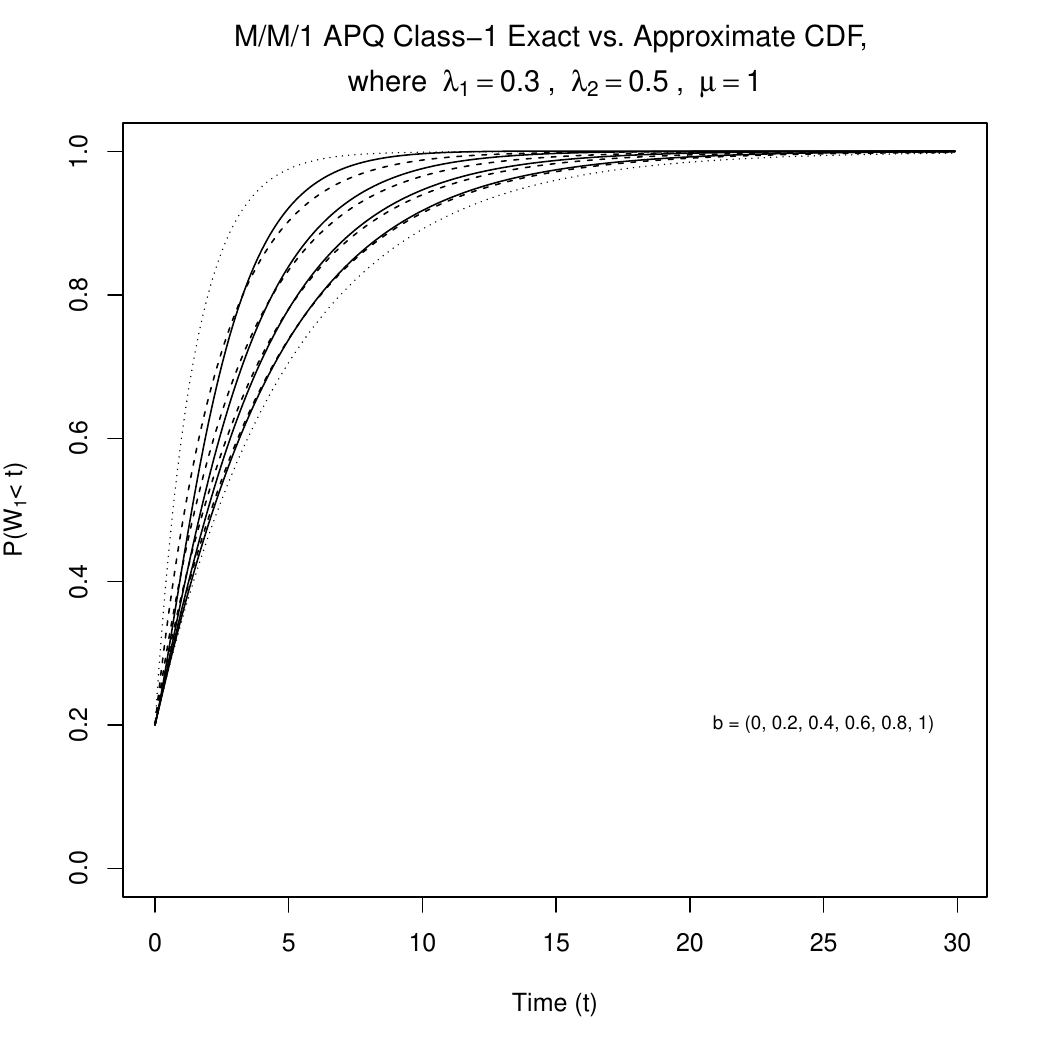}
  }
  \subfloat{
    \includegraphics[width=80mm, height=70mm]{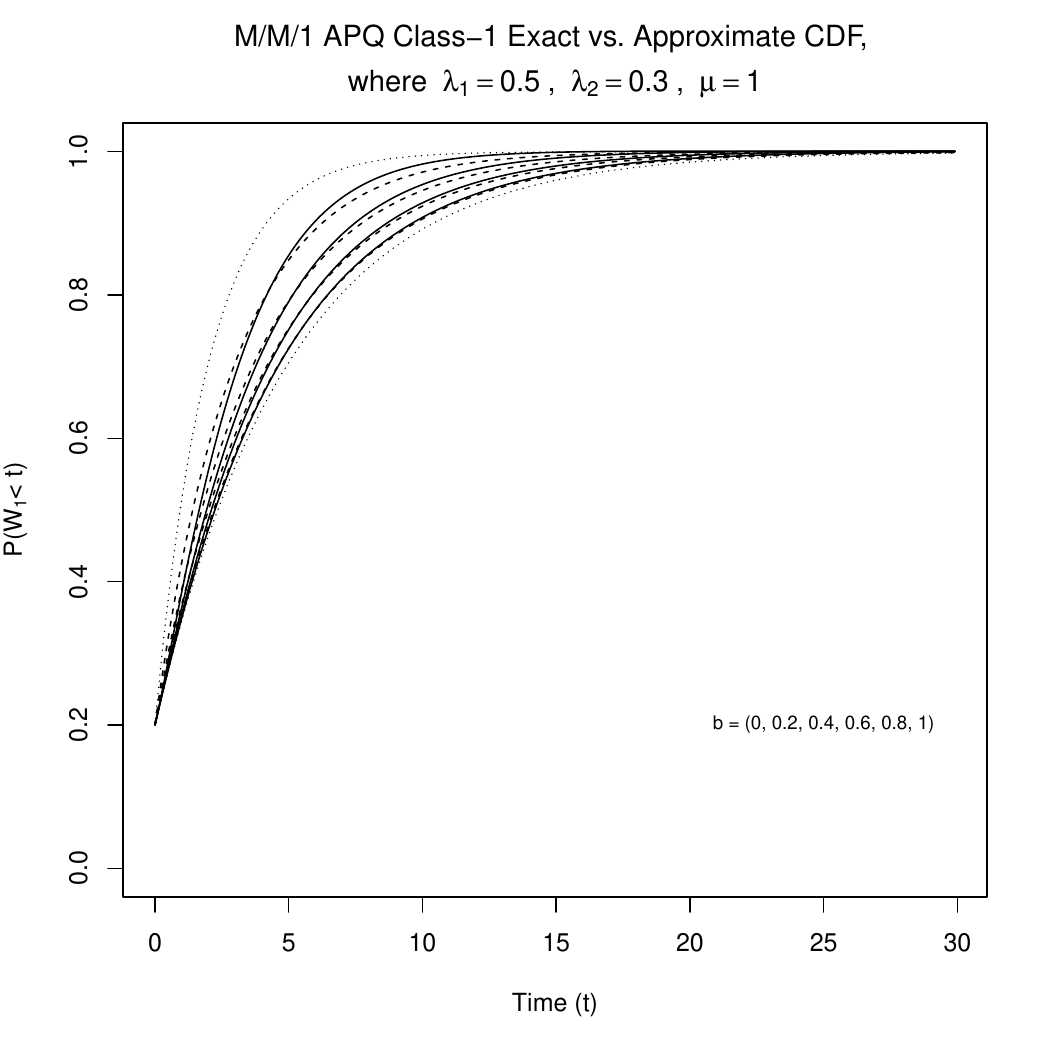}
  }
\\
  \subfloat{
    \includegraphics[width=80mm, height=70mm]{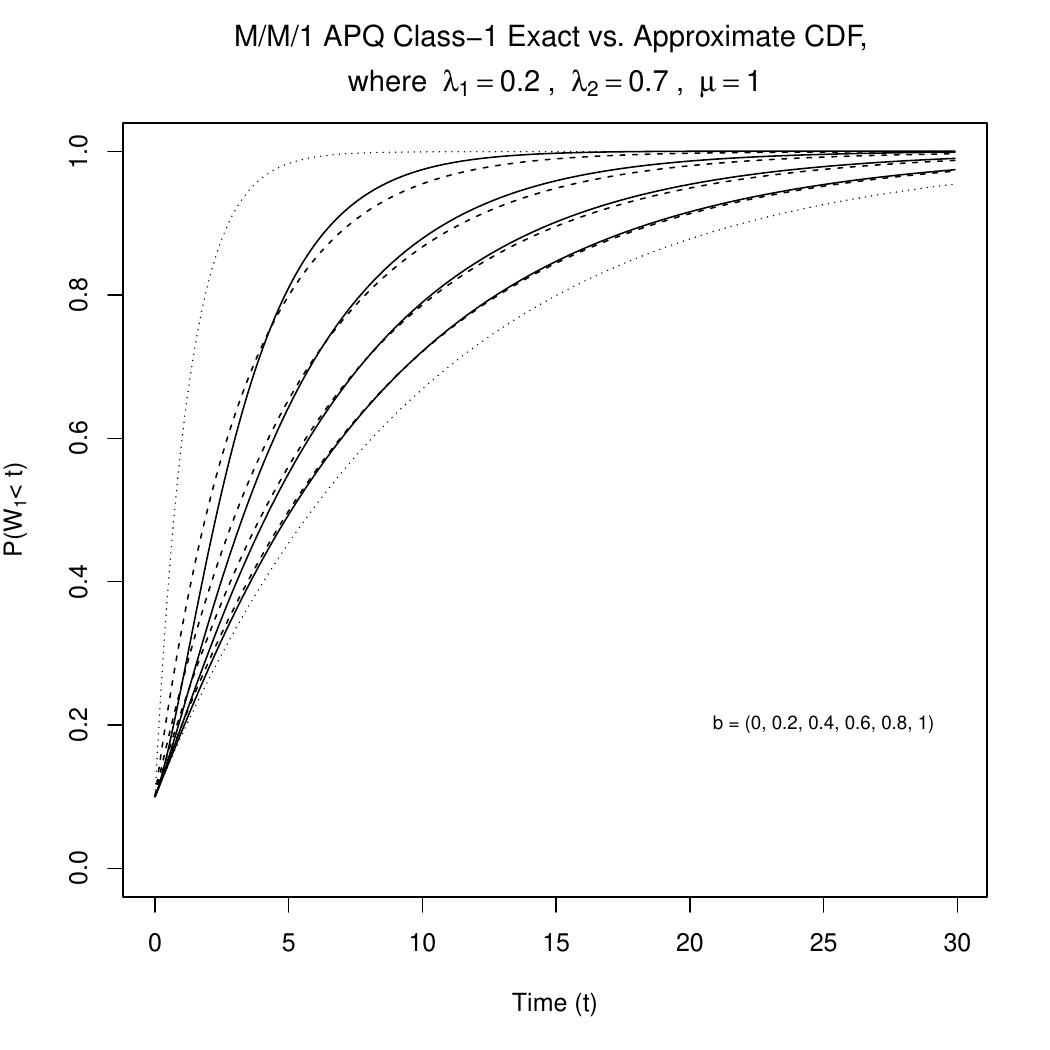}
  }
  \subfloat{
    \includegraphics[width=80mm, height=70mm]{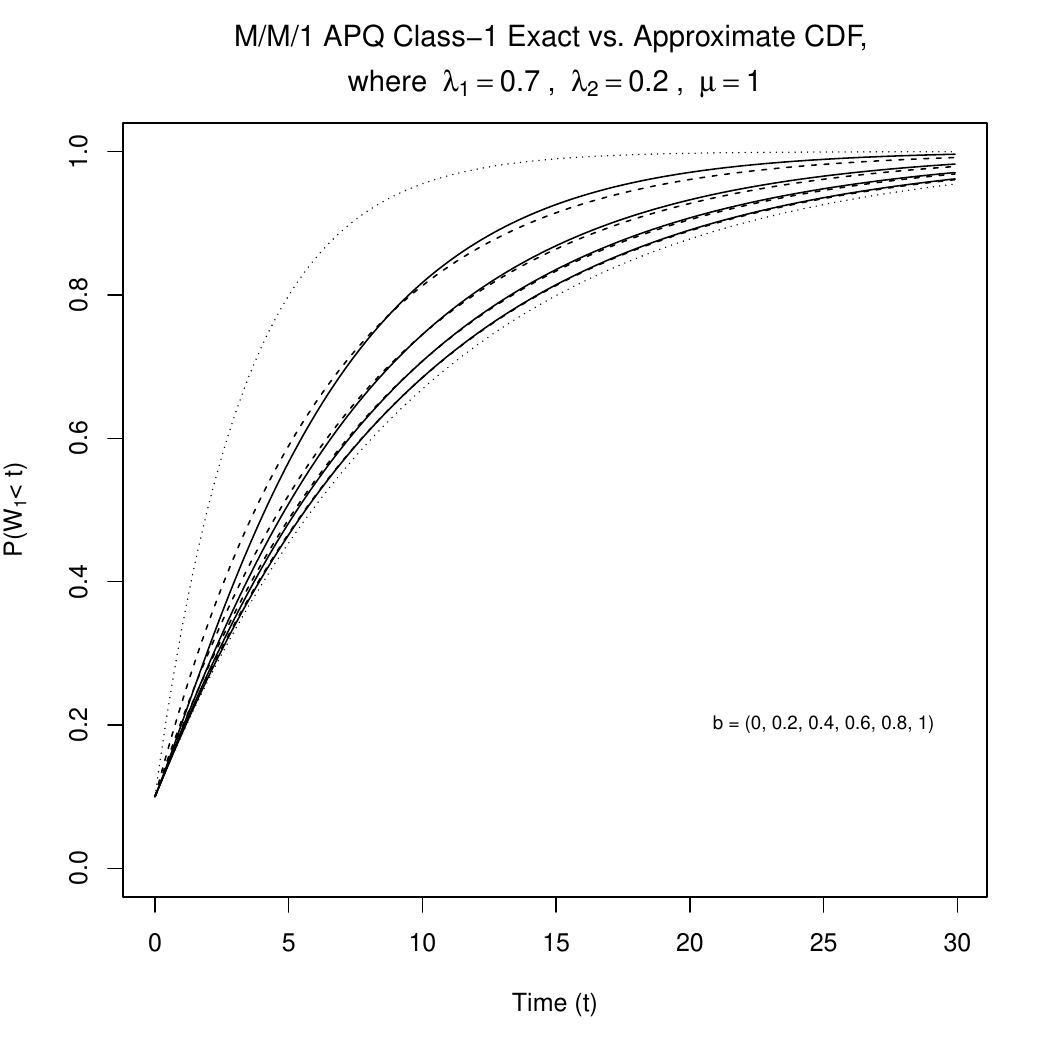}
  }
  \caption{Exact (\full) v.s.\ exponential approximations (\dashed) of \highname{} waiting time CDFs. Outer lines (\dotted) correspond to \npqname{} ($b=0$) and FCFS ($b=1$).}
  \label{fig:approx_high_rho}
\end{figure}
\begin{figure}[htb]
  \centering
  \subfloat{
    \includegraphics[width=80mm, height=70mm]{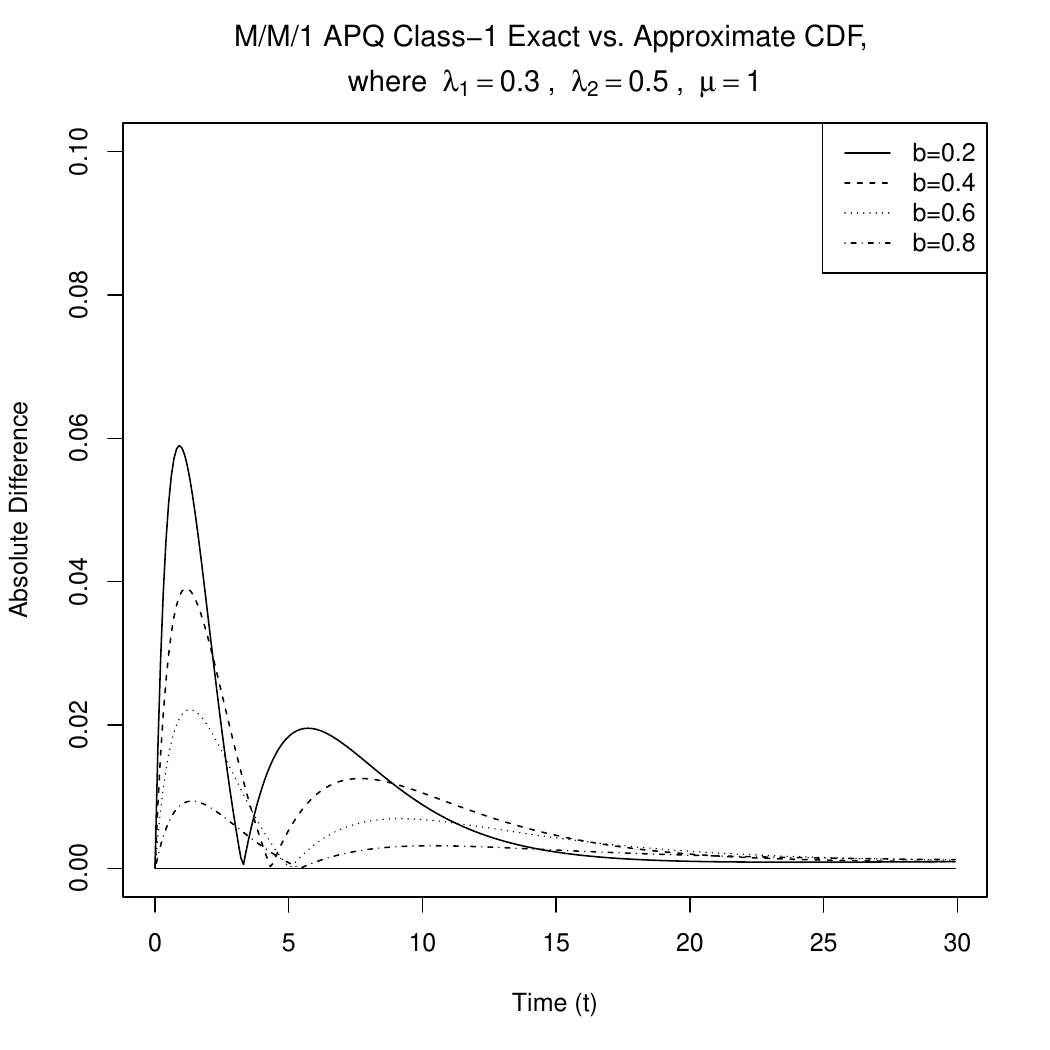}
  }
  \subfloat{
    \includegraphics[width=80mm, height=70mm]{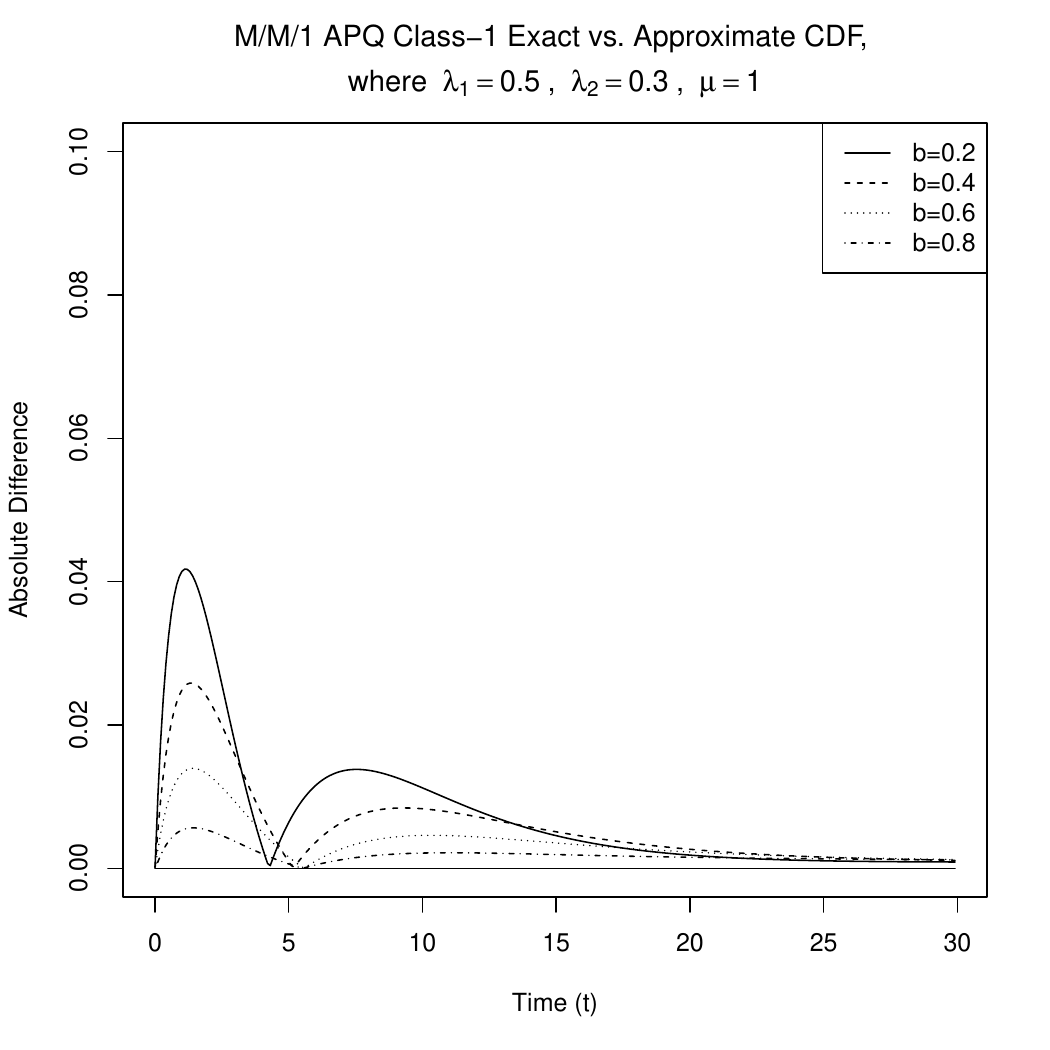}
  }
\\
  \subfloat{
    \includegraphics[width=80mm, height=70mm]{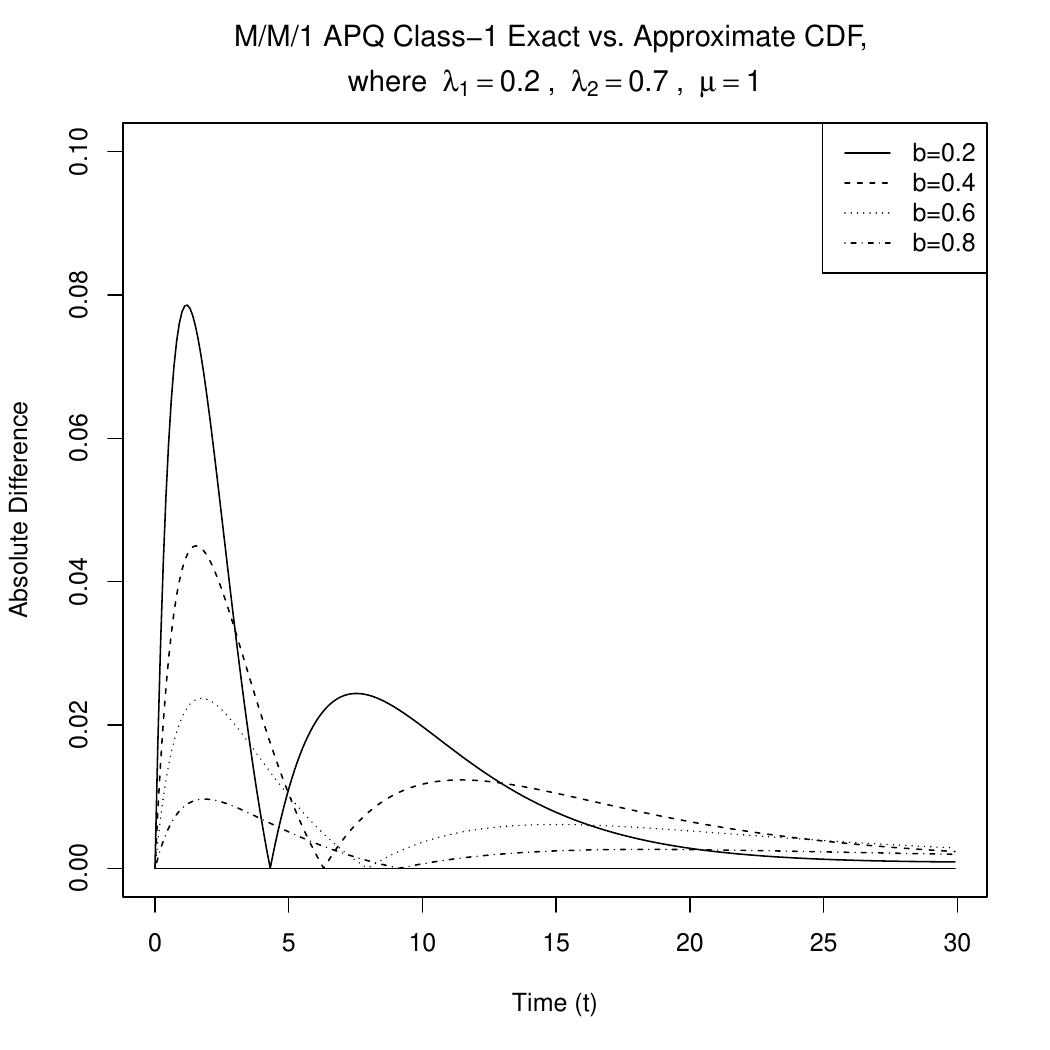}
  }
  \subfloat{
    \includegraphics[width=80mm, height=70mm]{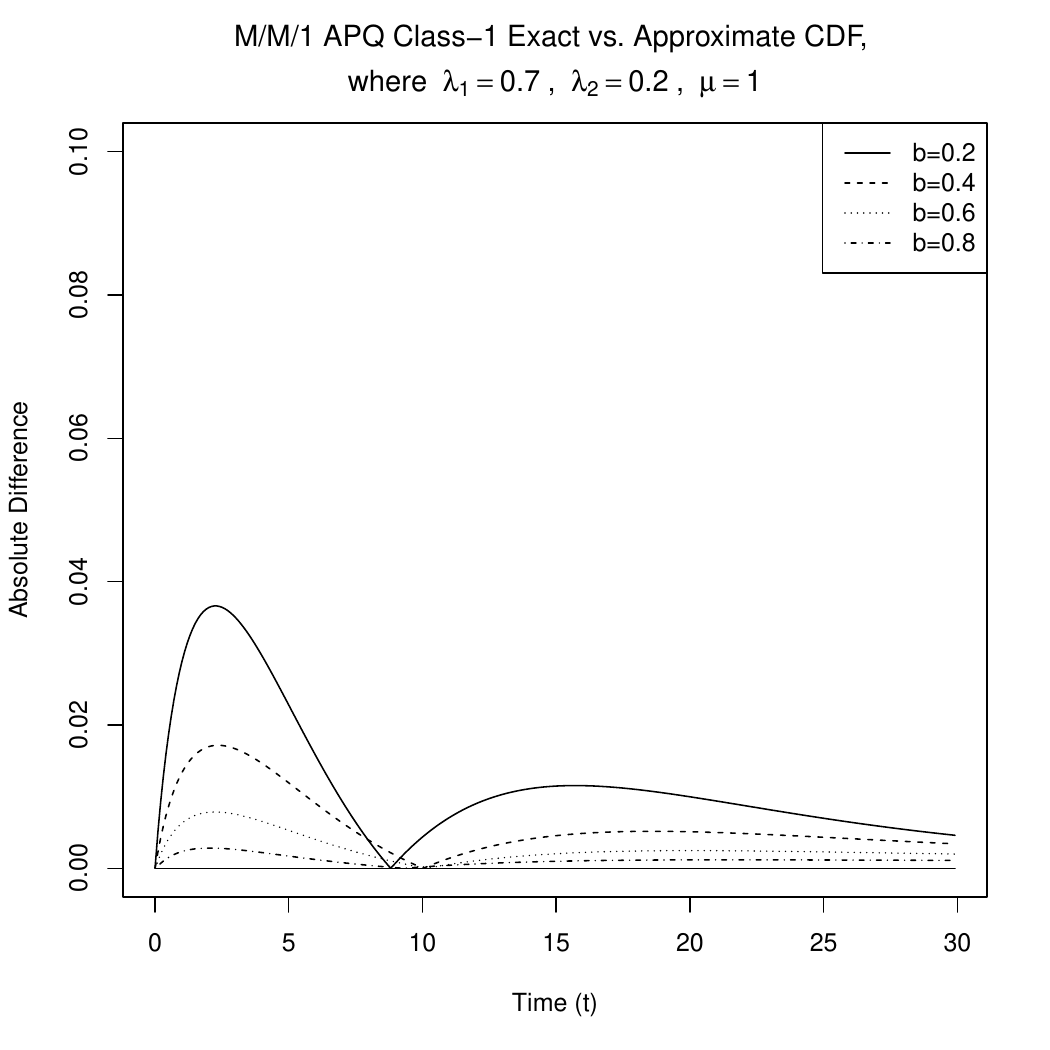}
  }
  \caption{Absolute difference between exact and exponential approximations of \highname{} waiting time CDFs.}
  \label{fig:diff_high_rho}
\end{figure}}

\infor{
\begin{figure}[htb]
  \centering
  \subfloat{
    \includegraphics[width=70mm, height=55mm]{plots/approx_lam1_0.3_lam2_0.5.pdf}
  }
  \subfloat{
    \includegraphics[width=70mm, height=55mm]{plots/approx_lam1_0.5_lam2_0.3.pdf}
  }
\\
  \subfloat{
    \includegraphics[width=70mm, height=55mm]{plots/approx_lam1_0.2_lam2_0.7.pdf}
  }
  \subfloat{
    \includegraphics[width=70mm, height=55mm]{plots/approx_lam1_0.7_lam2_0.2.pdf}
  }
  \caption{Exact (\full) v.s.\ exponential approximations (\dashed) of \highname{} waiting time CDFs. Outer lines (\dotted) correspond to \npqname{} ($b=0$) and FCFS ($b=1$).}
  \label{fig:approx_high_rho}
\end{figure}
\begin{figure}[htb]
  \centering
  \subfloat{
    \includegraphics[width=70mm, height=55mm]{plots/diff_lam1_0.3_lam2_0.5.pdf}
  }
  \subfloat{
    \includegraphics[width=70mm, height=55mm]{plots/diff_lam1_0.5_lam2_0.3.pdf}
  }
\\
  \subfloat{
    \includegraphics[width=70mm, height=55mm]{plots/diff_lam1_0.2_lam2_0.7.pdf}
  }
  \subfloat{
    \includegraphics[width=70mm, height=55mm]{plots/diff_lam1_0.7_lam2_0.2.pdf}
  }
  \caption{Absolute difference between exact and exponential approximations of \highname{} waiting time CDFs.}
  \label{fig:diff_high_rho}
\end{figure}}

\subsection{Simulated approximation error for $d>0$}

We now perform a similar analysis of the zero-inflated exponential approximation from the last section, but for the actual queueing discipline of interest (\dapqname). Since the true \classname{}-1 waiting time CDF is unknown (hence the approximation), we instead compare to the CDF computed via simulation. 
We avoid using these simulations elsewhere in the paper, as the main focus is on analytical expressions for the quantities of interest, but feel that in this section they are a warranted tool for justification.

The simulations are run using a simple Python script to brute force reproduce a \dapqname{} via discrete-event simulation (i.e., every ``customer'' is generated and explicitly moves through the queue). Each set of parameters was simulated for $n=4000$ customers following a burn-in period of $1500$ customers, from which empirical CDFs were computed, and then these empirical CDFs were averaged over $50$ runs. The averaged, empirical CDFs were compared to the known CDFs for \fcfsname{}, \apqname{}, and \dapqname{} \classname{}-2 as validation, and found numerically indistinguishable (see \cref{fig:simulated_high_rho}).

\preprint{
\begin{figure}[htb]
  \centering
  \subfloat{
    \includegraphics[width=80mm, height=70mm]{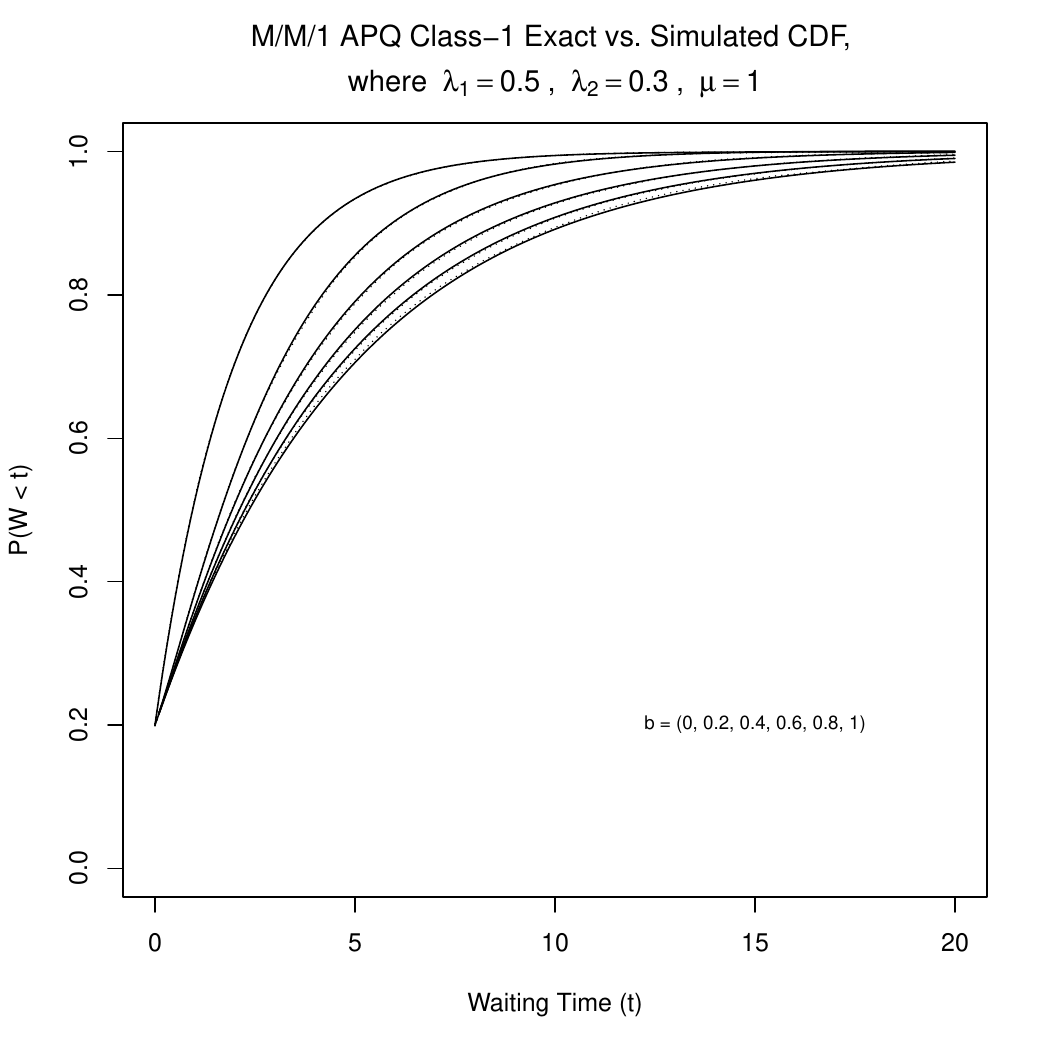}
  }
  \subfloat{
    \includegraphics[width=80mm, height=70mm]{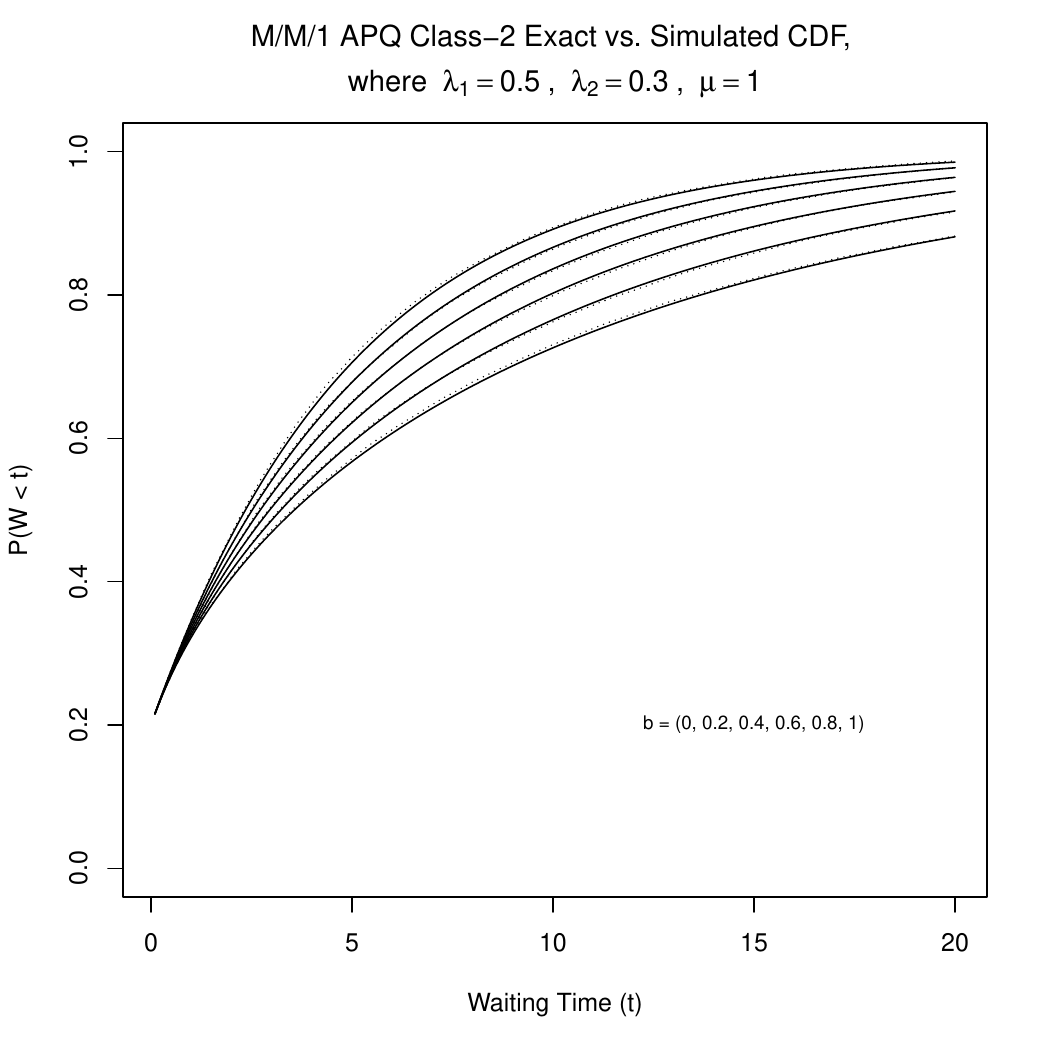}
  }
\\
  \subfloat{
    \includegraphics[width=80mm, height=70mm]{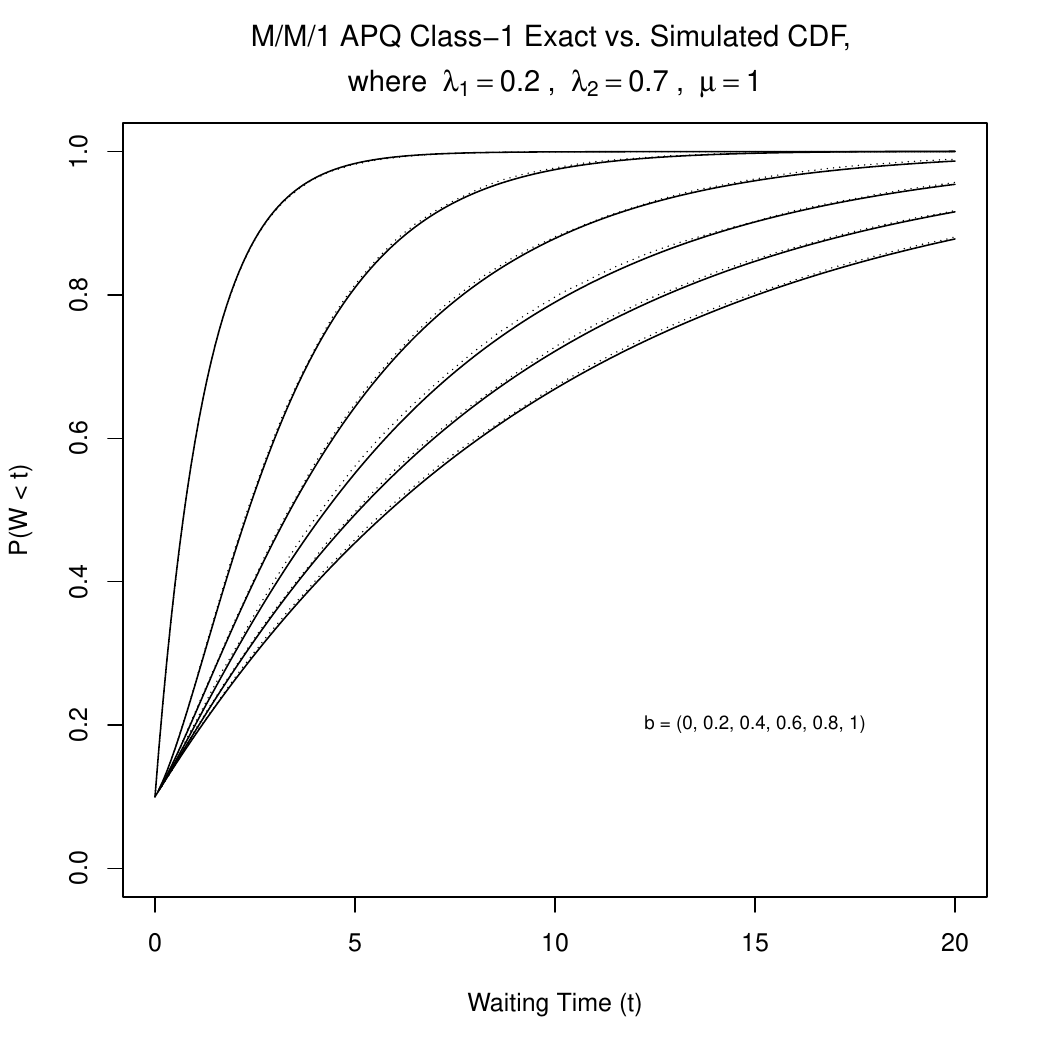}
  }
  \subfloat{
    \includegraphics[width=80mm, height=70mm]{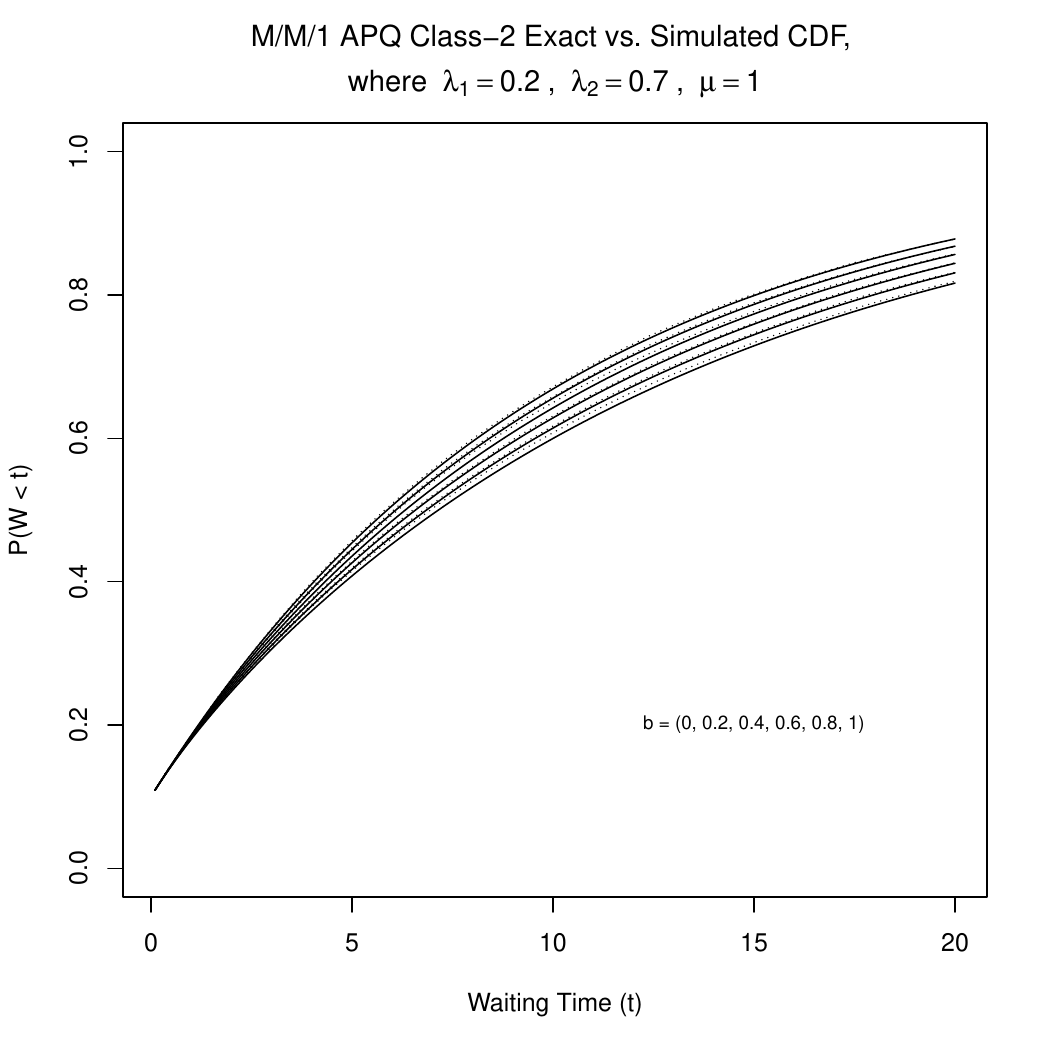}
  }
  \caption{Exact (\full) v.s.\ simulated (\dashed) waiting time CDFs. 
  For \classname{}-1, the leftmost curve is \npqname{} $(b=0)$, while for \classname{}-2, the leftmost curve is \fcfsname{} $(b=1)$.}
  \label{fig:simulated_high_rho}
\end{figure}}

\infor{
\begin{figure}[htb]
  \centering
  \subfloat{
    \includegraphics[width=70mm, height=55mm]{plots/simulated_class1_lam1_0.5_lam2_0.3.pdf}
  }
  \subfloat{
    \includegraphics[width=70mm, height=55mm]{plots/simulated_class2_lam1_0.5_lam2_0.3.pdf}
  }
\\
  \subfloat{
    \includegraphics[width=70mm, height=55mm]{plots/simulated_class1_lam1_0.2_lam2_0.7.pdf}
  }
  \subfloat{
    \includegraphics[width=70mm, height=55mm]{plots/simulated_class2_lam1_0.2_lam2_0.7.pdf}
  }
  \caption{Exact (\full) v.s.\ simulated (\dashed) waiting time CDFs. 
  For \classname{}-1, the leftmost curve is \npqname{} $(b=0)$, while for \classname{}-2, the leftmost curve is \fcfsname{} $(b=1)$.}
  \label{fig:simulated_high_rho}
\end{figure}}

We now reproduce \cref{fig:diff_high_rho} for the simulated \dapqname{} to evaluate the zero-inflated exponential approximation in this setting. \cref{fig:diff_simulated_high_rho} demonstrates that the accuracy is mostly unchanged from the \apqname{} setting $(d=0)$, with the exception that the curves are  less smooth due to the stochasticity of the simulations. 

\preprint{
\begin{figure}[htb]
  \centering
  \subfloat{
    \includegraphics[width=80mm, height=70mm]{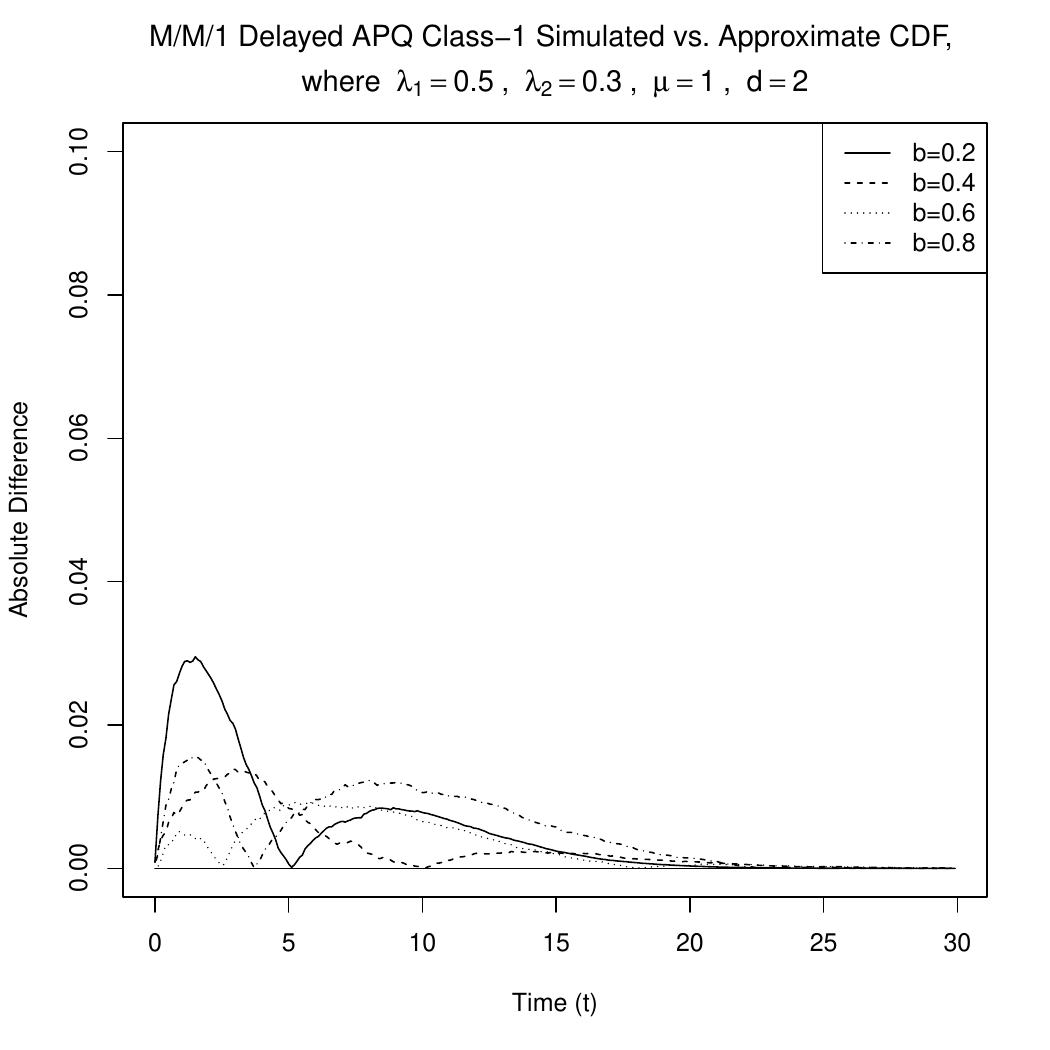}
  }
  \subfloat{
    \includegraphics[width=80mm, height=70mm]{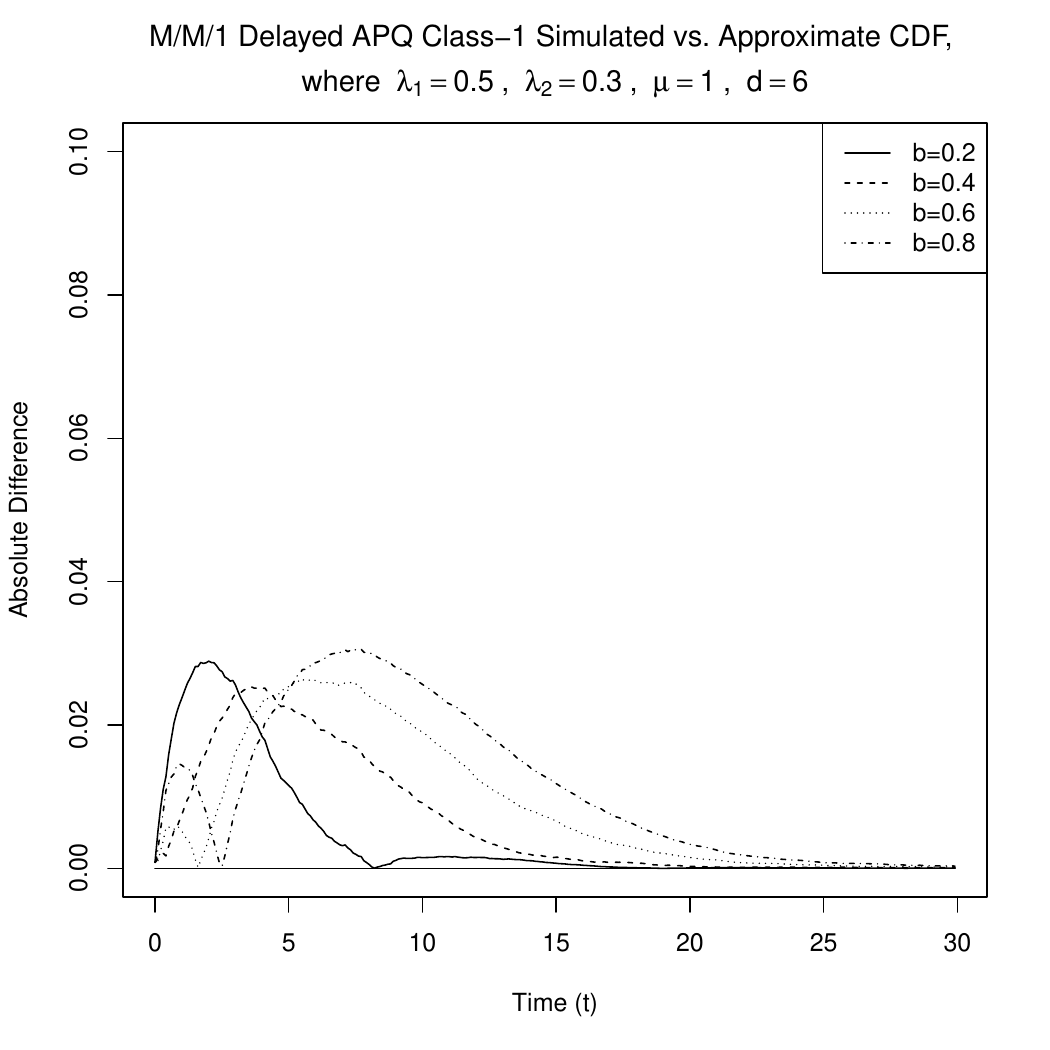}
  }
\\
  \subfloat{
    \includegraphics[width=80mm, height=70mm]{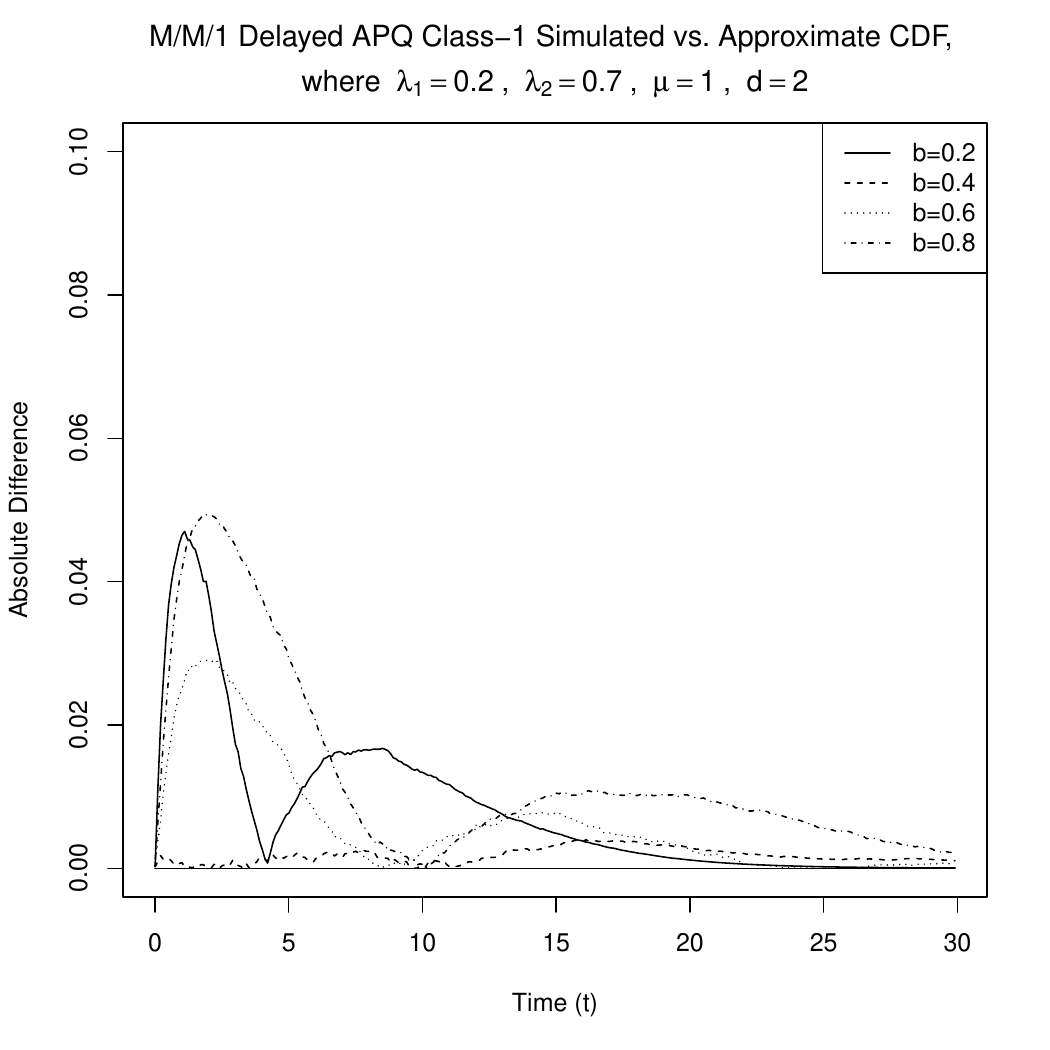}
  }
  \subfloat{
    \includegraphics[width=80mm, height=70mm]{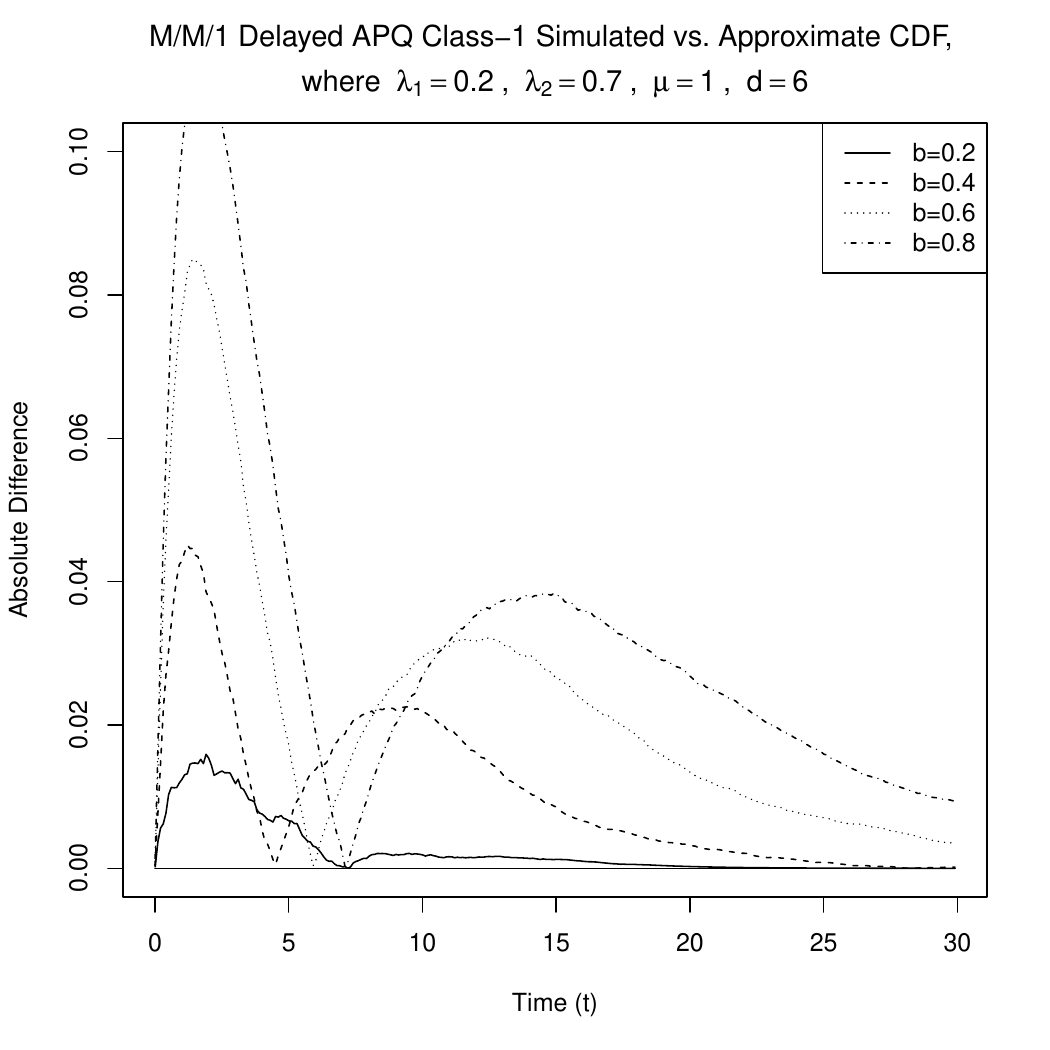}
  }
  \caption{Absolute difference between simulated and exponential approximations of \highname{} waiting time CDFs.}
  \label{fig:diff_simulated_high_rho}
\end{figure}}

\infor{
\begin{figure}[htb]
  \centering
  \subfloat{
    \includegraphics[width=70mm, height=55mm]{plots/diff_simulated_d_2_lam1_0.5_lam2_0.3.pdf}
  }
  \subfloat{
    \includegraphics[width=70mm, height=55mm]{plots/diff_simulated_d_6_lam1_0.5_lam2_0.3.pdf}
  }
\\
  \subfloat{
    \includegraphics[width=70mm, height=55mm]{plots/diff_simulated_d_2_lam1_0.2_lam2_0.7.pdf}
  }
  \subfloat{
    \includegraphics[width=70mm, height=55mm]{plots/diff_simulated_d_6_lam1_0.2_lam2_0.7.pdf}
  }
  \caption{Absolute difference between simulated and exponential approximations of \highname{} waiting time CDFs.}
  \label{fig:diff_simulated_high_rho}
\end{figure}}

\subsection{Optimizing parameters using approximate CDFs}

In this section, we mirror the optimization procedure carried out in \cref{sec:optimizing}, but using the entire (approximate) \highname{} waiting time CDF rather than only the expected value. In \cref{fig:mm1_feasible_bound_class1}, we plot the feasibility region for $(\lam_1,\lam_2)$ pairs that require tuning of $d$ to satisfy a \highname{} KPI constraint, in contrast with the \lowname{} constraint considered in \cref{fig:mm1_feasible_bound}. This feasible region is \emph{exact}, since we do not require an exponential approximation to compute the \fcfsname{} and \npqname{} CDF of \highname{} customers.
Note that for certain \highname{} KPIs, the feasible region takes a diamond shape rather than a simple triangle shape.
This is because we are considering essentially the same targets and compliance probabilities as for \classname{}-2, but these are naturally easier to achieve for \classname{}-1, and thus a larger feasible region becomes visible.

\preprint{
\begin{figure}[htb]
  \centering
  \subfloat{
    \includegraphics[width=52mm, height=55mm]{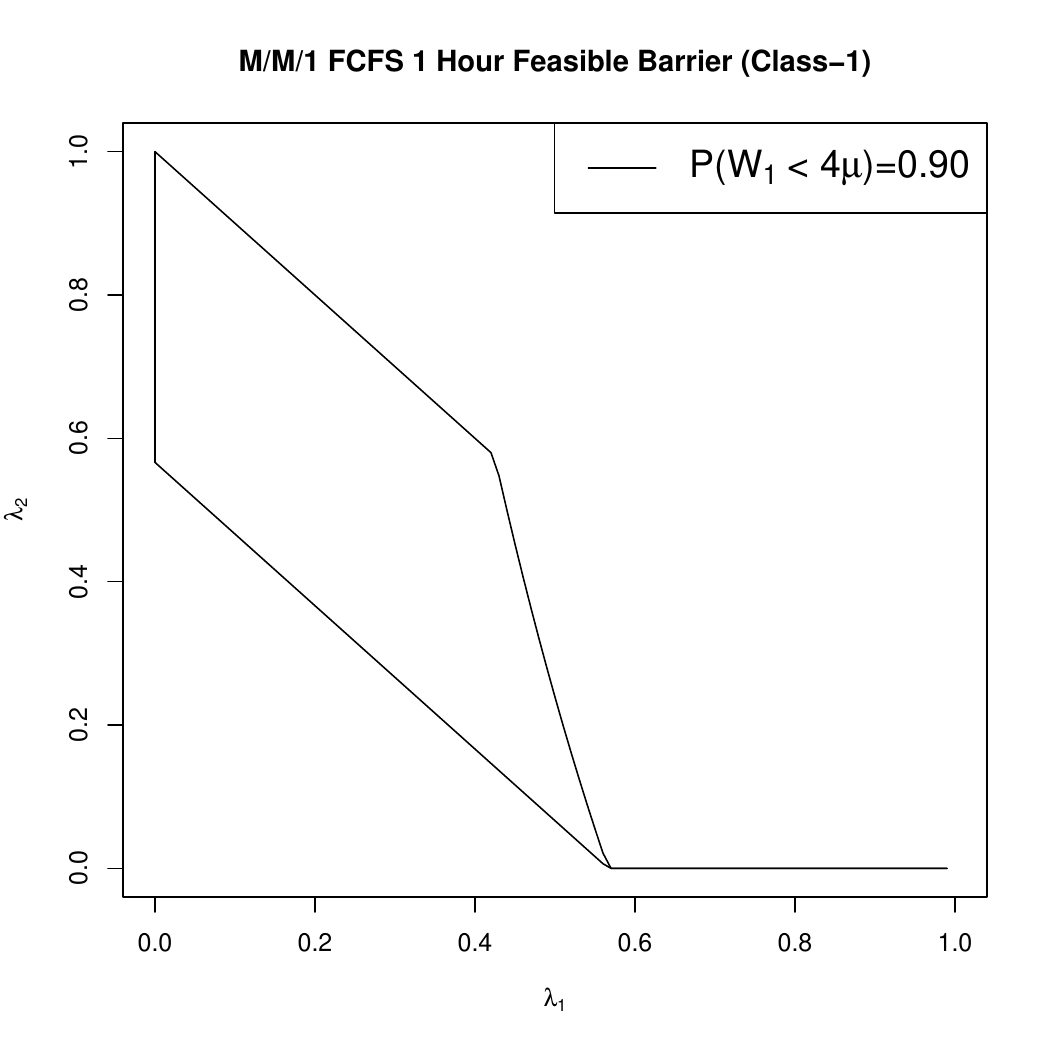}
  }
  \subfloat{
    \includegraphics[width=52mm, height=55mm]{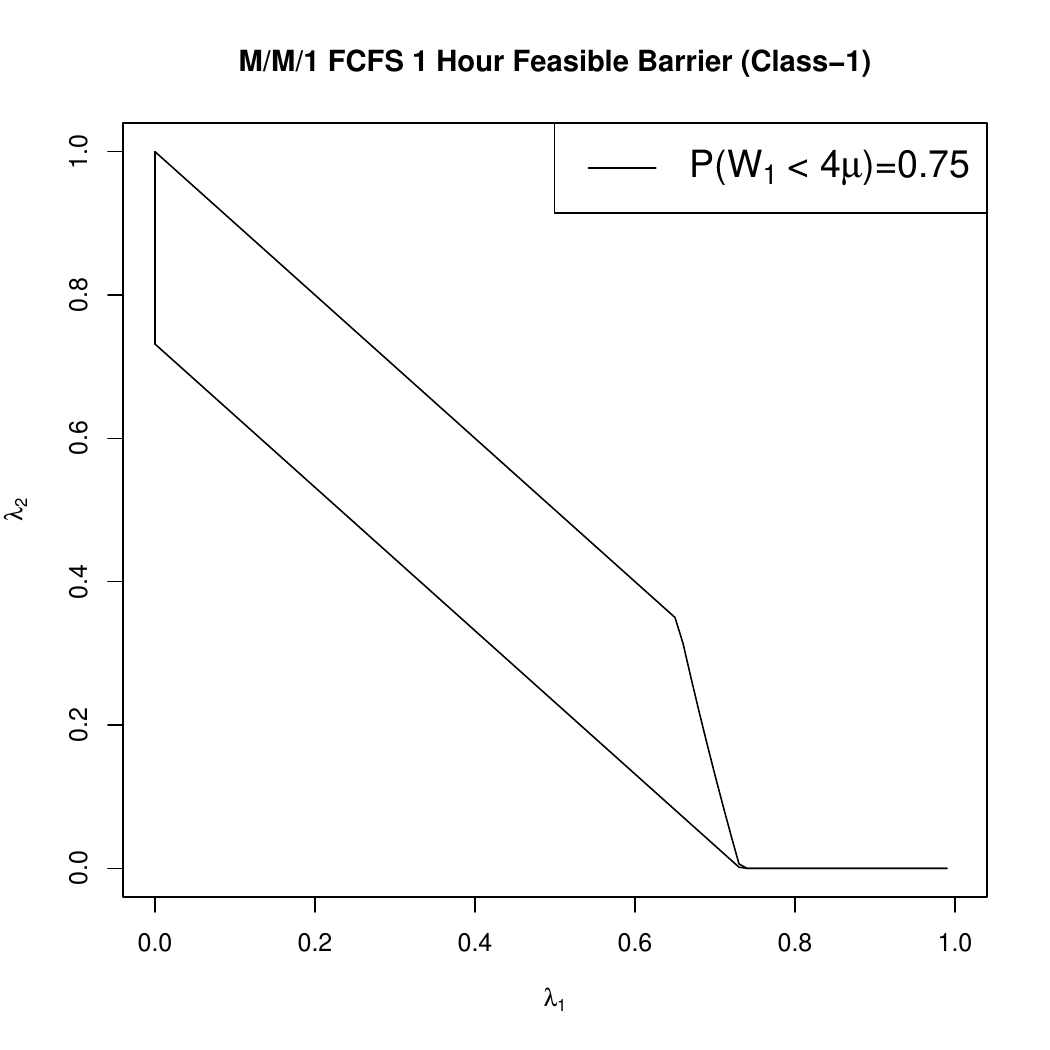}
  }
  \subfloat{
    \includegraphics[width=52mm, height=55mm]{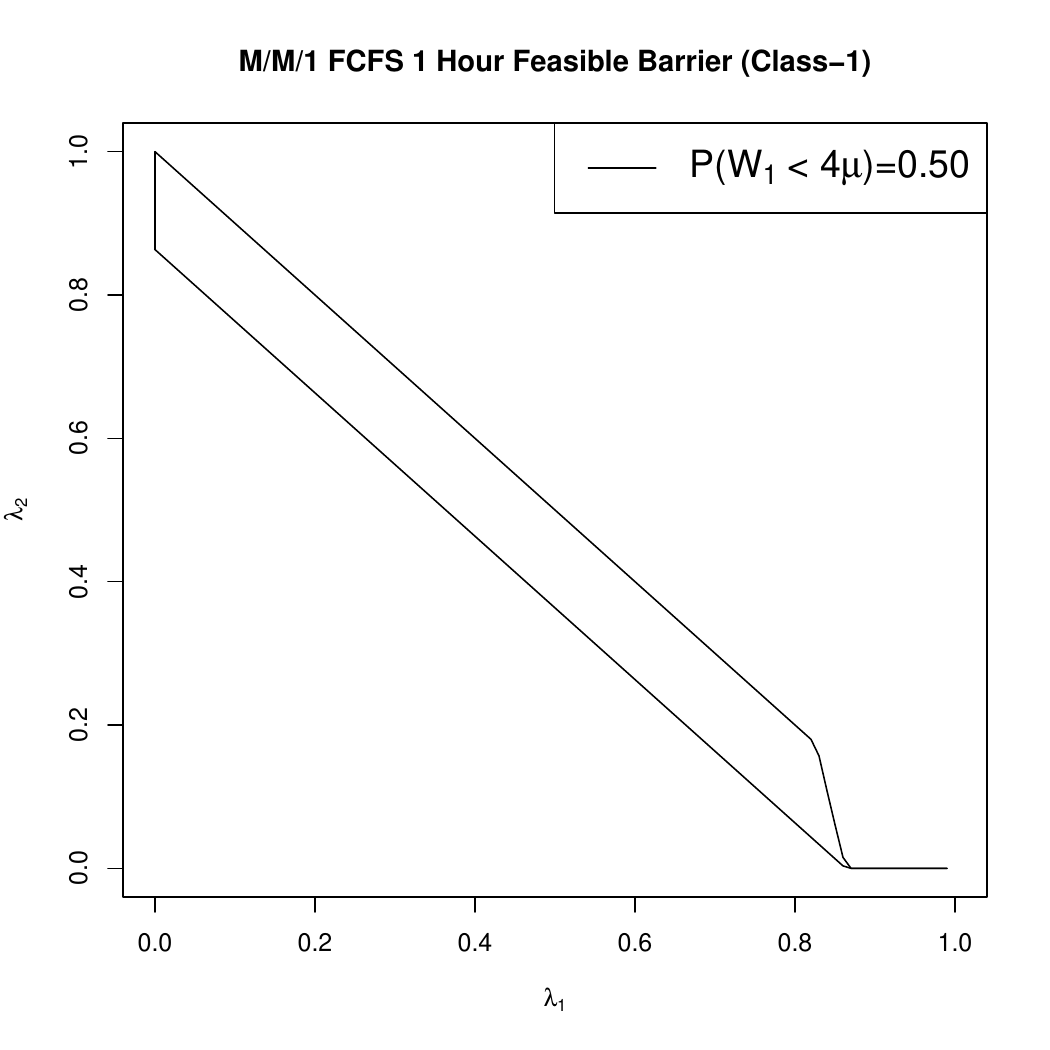}
  }
\\
  \subfloat{
    \includegraphics[width=52mm, height=55mm]{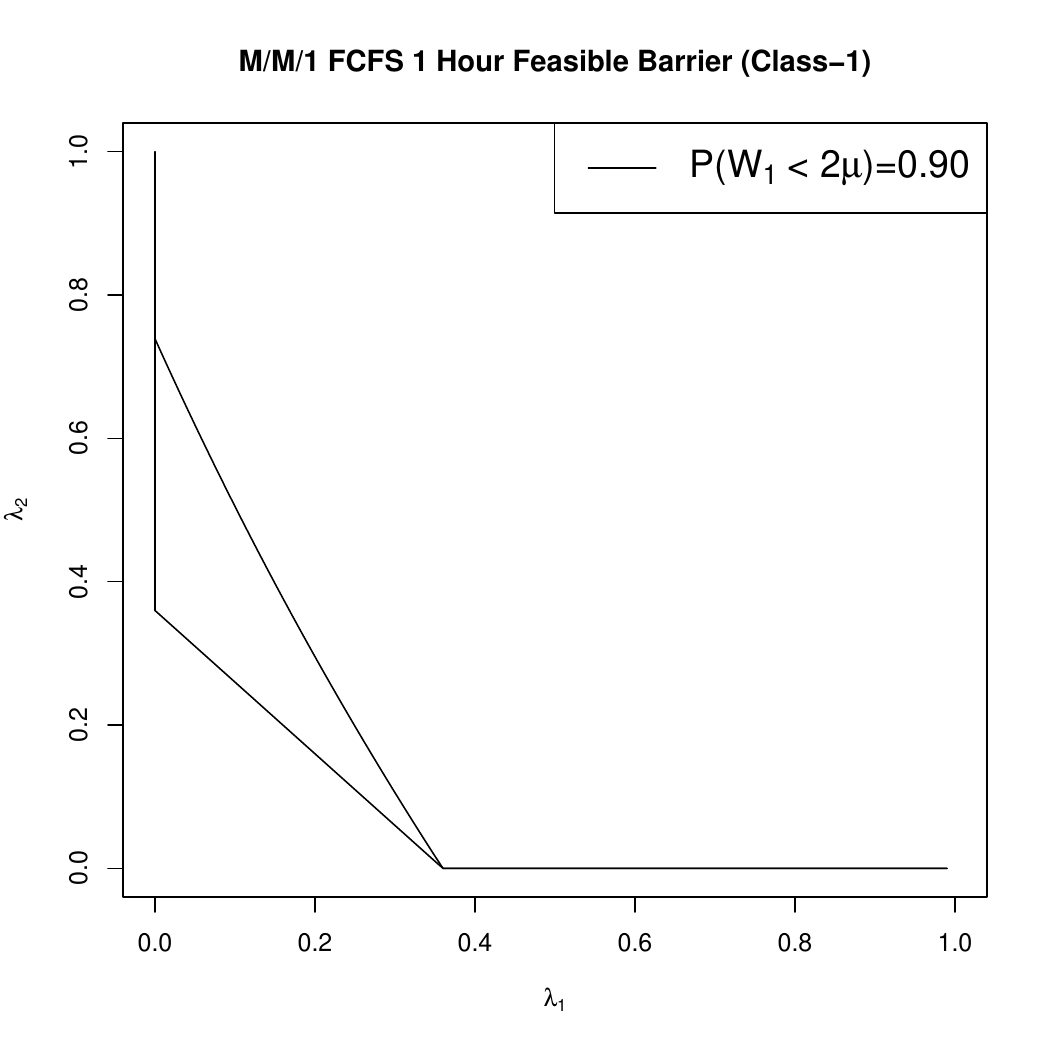}
  }
  \subfloat{
    \includegraphics[width=52mm, height=55mm]{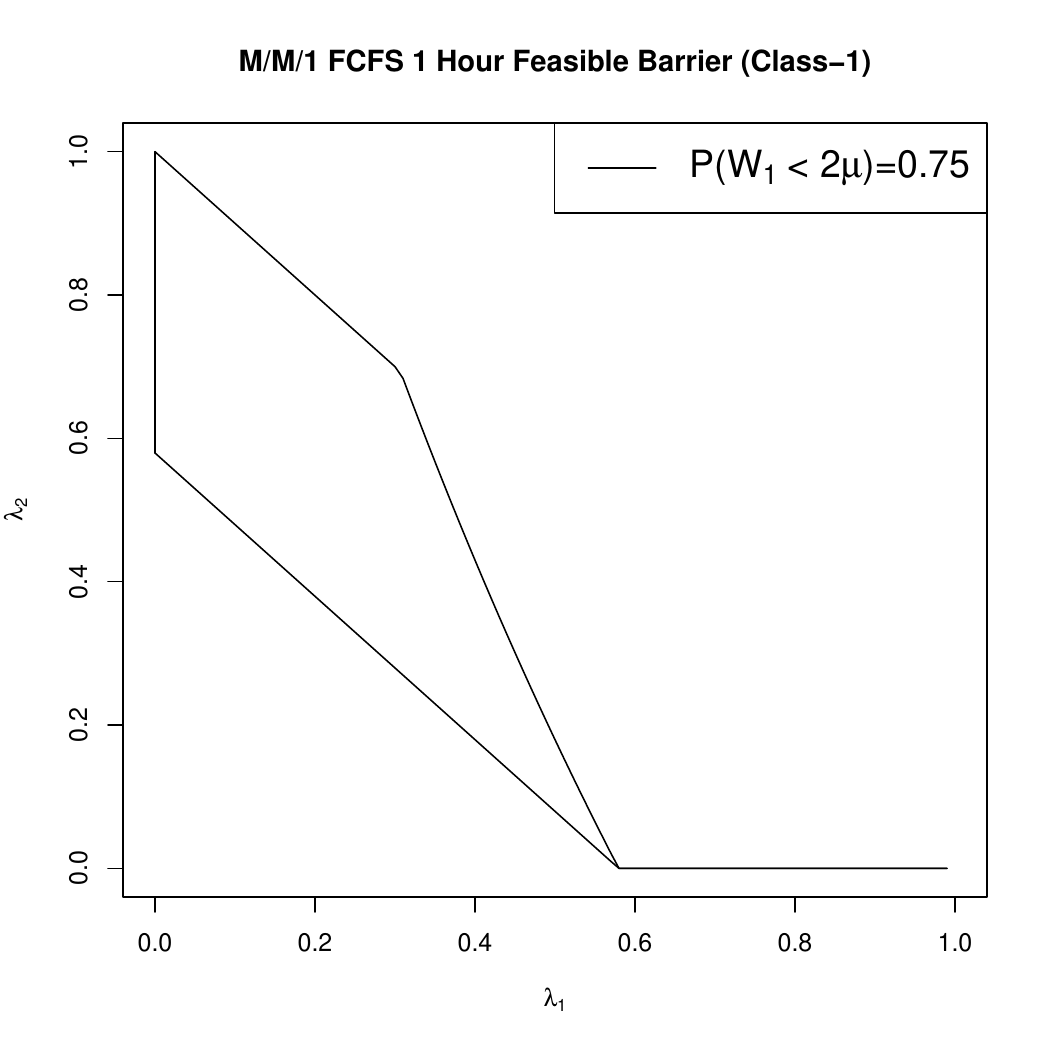}
  }
  \subfloat{
    \includegraphics[width=52mm, height=55mm]{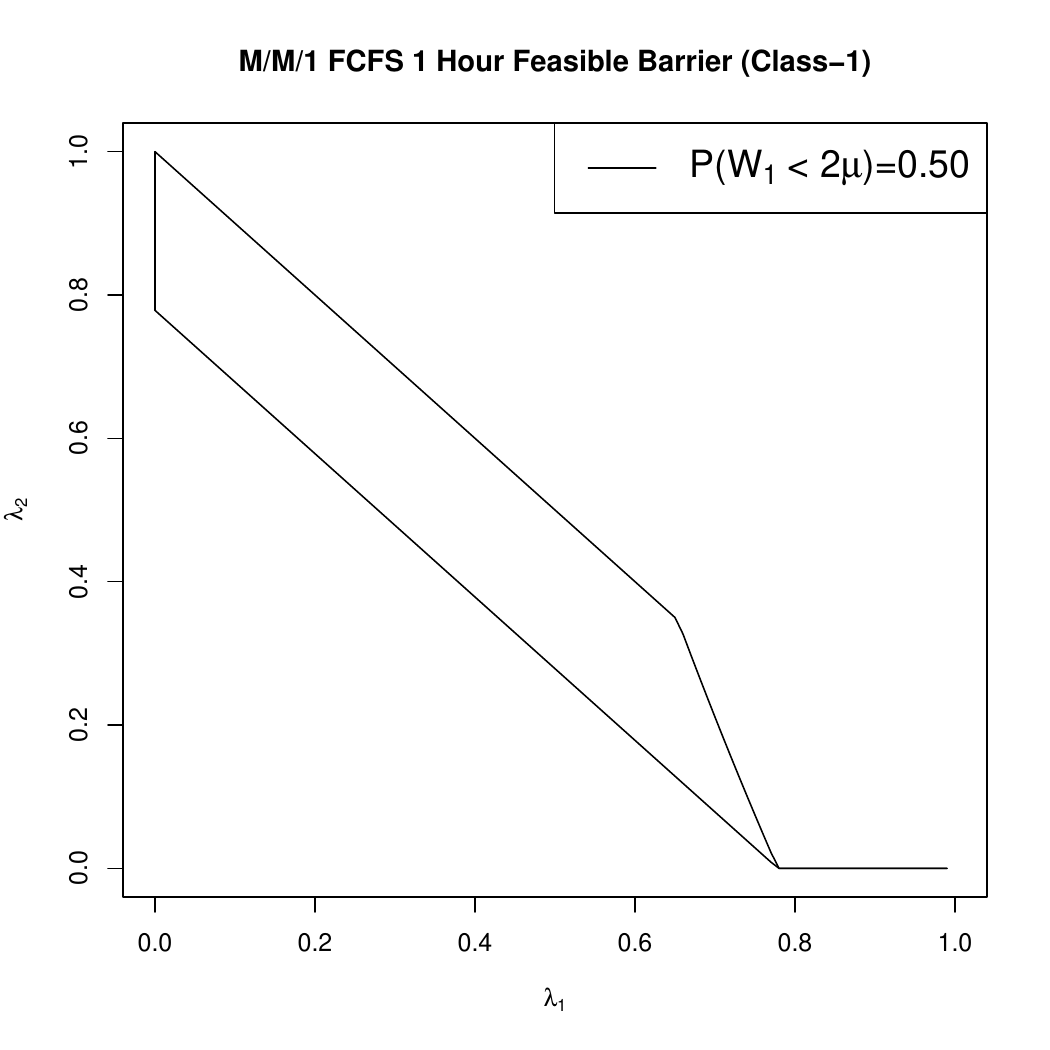}
  }
  \caption{Upper and lower boundaries for $(\lam_1, \lam_2)$ pairs that meet KPI probability for \classname{}-1 waiting time under one hour and half hour and require optimization of $d$.}
  \label{fig:mm1_feasible_bound_class1}
\end{figure}}

\infor{
\begin{figure}[htb]
  \centering
  \subfloat{
    \includegraphics[width=45mm, height=45mm]{plots/MM1_feasible_bound_4_class1_0.90.pdf}
  }
  \subfloat{
    \includegraphics[width=45mm, height=45mm]{plots/MM1_feasible_bound_4_class1_0.75.pdf}
  }
  \subfloat{
    \includegraphics[width=45mm, height=45mm]{plots/MM1_feasible_bound_4_class1_0.50.pdf}
  }
\\
  \subfloat{
    \includegraphics[width=45mm, height=45mm]{plots/MM1_feasible_bound_2_class1_0.90.pdf}
  }
  \subfloat{
    \includegraphics[width=45mm, height=45mm]{plots/MM1_feasible_bound_2_class1_0.75.pdf}
  }
  \subfloat{
    \includegraphics[width=45mm, height=45mm]{plots/MM1_feasible_bound_2_class1_0.50.pdf}
  }
  \caption{Upper and lower boundaries for $(\lam_1, \lam_2)$ pairs that meet KPI probability for \classname{}-1 waiting time under one hour and half hour and require optimization of $d$.}
  \label{fig:mm1_feasible_bound_class1}
\end{figure}}

The roles of the \fcfsname{} and the \npqname{} are reversed in \cref{fig:mm1_feasible_bound_class1} from \cref{fig:mm1_feasible_bound}. In particular, the \npqname{} is \emph{more} favourable to \classname{}-1 than the \fcfsname{}, and consequently the lower boundary corresponds to when the KPI can be met even in the \fcfsname{} while the upper boundary corresponds to when the KPI cannot be met even in the \npqname{}. By inspection, one can see that the overlap between the regions requiring optimization of $d$ for the advocated KPIs of $\PP(\waitD_2 < 4) \geq 0.85$ and $\PP(\waitD_1 < 2) \geq 0.9$ is nearly negligible; it makes up a sliver of a triangle for $\lam_1$ between $0$ and $0.1$ and $\lam_2$ between $0.55$ and $0.65$. Nonetheless, this specific KPI pair is only a suggestion from a certain context, so we proceed with exploring optimal $(d,b^*(d))$ pairs using the \classname-1 feasible regions.

We now extend \cref{fig:mm1_kpi_examples} using our approximation of the \classname{}-1 waiting time CDF. To do so, we plot the expected \classname{}-2 waiting times for $b^*(d)$ chosen to optimize this expectation subject to still meeting the \classname{}-1 constraint. More specifically, we use
\[
  b^*(d) = \max \left\{b : b \in [0,1], \cdfDparams{d}{b}_1(w) \geq p \right\},
\]
in conjunction with our zero-inflated exponential approximation to $\cdfD_1$. 

In \cref{fig:mm1_kpi_class1_examples_bvals}, we see the expected pattern that $b^*(d)$ is monotonic in $d$. 
As a consequence of our exponential approximation, the KPI at target $w$ and compliance probability $p$ is achieved if and only if\footnote{This follows from rearranging \cref{eqn:zexp-def} with $\alpha=\rho/\EE \waitD_1$.}
\[\label{eqn:kpi-mean}
  \EE \waitD_1 \leq \frac{w\rho}{\log(\rho/(1-p))}.
\]
Thus, since $b^*(d)$ is the largest $b$ that achieves the KPI for a fixed $d$, the \classname{}-1 expected waiting time under $(d,b^*(d))$ is always the constant value equal to the RHS of \cref{eqn:kpi-mean}.
If it were not, there would be some slack in the KPI (i.e., $\EE \waitD_1$ would be \emph{strictly} smaller than the RHS), and thus $b^*(d)$ could be taken larger while still achieving the KPI, which contradicts $b^*(d)$ being the maximum such value.  
By the conservation law, this also keeps the \classname{}-2 expected waiting time constant, which we observe in \cref{fig:mm1_kpi_class1_examples}.

To investigate whether the fact that $(d, b^*(d))$ keeps $\EE \waitD_1$ constant is only an artefact of our exponential approximation, we also computed full \classname{}-2 waiting time CDFs for $(d,b^*(d))$ pairs.
The CDFs were all numerically indistinguishable (up to 7 decimal places) within the same $(\lam_1,\lam_2)$ pair, 
which suggests that optimizing to obtain $b^*(d)$ not only results in constant expected waiting time, but also constant higher-order moments of the \classname{}-2 waiting time.
Thus, we arrive at the same conclusion as we did for \classname{}-2 KPIs.
Specifically, when optimizing the queueing parameters to minimize \classname{}-2 expected waiting time subject to meeting the \classname{}-1 KPI, there is no benefit to having access to the \dapqname{} beyond the \apqname{}. However, we do note that since the \classname{}-2 waiting time is essentially constant as a function of $d$, selecting a \dapqname{} over an \apqname{} may have practical differences that make it preferable.

\preprint{
\begin{figure}[htb]
  \centering
  \subfloat{
    \includegraphics[width=52mm, height=55mm]{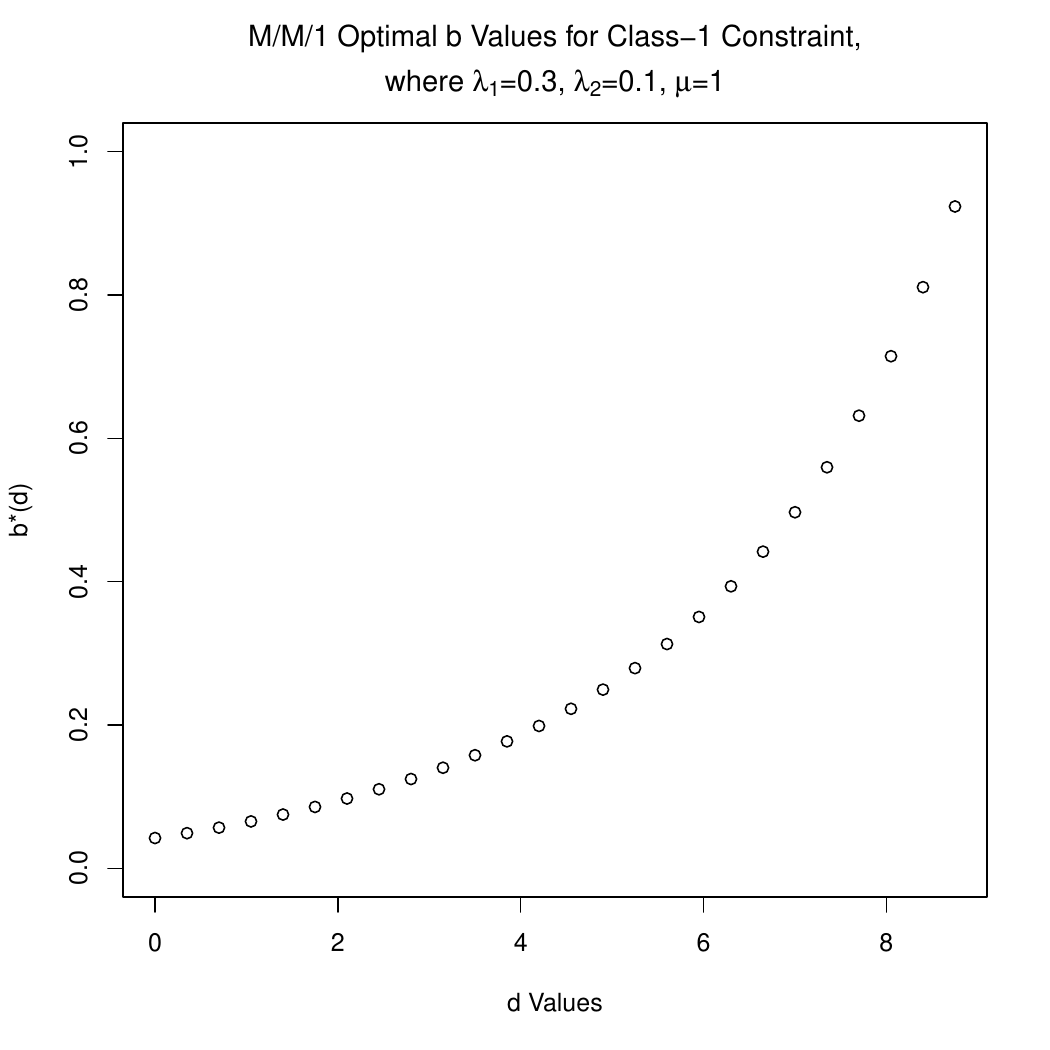}
  }
  \subfloat{
    \includegraphics[width=52mm, height=55mm]{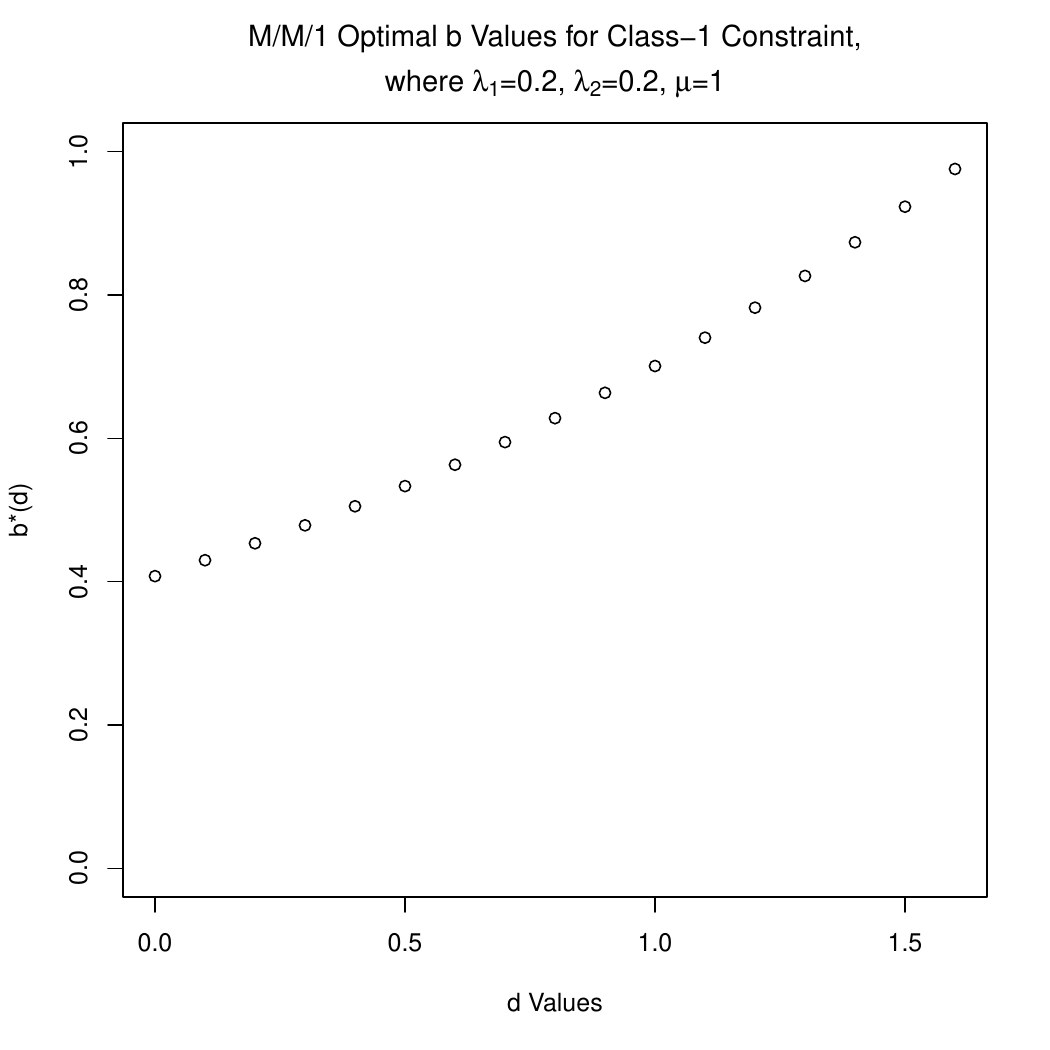}
  }
  \subfloat{
    \includegraphics[width=52mm, height=55mm]{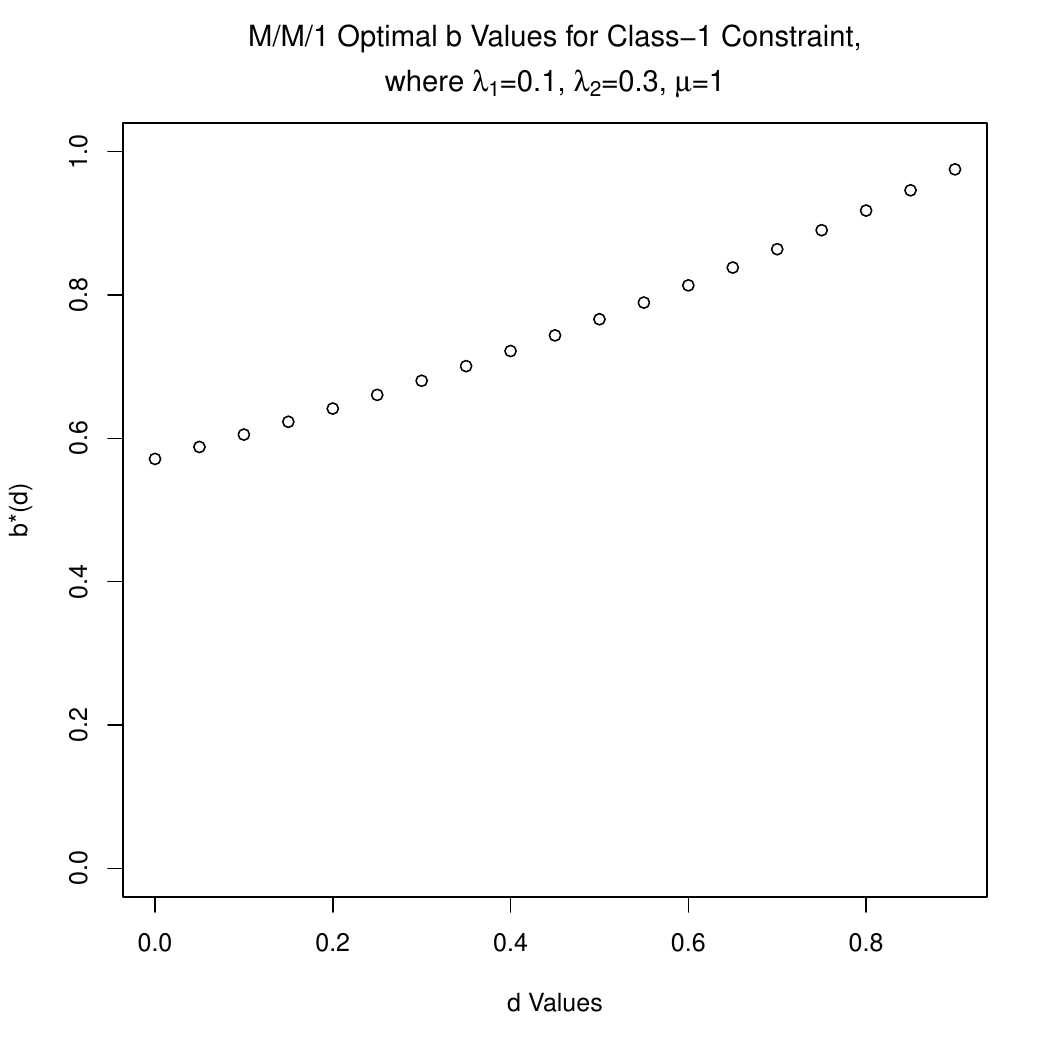}
  }
  \caption{Optimal $b^*(d)$ for delay level $d$ at various occupancy levels for the KPI $\PP(\waitD_1 < 2) \geq 0.9$.}
  \label{fig:mm1_kpi_class1_examples_bvals}
\end{figure}
\begin{figure}[htb]
  \centering
  \subfloat{
    \includegraphics[width=52mm, height=55mm]{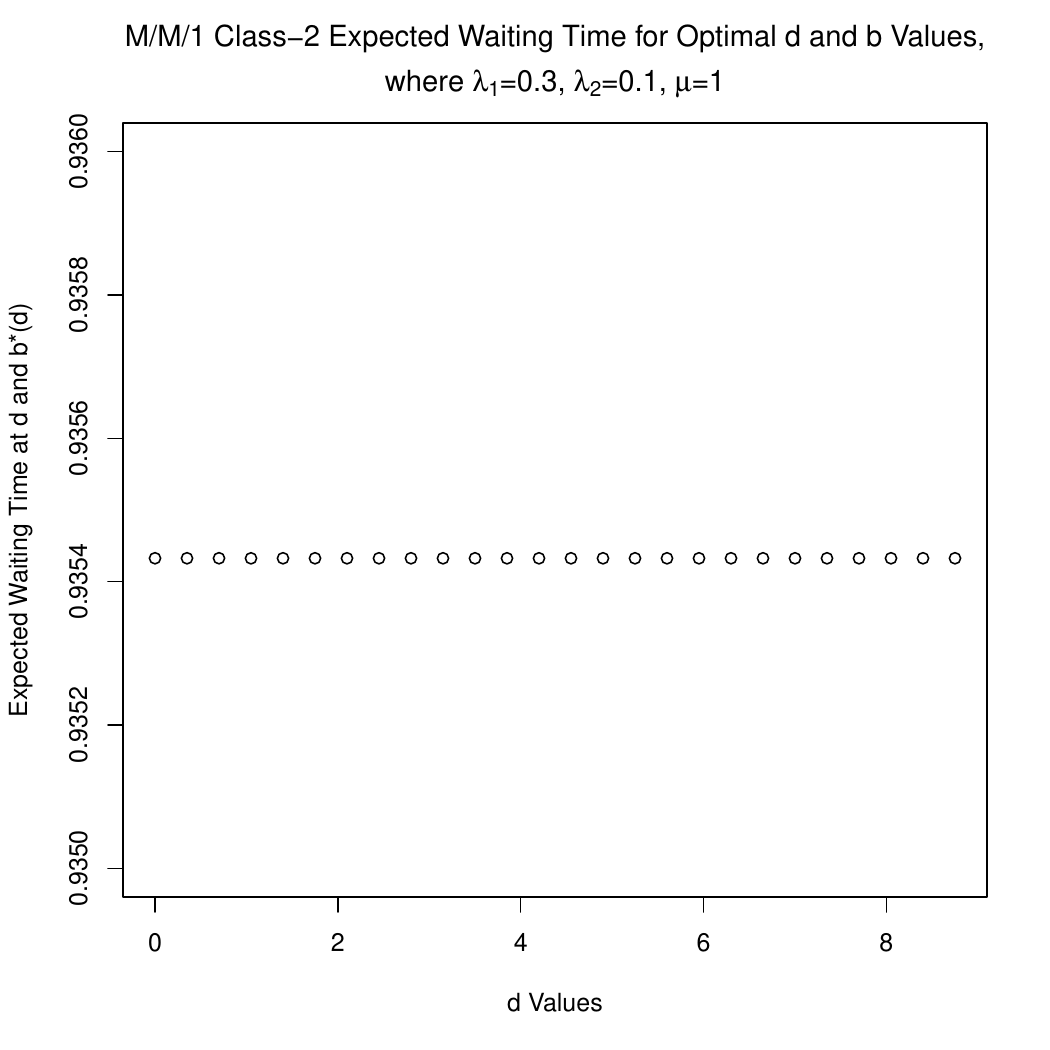}
  }
  \subfloat{
    \includegraphics[width=52mm, height=55mm]{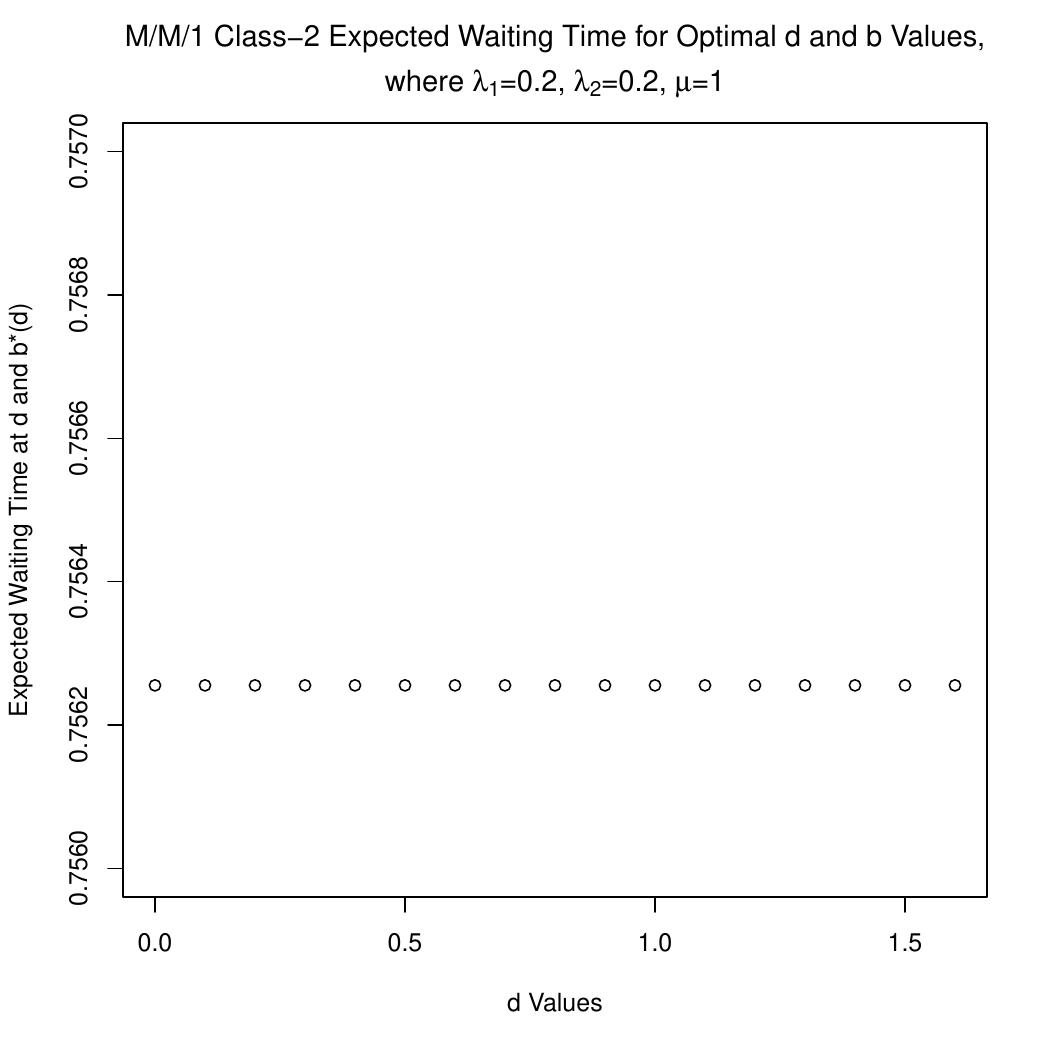}
  }
  \subfloat{
    \includegraphics[width=52mm, height=55mm]{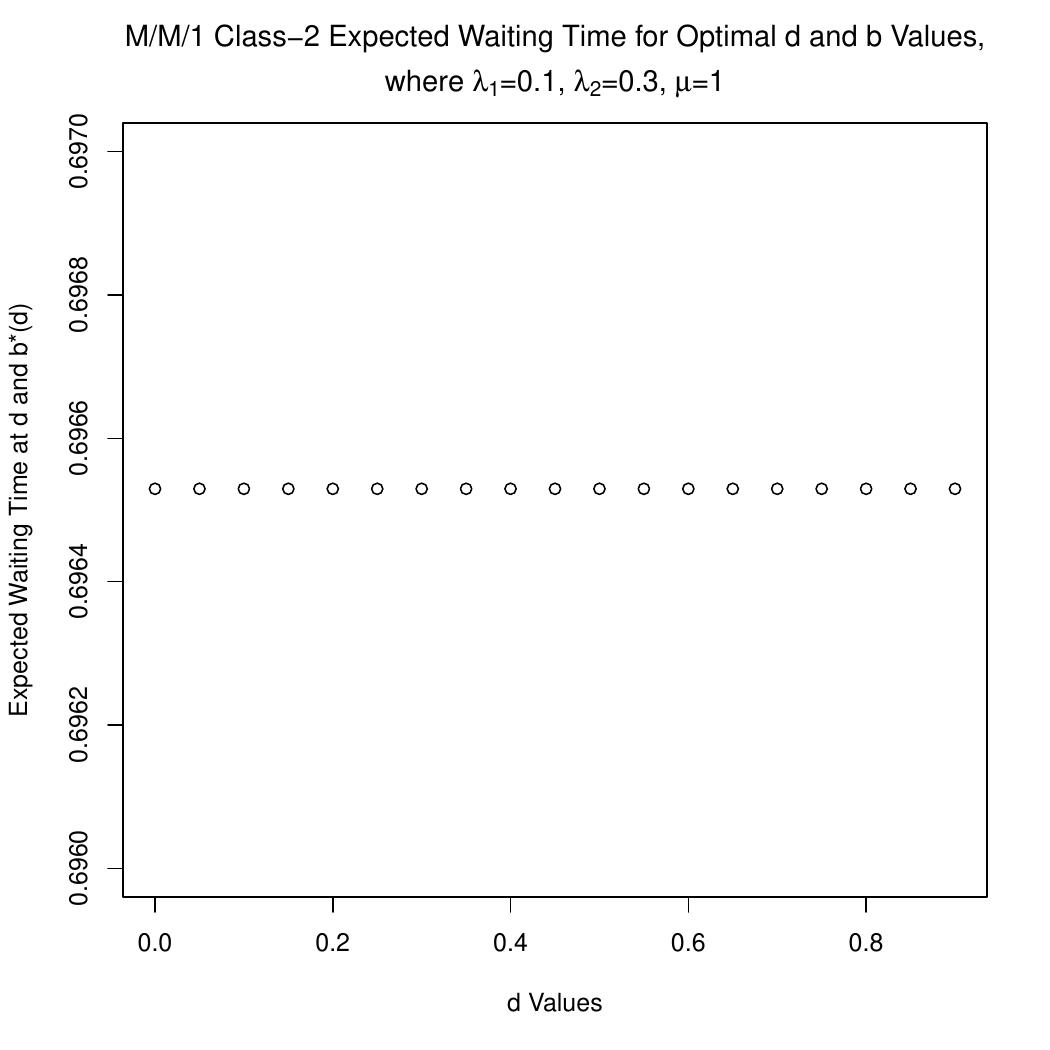}
  }
  \caption{$\EE \waitD_2$ for $(d,b^*(d))$ pairs associated with various occupancy levels for the KPI $\PP(\waitD_1 < 2) \geq 0.9$.}
  \label{fig:mm1_kpi_class1_examples}
\end{figure}}

\infor{
\begin{figure}[htb]
  \centering
  \subfloat{
    \includegraphics[width=45mm, height=45mm]{plots/MM1_kpi_class1_example_A_bvals.pdf}
  }
  \subfloat{
    \includegraphics[width=45mm, height=45mm]{plots/MM1_kpi_class1_example_B_bvals.pdf}
  }
  \subfloat{
    \includegraphics[width=45mm, height=45mm]{plots/MM1_kpi_class1_example_C_bvals.pdf}
  }
  \caption{Optimal $b^*(d)$ for delay level $d$ at various occupancy levels for the KPI $\PP(\waitD_1 < 2) \geq 0.9$.}
  \label{fig:mm1_kpi_class1_examples_bvals}
\end{figure}
\begin{figure}[htb]
  \centering
  \subfloat{
    \includegraphics[width=45mm, height=45mm]{plots/MM1_kpi_class1_example_A.pdf}
  }
  \subfloat{
    \includegraphics[width=45mm, height=45mm]{plots/MM1_kpi_class1_example_B.pdf}
  }
  \subfloat{
    \includegraphics[width=45mm, height=45mm]{plots/MM1_kpi_class1_example_C.pdf}
  }
  \caption{$\EE \waitD_2$ for $(d,b^*(d))$ pairs associated with various occupancy levels for the KPI $\PP(\waitD_1 < 2) \geq 0.9$.}
  \label{fig:mm1_kpi_class1_examples}
\end{figure}}

\section{Conclusions} \label{sec:conclusions}

This paper builds on previous results for \classname{}-2 waiting times by providing an analytical expression for the \classname{}-1 expected waiting time in the M/G/1 \dapqname{}. We provide an algorithm that can be implemented for the M/M/1 and M/D/1 queueing disciplines, and numerically demonstrate the effect of changing the accumulation rate and the delay period. 
We then apply our computation algorithm for the expected waiting time to the health care setting; specifically, waiting times for patients in Canadian emergency departments. Previous analysis of KPI compliance for \classname{}-2 customers is extended by also optimizing over the expected waiting time for \classname{}-1 customers. We conclude that outside of the regions where the \npqname{} suffices or the FCFS fails, the optimal queueing discipline is always the \apqname{}, or equivalently a \dapqname{} with $d=0$.
Using a zero-inflated Exponential approximation, we also investigate optimizing KPI parameters for a \classname{}-1 constraint. We observe that this approximation seems reasonably accurate, and that the same trend holds where the optimal queueing discipline is always the \apqname{} rather than the \dapqname{}.

Major open problems include extending our analysis of expected waiting times to other service distributions of interest and identifying an exact analytical expression for the \classname{}-1 waiting time. Additionally, since this may be intractable beyond simple service distributions, it is of interest to extend our zero-inflated exponential approximations to other situations to facilitate the use of \dapqname{}s in real-world settings.

\section*{Acknowledgements}

This work is supported by an NSERC Canada Graduate Scholarship and an NSERC Discovery Grant. The authors are grateful to Dr.\ Doug Down for his insights regarding the geometric tail decay rate of the M/D/1 queue length, and to Elisheva Schwarz-Zur for pointing out that the statement of \cref{thm:mm1_avg} was incomplete in a preliminary draft.
\preprint{\bibliographystyle{abbrvnat}}
\infor{\bibliographystyle{tfcse}}
\bibliography{references.bib}

\begin{thebibliography}{}

\bibitem[\protect\citeauthoryear{Abate and Whitt}{Abate and
  Whitt}{1995}]{abate95approx}
Abate, J. and W.~Whitt (1995).
\newblock Exponential approximations for tail probabilities in queues, i:
  Waiting times.
\newblock {\em Operations Research\/}~{\em 43}, 885--901.

\bibitem[\protect\citeauthoryear{{Adan} and {Haviv}}{{Adan} and
  {Haviv}}{2009}]{adan09residual}
{Adan}, I. and M.~{Haviv} (2009).
\newblock Conditional ages and residual service times in the {M/G/1} queue.
\newblock {\em Stochastic Models\/}~{\em 25}, 110--128.

\bibitem[\protect\citeauthoryear{{Bullard}, {Musgrave}, {Warren}, {Unger},
  {Skeldon}, {Grierson}, {van der Linde}, and {Swain}}{{Bullard}
  et~al.}{2017}]{bullard17ctas}
{Bullard}, M.~J., E.~{Musgrave}, D.~{Warren}, B.~{Unger}, T.~{Skeldon},
  R.~{Grierson}, E.~{van der Linde}, and J.~{Swain} (2017).
\newblock Revisions to the canadian emergency department triage and acuity
  scale {(CTAS)} guidelines 2016.
\newblock {\em Canadian Journal of Emergency Medicine\/}~{\em 19}, S18--S27.

\bibitem[\protect\citeauthoryear{{Bullard}, {Unger}, {Spence}, and
  {Grafstein}}{{Bullard} et~al.}{2008}]{bullard08ctas}
{Bullard}, M.~J., B.~{Unger}, J.~{Spence}, and E.~{Grafstein} (2008).
\newblock Revisions to the canadian emergency department triage and acuity
  scale {(CTAS)} guidelines.
\newblock {\em Canadian Journal of Emergency Medicine\/}~{\em 10}, 136--151.

\bibitem[\protect\citeauthoryear{{Cildoz}, {Ibarra}, and {Mallor}}{{Cildoz}
  et~al.}{2019}]{cildoz19apqh}
{Cildoz}, M., A.~{Ibarra}, and F.~{Mallor} (2019).
\newblock Accumulating priority queues versus pure priority queues for managing
  patients in emergency departments.
\newblock {\em Operations Research for Health Care\/}~{\em 23}.

\bibitem[\protect\citeauthoryear{{Dreyer}, {McLeod}, {Anderson}, {Carter}, and
  {Zaric}}{{Dreyer} et~al.}{2009}]{dreyer09workload}
{Dreyer}, J.~F., S.~L. {McLeod}, C.~K. {Anderson}, M.~W. {Carter}, and G.~S.
  {Zaric} (2009).
\newblock Physician workload and the canadian emergency department triage and
  acuity scale: The predictors of workload in the emergency room {(POWER)}
  study.
\newblock {\em Canadian Journal of Emergency Medicine\/}~{\em 11}, 321--329.

\bibitem[\protect\citeauthoryear{{Fajardo} and {Drekic}}{{Fajardo} and
  {Drekic}}{2017}]{fajardo15preemptive}
{Fajardo}, V.~A. and S.~{Drekic} (2017).
\newblock Waiting time distributions in the preemptive accumulating priority
  queue.
\newblock {\em Methodology and Computing in Applied Probability\/}~{\em 19},
  255--284.

\bibitem[\protect\citeauthoryear{{Kleinrock}}{{Kleinrock}}{1965}]{kleinrock65conservation}
{Kleinrock}, L. (1965).
\newblock A conservation law for a wide class of queueing disciplines.
\newblock {\em Naval Research Logistics Quarterly\/}~{\em 12\/}(2), 181--192.

\bibitem[\protect\citeauthoryear{{Kleinrock}}{{Kleinrock}}{1976}]{kleinrock76vol2}
{Kleinrock}, L. (1976).
\newblock {\em Queueing Systems, Volume II: Computer Applications}.
\newblock Wiley.

\bibitem[\protect\citeauthoryear{{Li}, {Stanford}, {Sharif}, {Caron}, and
  {Pardhan}}{{Li} et~al.}{2019}]{li19kpis}
{Li}, N., D.~A. {Stanford}, A.~B. {Sharif}, R.~J. {Caron}, and A.~{Pardhan}
  (2019).
\newblock Optimising key performance indicator adherence with application to
  emergency department congestion.
\newblock {\em European Journal of Operational Research\/}~{\em 272}, 313--323.

\bibitem[\protect\citeauthoryear{{Li}, {Stanford}, {Taylor}, and
  {Ziedins}}{{Li} et~al.}{2017}]{li17nonlinear}
{Li}, N., D.~A. {Stanford}, P.~{Taylor}, and I.~{Ziedins} (2017).
\newblock Nonlinear accumulating priority queues with equivalent linear
  proxies.
\newblock {\em Operations Research\/}~{\em 65\/}(6), 1712--1721.

\bibitem[\protect\citeauthoryear{{Mojalal}, {Stanford}, and {Caron}}{{Mojalal}
  et~al.}{2019}]{mojalal19dapq}
{Mojalal}, M., D.~A. {Stanford}, and R.~J. {Caron} (2019).
\newblock The lower-class waiting time distribution in the delayed accumulating
  priority queue.
\newblock {\em INFOR: Information Systems and Operational Research\/}.

\bibitem[\protect\citeauthoryear{{Oz}, {Shneer}, and {Ziedins}}{{Oz}
  et~al.}{2020}]{oz20covid}
{Oz}, B., S.~{Shneer}, and I.~{Ziedins} (2020).
\newblock Static vs accumulating priorities in healthcare queues under heavy
  loads.
\newblock arXiv:2003.14087.

\bibitem[\protect\citeauthoryear{{Sharif}}{{Sharif}}{2016}]{sharif16probability}
{Sharif}, A.~B. (2016).
\newblock {\em Probability Models for Healthcare Operations with Application to
  Emergency Medicine}.
\newblock Ph.\ D. thesis, The University of Western Ontario.

\bibitem[\protect\citeauthoryear{{Stanford}, {Taylor}, and
  {Ziedins}}{{Stanford} et~al.}{2014}]{stanford14apq}
{Stanford}, D.~A., P.~{Taylor}, and I.~{Ziedins} (2014).
\newblock Waiting time distributions in the accumulating priority queue.
\newblock {\em Queueing Systems\/}~{\em 77\/}(3), 297--330.

\bibitem[\protect\citeauthoryear{{Tijms}}{{Tijms}}{1994}]{tijms94stochastic}
{Tijms}, H.~C. (1994).
\newblock {\em Stochastic Models: An Algorithmic Approach}.
\newblock Wiley.

\end{thebibliography}

\newpage
\appendix

\preprint{
\section{Proofs from \cref{sec:computation}}
\label{app:proofs}
}
\infor{
\section{Proofs from \cref{sec:computation}}
\label{app:proofs}
}

\begin{proof}[Proof of \cref{thm:mm1_avg}]
By Corollary~3.1 of \citet{mojalal19dapq},
\*[
  \EE \waitD_2
    &= \EE \left[\waitD_2 \1\{\waitD_2 \leq d\} \right] +
       \EE \left[\waitD_2 \1\{\waitD_2 > d\} \right] \\
    &= \EE \left[\waitN_2 \1\{\waitN_2 \leq d\} \right] +
       \EE \left[\waitD_2 \1\{\waitD_2 > d\} \right].
\]

Then, using the definition of LST and Theorem~3.2 of \citet{mojalal19dapq},
\*[
    &\hspace{-1em}\EE \left[\waitD_2 \1\{\waitD_2 > d\} \right] \\
    &= - \frac{\dee}{\dee s} \lstD_2(s;d)  \Bigg\rvert_{s=0} \\
    &= \sum_{i=1}^\infty \sta_i \sum_{j=1}^\infty \PP \left[\num_d = j, \num_t > 0 \ \forall \ t \in [0,d) \big\lvert \num_0 = i \right] \left(d - j \frac{\dee}{\dee s} \accDs(s) \bigg\rvert_{s=0} \right),
\]
where the last line follows from $\accDs(0) = 1$.

Now, similarly,
\*[
  \EE \waitN_2
      &= \EE \left[\waitN_2 \1\{\waitN_2 \leq d\} \right] +
         \EE \left[\waitN_2 \1\{\waitN_2 > d\} \right],
\]
and by \cref{lem:npq},
\*[
  \EE \left[\waitN_2 \1\{\waitN_2 > d\} \right]
    &= \sum_{i=1}^\infty \sta_i \sum_{j=1}^\infty \PP \left[\num_d = j, \num_t > 0 \ \forall \ t \in [0,d) \big\lvert \num_0 = i \right] \left(d - j \frac{\dee}{\dee s} \accNs(s) \bigg\rvert_{s=0} \right).
\]

Consequently,
\*[
  &\EE \left[\waitN_2 - \waitD_2 \right] \\
  &= \sum_{i=1}^\infty \sta_i \sum_{j=1}^\infty \PP \left[\num_d = j, \num_t > 0 \ \forall \ t \in [0,d) \big\lvert \num_0 = i \right] j \left(\frac{\dee}{\dee s} \accDs(s) \bigg\rvert_{s=0} - \frac{\dee}{\dee s} \accNs(s) \bigg\rvert_{s=0} \right) \\
  &= \frac{\rho_1 b}{\mu(1 - \rhoA_1)(1 - \rho_1)} \sum_{i=1}^\infty \sta_i \sum_{j=1}^\infty j \PP \left[\num_d = j, \num_t > 0 \ \forall \ t \in [0,d) \big\lvert \num_0 = i \right],
\]
where the last line follows from differentiating the explicit forms of $\accDs$ and $\accNs$.

We use a continuous time Markov chain to handle this conditional probability, which is characterized by the transition matrix
\*[
  P &=
  \left[
  \begin{matrix}
    1 & 0 & 0 & 0 & \cdots \\
    q & 0 & p & 0 & \cdots \\
    0 & q & 0 & p & \cdots \\
    0 & 0 & q & 0 & \cdots \\
    \vdots & \vdots & \vdots & \vdots & \ddots
  \end{matrix}
  \right]
\]
such that for $q = \frac{\mu}{\mu + \lam_1}$, $p = \frac{\lam_1}{\mu + \lam_1}$, and $\nu = \mu + \lam_1$ we obtain
\*[
  \mathbb{P}\left[N_d=j, N_t>0; 0 \leq t \leq d \mid N_0=i \right]
  &= \sum_{k=0}^\infty P_{ij}^k \frac{e^{-\nu d} (\nu d)^k}{k!}.
\]

We are only interested in $i$ and $j$ such that the system is busy, so we define $P_+$ to be the $P$ matrix with its first row and first column removed. Observe that $P_+^k = (P^k)_+$. Now, define the following row and column vectors:
\*[
  \sta_+ = (1-\rho)\left[\rho, \rho^2, \rho^3, \dots \right],
  J_+ = \left[\begin{matrix} 1 \\ 2 \\ 3 \\ \vdots  \end{matrix} \right],
\]
where $\sta_+$ is defined in view of the stationary distribution of the M/M/1 being $\sta_i = (1-\rho)\rho^i$.

Then,
\*[
  \EE \left[\waitN_2 - \waitD_2 \right]
  &= \frac{\rho_1 b}{\mu(1 - \rhoA_1)(1 - \rho_1)} \sum_{k=0}^\infty \frac{e^{-\nu d} (\nu d)^k}{k!}
    \sta_+ P_+^k J_+.
\]

First, observe that
\*[
  \sta_+ P_+ &=
  (1-\rho)\left[
  q \rho^2,
  p \rho + q \rho^3,
  p \rho^2 + q \rho^4,
  p \rho^3 + q \rho^5,
  \dots
  \right].
\]

That is, for all but the first term,
\*[
  (\sta_+ P_+)_\ell
  &= (1-\rho)\rho^{\ell-1}r; \ \ell \geq 2,
\]
for $r = p + q \rho^2$. Repeatedly carrying out this the process and applying induction, we obtain
\*[
  (\sta_+ P_+^k)_\ell
  &= (1-\rho)\rho^{\ell-k} r^k; \ \ell \geq k+1.
\]

Thus, letting $x^{(k)}_\ell = (\sta_+ P_+^k)_\ell / (1-\rho)$ for $\ell, k \in \N$, we can write 
\[
  \sta_+ P_+^k J_+
  &= (1-\rho)\sum_{\ell=1}^k \ell x^{(k)}_\ell + (1-\rho)\sum_{\ell=k+1}^\infty \ell \rho^{\ell - k} r^k.
\label{eqn:mm1_piPJ_t1}
\]

Focusing on the second term, 
\[
  (1-\rho)\sum_{\ell=k+1}^\infty \ell \rho^{\ell - k} r^k
  &= (1-\rho) r^k \sum_{s=1}^\infty (s + k) \rho^s \\
  &= (1-\rho) r^k \left[\frac{\rho}{(1-\rho)^2} + \frac{k \rho}{1-\rho} \right] \\
  &= \rho r^k \left[\frac{1}{1-\rho} + k \right].
\label{eqn:mm1_piPJ_t2}
\]

Combining \cref{eqn:mm1_piPJ_t1,eqn:mm1_piPJ_t2} gives
\*[
  &\hspace{-1em}\sum_{k=0}^\infty \frac{e^{-\nu d} (\nu d)^k}{k!}
    \sta_+ P_+^k J_+  \\
  &= (1-\rho)\sum_{k=0}^\infty \left(\sum_{\ell=1}^k \ell x_\ell^{(k)} \frac{e^{-\nu d} (\nu d)^k}{k!} \right) +
    \sum_{k=0}^\infty \left(\rho r^k \left[\frac{1}{1-\rho} + k \right] \frac{e^{-\nu d} (\nu d)^k}{k!} \right) \\
  &=
    (1-\rho)\sum_{k=0}^\infty \left(\sum_{\ell=1}^k \ell x_\ell^{(k)} \frac{e^{-\nu d} (\nu d)^k}{k!} \right) +
    \frac{\rho e^{-\nu d + r \nu d}}{1-\rho} \sum_{k=0}^\infty \left(\frac{e^{-r \nu d} (r \nu d)^k}{k!} \right) \nonumber \\
    &\qquad +
    \rho e^{-\nu d + r \nu d} \sum_{k=0}^\infty k \left(\frac{e^{-r \nu d} (r \nu d)^k}{k!} \right) \\
  &=
    (1-\rho)\sum_{k=0}^\infty \frac{e^{-\nu d} (\nu d)^k}{k!} \left(\sum_{\ell=1}^k \ell x_\ell^{(k)}  \right) +
    \rho e^{-\nu d + r \nu d} \left(\frac{1}{1-\rho} + r \nu d \right),
\]
where the last line follows from the fact that $\frac{e^{-r\nu d} (r\nu d)^k}{k!}$ is a Poisson($r\nu d$) probability mass function. Then, it remains to observe that the $x^{(k)}_\ell$'s indeed satisfy the recursive formula and that from \citet{kleinrock76vol2}, 
\*[
  \EE \waitN_2 = \frac{\rho}{\mu (1-\rho_1)(1-\rho)}.
\]
Finally, apply the conservation law to get the \classname{}-1 expected waiting time.
\end{proof}

\begin{proof}[Proof of \cref{thm:md1_avg}]
Using the same logic as the proof of \cref{thm:mm1_avg} applied to Corollary~3.2 of \citet{mojalal19dapq},
\[
  &\EE \left[\waitN_2 - \waitD_2 \right] \\
  &= \sum_{i=1}^\infty \sta_i \sum_{j=1}^\infty \PP \left[\num_d = j, \num_t > 0 \ \forall \ t \in [0,d) \big\lvert \num_0 = i \right] \\ 
  &\qquad \times\left[\frac{\dee}{\dee s} \accDsresc{j}(s) \bigg\rvert_{s=0} - \frac{\dee}{\dee s} \accNsresc{j}(s) \bigg\rvert_{s=0} + (j-1) \left(\frac{\dee}{\dee s} \accDs(s) \bigg\rvert_{s=0} - \frac{\dee}{\dee s} \accNs(s) \bigg\rvert_{s=0} \right) \right].\label{eq:md1_avg_full}
\]

Recall that the LST of deterministic service is $\lst(s) = e^{-s/\mu}$. Thus, 
\*[
	\accDs(s) = \exp\left\{-[s+\lamA_1(1-\accDs(s))]/\mu \right\},
\]
so
\*[
  - \frac{\dee}{\dee s} \accDs(s) \bigg\rvert_{s=0} &= \exp\left\{-[\lamA_1(1-\accDs(0))]/\mu \right\} \frac{1}{\mu} \left[1 - \lamA_1 \frac{\dee}{\dee s} \accDs(s) \bigg\rvert_{s=0} \right], \\
  - \frac{\dee}{\dee s} \accDs(s) \bigg\rvert_{s=0} &= \frac{1}{\mu(1 - \rhoA_1)}.
\]

Similarly,
\*[
  - \frac{\dee}{\dee s} \accNs(s) \bigg\rvert_{s=0} &= \frac{1}{\mu(1 - \rho_1)}.
\]

Then, we have the following intermediary terms to assist in computing the residual accreditation interval. Recall that $d \in \N$. Let $\num_{{\resid{}}^-}$ be the number of customers in system immediately before the first service completion after arrival. For $k \in \N$ and $r > 0$, define
\*[
  \numresid{k}{r}
  = \PP \left(\num_{{\resid{}}^-} = k, \num_0 > 0 \lvert {\resid{}} = r \right)
  = \sum_{n=0}^{k-1} \sta_{k-n} e^{-\lam_1 r} \frac{(\lam_1 r)^n}{n!}.
\]
Also, for $i \in \N$ and $r > 0$, define
\*[
  \numarriv{i}{r}
  = \PP \left(i \text{ arrivals in } ({\resid{}}, d) \lvert {\resid{}} = r \right)
  = e^{-\lam_1(d-r)} \frac{(\lam_1(d-r))^{i}}{i!}.
\]

Finally, for $k \leq m \leq \ell$, $j \in \Z_+$, and $r > 0$, define
\*[
  \probempty{m}{k}{r}{j} 
  &= \PP \left(\num_{{\resid{}} + (m - 1)/\mu} = 0, \num_t > 0 \ \forall \ t \in [0,{\resid{}} + (m - 1)/\mu), \num_d = j \lvert \num_{\resid{}^-} = k, {\resid{}} = r \right) \\
  &= \left[e^{-\rho_1(m-1)} \frac{(\rho_1(m-1))^{m-k}}{(m-k)!}\left(\frac{k-1}{m-1} \right) e^{-\rho_1(\ell-(m-1))} e^{\lam_1 r}\right] \\
  &\qquad \times\left[\frac{\left(\rho_1(\ell-(m-1)) - \lam_1 r \right)^{j+\ell-m}}{(j+\ell-m)!}\right] \\
  &= e^{-\lam_1(d-r)} \frac{(\rho_1(m-1))^{m-k}}{(m-k)!}\left(\frac{k-1}{m-1} \right) \frac{\left(\rho_1(\ell-(m-1)) - \lam_1 r \right)^{j+\ell-m}}{(j+\ell-m)!}.
\]
Then, we define
\*[
  {\probempty{}{k}{r}{j}}
  =\PP \left(\exists m \in [d] \text{ s.t. } \num_{{\resid{}} + (m-1)/\mu} = 0, \num_d = j, \num_0 > 0 \lvert \num_{{\resid{}}^-} = k, {\resid{}} = r \right) 
  = \sum_{m=k}^d {\probempty{m}{k}{r}{j}}.   
\]

Recall that since the service length is always $1/\mu$, the unconditional residual service time is $\resid{} \sim \text{Unif}(0,1/\mu)$. 
For each $j \in \N$, let $\resid{j} = \resid{} \, \1\{\num_d = j, \num_t > 0 \ \forall \ t\in[0,d) \}$, with CDF $\cdfres{j}$ and LST $\lstres{j}$.
Consider the case where $d > 0$. Letting $\den(x)$ denote $\frac{\dee}{\dee x} \cdf(x)$ for any $\cdf$,
\*[
 \denres{j}(r) 
 &= \sum_{k=2}^\ell \numresid{k}{r} \left[\numarriv{j+\ell-k}{r} - {\probempty{}{k}{r}{j}} \right] + \sum_{k=\ell+1}^{j+\ell} \numresid{k}{r} \numarriv{j+\ell-k}{r} \\
 &= \sum_{k=2}^{j+\ell} \numresid{k}{r} \numarriv{j+\ell-k}{r} - \sum_{k=2}^\ell \numresid{k}{r} {\probempty{}{k}{r}{j}}.
\]

Next,
\*[
  \lstresc{j}(s) 
  = \int_0^{1/\mu} e^{-sr} \denresc{j}(r) \dee r
  = \int_0^{1/\mu} e^{-sr} \frac{\denres{j}(r)}{\PP(\num_d = j, \num_t > 0 \ \forall \ t \in [0,d))} \dee r,
\]
so
\*[
  - \frac{\dee}{\dee s} \lstresc{j}(s) \bigg\rvert_{s=0}
  = \frac{1}{\PP(\num_d = j, \num_t > 0 \ \forall \ t \in [0,d))} \int_0^{1/\mu} r \denres{j}(r) \dee r.
\]

Thus,
\*[
  -\frac{\dee}{\dee s} \accDsresc{j}(s) \bigg\rvert_{s=0}
  &= \left[-\frac{\dee}{\dee s} \lstresc{j}(s+\lamA_1(1-\accDs(s))) \bigg\rvert_{s=0} \right] \frac{1}{\mu} \left[1-\lamA_1 \frac{\dee}{\dee s} \accDs(s) \bigg\rvert_{s=0}\right] \\
  &= \left[\frac{1}{\PP(\num_d = j, \num_t > 0 \ \forall \ t \in [0,d))} \int_0^{1/\mu} r \denres{j}(r) \dee r\right] \frac{1}{\mu} \left[1 + \frac{\rhoA_1}{1 - \rhoA_1} \right].
\]

Similarly,
\*[
  -\frac{\dee}{\dee s} \accNsresc{j}(s) \bigg\rvert_{s=0}
  = \left[\frac{1}{\PP(\num_d = j, \num_t > 0 \ \forall \ t \in [0,d))} \int_0^{1/\mu} r \denres{j}(r) \dee r\right] \frac{1}{\mu} \left[1 + \frac{\rho_1}{1 - \rho_1} \right]
\]

That is,
\*[
  &\hspace{-2em}\frac{\dee}{\dee s} \accDsresc{j}(s) \bigg\rvert_{s=0} - \frac{\dee}{\dee s} \accNsresc{j}(s) \bigg\rvert_{s=0} \\
  &= \frac{1}{\mu} \left[\frac{\rho_1}{1 - \rho_1} - \frac{\rhoA_1}{1 - \rhoA_1} \right] \left[\frac{1}{\PP(\num_d = j, \num_t > 0 \ \forall \ t \in [0,d))} \int_0^{1/\mu} r \denres{j}(r) \dee r\right] \\
  &= \left[\frac{\rho_1 b}{\mu(1 - \rhoA_1)(1-\rho_1)}\right] \left[\frac{1}{\PP(\num_d = j, \num_t > 0 \ \forall \ t \in [0,d))} \int_0^{1/\mu} r \denres{j}(r) \dee r\right].
\]

Thus, plugging this into \cref{eq:md1_avg_full} gives
\*[
  &\hspace{-2em}\EE \left[\waitN_2 - \waitD_2 \right] \\
  &= \left[\frac{\rho_1 b}{\mu(1 - \rhoA_1)(1-\rho_1)}\right]  \sum_{i=1}^\infty \sta_i \sum_{j=1}^\infty \PP \left[\num_d = j, \num_t > 0 \ \forall \ t \in [0,d) \big\lvert \num_0 = i \right] \\
  &\qquad \times \left[\frac{1}{\PP(\num_d = j, \num_t > 0 \ \forall \ t \in [0,d))} \int_0^{1/\mu} r \denres{j}(r) \dee r + (j-1) \right] \\
  &= \left[\frac{\rho_1 b}{\mu(1 - \rhoA_1)(1-\rho_1)}\right]  \sum_{j=1}^\infty \PP \left[\num_d = j, \num_t > 0 \ \forall \ t \in [0,d) \right] \\
  &\qquad \times\left[\frac{1}{\PP(\num_d = j, \num_t > 0 \ \forall \ t \in [0,d))} \int_0^{1/\mu} r \denres{j}(r) \dee r + (j-1) \right] \\
  &= \left[\frac{\rho_1 b}{\mu(1 - \rhoA_1)(1-\rho_1)}\right]  \sum_{j=1}^\infty \int_0^{1/\mu} r \denres{j}(r) \dee r + \left[\frac{\rho_1 b}{\mu(1 - \rhoA_1)(1-\rho_1)}\right]  \sum_{j=1}^\infty (j-1) \int_0^{1/\mu} \denres{j}(r)\dee r.
\]

It remains to compute these integrals. To do so, observe the following lemma:
\begin{lemma}\label{lem:int_helper}
\*[
  \int_0^b x^n (c-x)^m dx = b^{n+1} \sum_{a=0}^m \frac{1}{n+a+1} {m \choose a} c^{m-a} (-b)^a.
\]
\end{lemma}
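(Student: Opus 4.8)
The plan is to evaluate the integral directly by expanding the factor $(c-x)^m$ with the binomial theorem and integrating the resulting monomials term by term. This is a routine computation with no conceptual difficulty; the only care needed is in the bookkeeping of signs and powers of $b$.

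First, I would apply the binomial theorem to the second factor,
\*[
  (c-x)^m = \sum_{a=0}^m {m \choose a} c^{m-a} (-x)^a = \sum_{a=0}^m (-1)^a {m \choose a} c^{m-a} x^a,
\]
so that the integrand $x^n (c-x)^m$ becomes a finite linear combination of the monomials $x^{n+a}$ for $a \in \{0,\dots,m\}$, with coefficients $(-1)^a {m \choose a} c^{m-a}$. Because this is a \emph{finite} sum, interchanging summation and integration is immediate and requires no justification beyond linearity of the integral.

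Second, I would integrate each monomial over $[0,b]$ using $\int_0^b x^{n+a}\, \dee x = b^{n+a+1}/(n+a+1)$, which is valid since $n+a+1 \geq 1 > 0$ so there is no singularity at the origin. This yields
\*[
  \int_0^b x^n (c-x)^m\, \dee x = \sum_{a=0}^m \frac{(-1)^a}{n+a+1} {m \choose a} c^{m-a} b^{n+a+1}.
\]

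Third, the final step is purely algebraic: I would factor $b^{n+a+1} = b^{n+1} b^a$, pull the common factor $b^{n+1}$ out of the sum, and combine the remaining sign and power via $(-1)^a b^a = (-b)^a$, which reproduces the claimed right-hand side $b^{n+1} \sum_{a=0}^m \frac{1}{n+a+1} {m \choose a} c^{m-a} (-b)^a$ exactly. The main (and only) thing to watch is this regrouping of the sign into the power of $-b$; beyond that the argument is a direct term-by-term evaluation.
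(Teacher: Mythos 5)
Your proposal is correct and follows exactly the same route as the paper's own proof: binomial expansion of $(c-x)^m$, term-by-term integration of the monomials $x^{n+a}$ over $[0,b]$, and the final regrouping of $(-1)^a b^a$ into $(-b)^a$ after factoring out $b^{n+1}$. Nothing is missing, and no further comparison is needed.
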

\begin{proof}[Proof of \cref{lem:int_helper}]
\*[
  \int_0^b x^n (c - x)^m dx
    &= \int_0^b x^n \sum_{a=0}^m {m \choose a} c^{m-a} (-x)^a dx \\
    &= \sum_{a=0}^m (-1)^a {m \choose a} c^{m-a} \int_0^b x^{n+a} dx \\
    &= \sum_{a=0}^m (-1)^a {m \choose a} c^{m-a} \frac{b^{n+a+1}}{n + a +1} \\
    &= b^{n+1} \sum_{a=0}^m \frac{1}{n+a+1} {m \choose a} c^{m-a} (-b)^a.
\]
\end{proof}

Then,
\*[
  &\hspace{-2em}\int_0^{1/\mu} \sum_{k=2}^{j+\ell} \numresid{k}{r} \numarriv{j+\ell-k}{r} \dee r \\
  &= \sum_{k=2}^{j+\ell} \frac{e^{-\lambda_1 d}}{(j+\ell-k)!} \sum_{n=0}^{k-1} \frac{\sta_{k-n}}{n!} \lambda_1^{j+\ell-k+n} \int_0^{1/\mu} r^n (d-r)^{j+\ell-k} \dee r \\
  &= \sum_{k=2}^{j+\ell} \frac{e^{-\lambda_1 d}}{(j+\ell-k)!} \sum_{n=0}^{k-1} \frac{\sta_{k-n}}{n!} \lambda_1^{j+\ell-k+n} (1/\mu)^{n+1} \sum_{a=0}^{j+\ell-k} \frac{1}{n+a+1} {{j+\ell-k} \choose a} d^{j+\ell-k-a}(-1)^a \\
  &= e^{-\lambda_1 d} \sum_{k=2}^{j+\ell} \sum_{a=0}^{j+\ell-k} 
    \frac{(-1)^a d^{j+\ell-k-a}}{(j+\ell-k-a)! a!}
    \sum_{n=0}^{k-1} \frac{\sta_{k-n} \lambda_1^{j+\ell+n-k}}{\mu^{n+1}(n+a+1) n!}.
\]

Next,
\*[
  &\hspace{-2em}\int_0^{1/\mu} \sum_{k=2}^\ell \numresid{k}{r} {\probempty{}{k}{r}{j}} \dee r \\
  &= \sum_{k=2}^\ell \sum_{n=0}^{k-1} \sta_{k-n} \\ 
  &\qquad \times \int_0^{1/\mu} e^{-\lambda_1 r} \frac{(\lam_1 r)^n}{n!}
      \sum_{m=k}^\ell e^{-\lam_1(d-r)} \frac{(\rho_1(m-1))^{m-k}}{(m-k)!}\left(\frac{k-1}{m-1} \right) \frac{\left(\rho_1(\ell-(m-1)) - \lam_1 r \right)^{j+\ell-m}}{(j+\ell-m)!} \dee r \\
  &= \sum_{k=2}^\ell e^{-\lam_1 d} \sum_{m=k}^\ell \frac{(m-1)^{m-k}}{(m-k)!(j+\ell-m)!} 
      \left(\frac{k-1}{m-1} \right)
      \sum_{n=0}^{k-1} \frac{\sta_{k-n} \lam_1^{j+\ell+n-k}}{n!}\\
   &\qquad \times
      \int_0^{1/\mu} r^n (d-(m-1)/\mu - r)^{j+\ell-m} \dee r \\
  &= e^{-\lam_1 d} \sum_{k=2}^\ell \sum_{m=k}^\ell 
    \frac{(m-1)^{m-k}}{(m-k)!} 
    \left(\frac{k-1}{m-1} \right)
    \sum_{n=0}^{k-1} \frac{\sta_{k-n} \lam_1^{j+\ell+n-k}}{n!} \\
   &\qquad \times
    \sum_{a=0}^{j+\ell-m} \frac{(-1)^a (\ell+1-m)^{j+\ell-m-a}}{\mu^{j+\ell-m-a}(n+a+1) (j+\ell-m-a)! a!} \\
  &= e^{-\lambda_1 d} \sum_{k=2}^\ell \sum_{m=k}^\ell
    \frac{(m-1)^{m-k}}{(m-k)!} 
    \left(\frac{k-1}{m-1} \right)
    \sum_{a=0}^{j+\ell-m} \frac{(-1)^a (\ell+1-m)^{j+\ell-m-a}}{\mu^{j+\ell-m-a}(j+\ell-m-a)! a!}
    \sum_{n=0}^{k-1} \frac{\sta_{k-n} \lambda_1^{j+\ell+n-k}}{(n+a+1) n!}.
\]

Thus,
\*[
  &\hspace{-2em}\int_0^{1/\mu} \denres{j}(r)\dee r \\
  &= e^{-\lambda_1 d} \sum_{k=2}^{j+\ell} \sum_{a=0}^{j+\ell-k} 
    \frac{(-1)^a d^{j+\ell-k-a}}{(j+\ell-k-a)! a!}
    \sum_{n=0}^{k-1} \frac{\sta_{k-n} \lambda_1^{j+\ell+n-k}}{\mu^{n+1}(n+a+1) n!} \\
  &\qquad - \Bigg[e^{-\lambda_1 d} \sum_{k=2}^\ell \sum_{m=k}^\ell
    \frac{(m-1)^{m-k}}{(m-k)!} 
    \left(\frac{k-1}{m-1} \right)\\
  &\qquad\qquad\times
    \sum_{a=0}^{j+\ell-m} \frac{(-1)^a (\ell+1-m)^{j+\ell-m-a}}{\mu^{j+\ell-m-a}(j+\ell-m-a)! a!}
    \sum_{n=0}^{k-1} \frac{\sta_{k-n} \lambda_1^{j+\ell+n-k}}{(n+a+1) n!}\Bigg].
\]

The exact same calculations with an additional $r$ inside the integral gives
\*[
  &\hspace{-2em}\int_0^{1/\mu} r \denres{j}(r)\dee r \\
  &= e^{-\lambda_1 d} \sum_{k=2}^{j+\ell} \sum_{a=0}^{j+\ell-k} 
    \frac{(-1)^a d^{j+\ell-k-a}}{(j+\ell-k-a)! a!}
    \sum_{n=0}^{k-1} \frac{\sta_{k-n} \lambda_1^{j+\ell+n-k}}{\mu^{n+1}(n+a+2) n!}  \\
  &\qquad - \Bigg[e^{-\lambda_1 d} \sum_{k=2}^\ell \sum_{m=k}^\ell
    \frac{(m-1)^{m-k}}{(m-k)!} 
    \left(\frac{k-1}{m-1} \right)\\
   &\qquad\qquad\times
    \sum_{a=0}^{j+\ell-m} \frac{(-1)^a (\ell+1-m)^{j+\ell-m-a}}{\mu^{j+\ell-m-a}(j+\ell-m-a)! a!}
    \sum_{n=0}^{k-1} \frac{\sta_{k-n} \lambda_1^{j+\ell+n-k}}{(n+a+2) n!}\Bigg].
\]

Plugging these last two results in along with the M/D/1 average waiting time from \citet{kleinrock76vol2} gives the first statement of the theorem.

Finally, consider the case where $d=0$. First, observe that
\*[
  &\EE \left[\waitN_2 - \waitD_2 \right] \\
  &= \sum_{i=1}^\infty \sta_i \sum_{j=1}^\infty \PP \left[\num_d = j, \num_t > 0 \ \forall \ t \in [0,d) \big\lvert \num_0 = i \right] \\ 
  &\qquad\times \left[\frac{\dee}{\dee s} \accDsresc{j}(s) \bigg\rvert_{s=0} - \frac{\dee}{\dee s} \accNsresc{j}(s) \bigg\rvert_{s=0} + (j-1) \left(\frac{\dee}{\dee s} \accDs(s) \bigg\rvert_{s=0} - \frac{\dee}{\dee s} \accNs(s) \bigg\rvert_{s=0} \right) \right] \\
  &= \sum_{j=1}^\infty \sta_j \left[\left(\frac{\lam_1 b}{(1 - \lamA_1)(1-\lam_1)}\right) \left(\frac{1}{\sta_j} \int_0^1 r \denres{j}(r) \dee r\right) + (j-1) \left(\frac{\lam_1 b}{(1 - \lamA_1)(1-\lam_1)}\right)\right] \\
  &= \left(\frac{\lam_1 b}{(1 - \lamA_1)(1-\lam_1)}\right) \left[\sum_{j=1}^\infty \int_0^1 r \denres{j}(r) \dee r + \sum_{j=1}^\infty (j-1) \sta_j \right].
\]

Next, we have the following result from \citet{adan09residual}:
\*[
  \int_0^1 r \denres{j}(r) \dee r = \frac{1-\rho}{\lam}\sum_{k=j+1}^\infty \sta_k.
\]

Thus, using the average queue length for an M/D/1 queue,
\*[
  \EE \left[\waitN_2 - \waitD_2 \right] 
  &= \left(\frac{\lam_1 b}{(1 - \lamA_1)(1-\lam_1)}\right) \left[\frac{1-\rho}{\lam} \sum_{j=1}^\infty \sum_{k=j+1}^\infty \pi_k + \frac{\rho^2}{2(1-\rho)} \right] \\
  &= \left(\frac{\lam_1 b}{(1 - \lamA_1)(1-\lam_1)}\right) \left[\frac{1-\rho}{\lam} \sum_{j=1}^\infty \PP(N_0 > j) + \frac{\rho^2}{2(1-\rho)} \right] \\
  &= \left(\frac{\lam_1 b}{(1 - \lamA_1)(1-\lam_1)}\right) \left[\frac{1-\rho}{\lam} (\EE N_0 - \rho) + \frac{\rho^2}{2(1-\rho)} \right] \\
  &= \frac{\lam_1 b}{(1 - \lamA_1)(1-\lam_1)} \frac{\rho}{2(1-\rho)}.
\] 

\end{proof}

\end{document}